\newtheorem{theorem}{Theorem}
\newtheorem{proposition}{Proposition}
\newtheorem{lemma}{Lemma}
\newtheorem{corollary}{Corollary}
\theoremstyle{definition}
\newtheorem{definition}{Definition}
\theoremstyle{remark}
\newtheorem{remark}{Remark}
\newtheorem{example}{Example}
\begin{document}

\newcommand{\A}{\mathbb{A}}
\newcommand{\B}{\mathbb{B}}
\newcommand{\s}{\mathbb{S}}
\newcommand{\T}{\mathbb{T}}
\newcommand{\G}{\mathbb{G}}
\newcommand{\bS}{\boldsymbol{S}}

\newcommand{\overlay}{\uplus}
\newcommand{\ar}[1]{\overrightarrow{#1}}
\newcommand{\du}{\oast}


\title[Link diagrams and their parallels]{On the Seifert graphs of a link diagram and its parallels}

\author[S.~Huggett]{Stephen Huggett$^*$}
\author[I.~Moffatt]{Iain Moffatt$^\dagger$}
\author[N.~Virdee]{Natalia Virdee$^*$}

\thanks{
${\hspace{-1ex}}^*$School of Computing and Mathematics, University of Plymouth, PL4 8AA, Devon, UK.;  \\
${\hspace{.35cm}}$ \texttt{s.huggett@plymouth.ac.uk}, 
${\hspace{.35cm}}$ \texttt{natalia.virdee@plymouth.ac.uk}}

\thanks{
${\hspace{-1ex}}^\dagger$
Department of Mathematics and Statistics,  University of South Alabama, Mobile, AL 36688, USA;\\
${\hspace{.35cm}}$ \texttt{imoffatt@jaguar1.usouthal.edu}}

\subjclass[2010]{Primary 57M15, Secondary 57M25, 05C10}

\keywords{Tait graph, knots and links, parallels of links, Seifert graph, ribbon graphs, partial duality, all-$A$ ribbon graph}

\date{}

\begin{abstract}
Recently, Dasbach, Futer, Kalfagianni, Lin, and Stoltzfus extended 
the notion of a Tait graph by associating a set of ribbon graphs (or 
equivalently, embedded graphs) to a link diagram. Here we focus on 
Seifert graphs, which are the ribbon graphs of a knot or link diagram that 
arise from Seifert states. We provide a characterization of Seifert 
graphs in terms of Eulerian subgraphs. This characterization can be 
viewed as a refinement of the fact that Seifert graphs are 
bipartite. We go on to examine the family of ribbon graphs that 
arises by forming the  parallels of a link diagram and determine how 
the genus of the ribbon graph of a $r$-fold parallel of a link 
diagram is related to that  of the original link diagram.
\end{abstract}

\maketitle


\section{Introduction}

There is a classical way to associate a (signed) plane graph, called 
the Tait graph, with the diagram of a link (see 
Subsection~\ref{ss.tg}). Tait graphs are a standard tool, and have 
found numerous applications in knot theory and graph theory. 
Recently, Dasbach,  Futer, Kalfagianni, Lin, and Stoltzfus, in 
\cite{Detal}, extended the idea of a Tait graph by associating a set 
of ribbon graphs (or equivalently, embedded graphs) with a link 
diagram.  The Tait graphs of a link diagram appear in this set of 
embedded graphs. One of the key advantages of Dasbach et al's idea of 
using non-plane graphs to describe knots is that it provides a way of 
encoding the crossing structure of a link diagram in the topology of 
the embedded graph (rather than by using signs on the edges). This 
idea is proving  to be very useful in knot theory and it has found 
many recent applications. These applications include applications to 
the Jones and HOMFLY-PT polynomials \cite{CP,CV,Ch1, Detal, Mo2, Mo3, 
VT10}; Khovanov homology \cite{CKS07, Detal}; knot Floer homology 
\cite{Lo08};    Turaev genus  \cite{Ab09,Lo08,Tu97}; 
Quasi-alternating links \cite{Wi09};  the coloured Jones polynomial 
\cite{FKP09}; the signature of a knot \cite{DL10}; the  determinant 
of a knot \cite{Detal, Detal2}; and hyperbolic knot theory 
\cite{FKP08}.

Here we are interested in the structure of the ribbon graphs of a 
link diagram, and of their underlying abstract graphs.  We are 
especially interested in the Seifert graphs of a link diagram which 
arise from the Seifert state of a link diagram. Seifert graphs are 
known to  be bipartite (see \cite{Cr04}, for example). However, as 
not all bipartite graphs arise from link diagrams, this does 
not, on its own, provide a characterization of the set of Seifert 
graphs of a link diagram. Here we provide a necessary and sufficient 
condition for a graph to be the Seifert graph of a link diagram. A 
well known result in graph theory states that a plane graph is 
bipartite if and only if its dual is Eulerian (see \cite{Bondy}, for 
example). Our characterization (in Theorem~\ref{t.sc}) of Seifert 
graphs will be stated in terms of the dual concept of Eulerian 
graphs.

We then go on to determine the operation on Tait graphs which 
corresponds to forming the $r$-fold parallel of a link diagram. We 
conclude by applying this result to finding the genus of the ribbon 
graph of a $r$-fold parallel of a link diagram in terms of the genus 
of the ribbon graph of the original link diagram.

\section{The graphs of knot and link diagrams}
In this section we provide an overview of the graphs and embedded 
graphs of a link diagram and we will describe their relation to each 
other. We will assume a familiarity with basic knot theory and graph 
theory.

\subsection{Tait graphs}\label{ss.tg}

Throughout this paper we will always assume that link diagrams 
consist of one component, that is, if $D\subset S^2$ is a link 
diagram, then each component of $S^2-D$ is a disc. The components of 
$S^2-D$ are called the {\em regions} of $D$. Since we prefer to work 
with graphs embedded in $S^2$ rather than $\mathbb{R}^2$, we will 
slightly abuse the language and refer to a graph (cellularly) embedded in 
$S^2$ as a {\em plane} graph.

Let $D\subset S^2$ be a link diagram. A {\em checkerboard colouring} 
of $D$ is an assignment of the colour black or white to each region 
of $D$ in such a way that adjacent regions have different colours. 
The {\em Tait sign} of a crossing in a checkerboard coloured link 
diagram is an element of $\{+,-\}$ which is assigned to the crossing 
according to the following scheme:
\[ \begin{array}{ccc}
\raisebox{1mm}{\includegraphics[height=1.2cm]{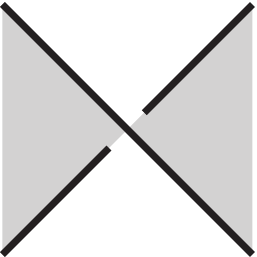}} & \quad 
\raisebox{6mm}{\text{or}} \quad& 
\raisebox{1mm}{\includegraphics[height=1.2cm]{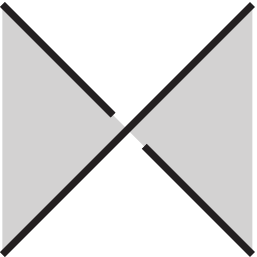}} \;.\\
$+$ & & $-$ 
\end{array}\]

A {\em Tait  graph} $\T(D)$ is a signed plane graph constructed from 
$D$ as follows: checkerboard colour the link diagram, place a vertex 
in each black region and add an edge between two vertices whenever 
the corresponding regions of $D$ meet at a crossing. Finally, weight 
each edge of the graph with the Tait sign of the corresponding 
crossing.

 
\begin{example} This example illustrates the construction of a Tait graph $\T(D)$ of a link diagram $D$.
\[\includegraphics[width=3cm]{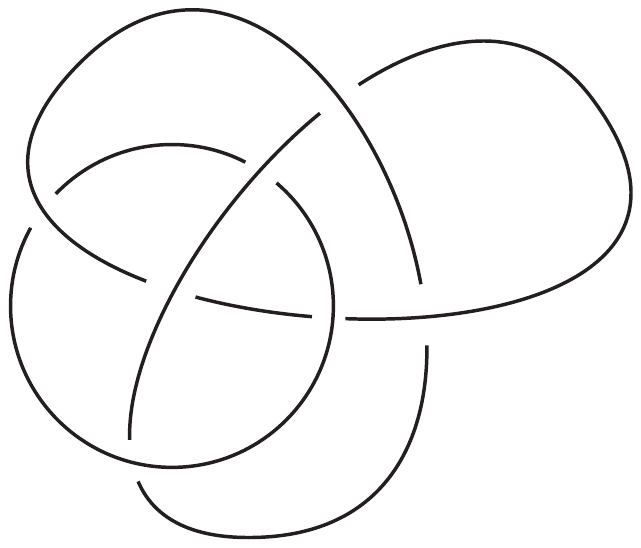} \raisebox{1cm}{\includegraphics[width=1cm]{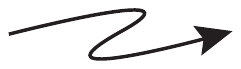}}
\includegraphics[width=3cm]{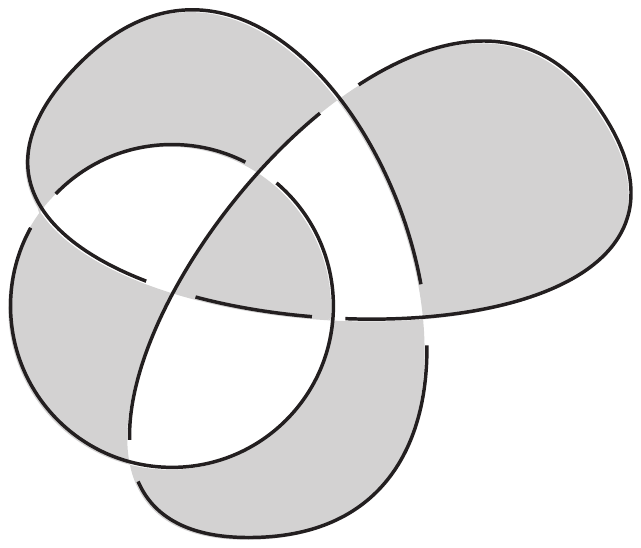}\raisebox{1cm}{\includegraphics[width=1cm]{arrow}}
\includegraphics[width=3cm]{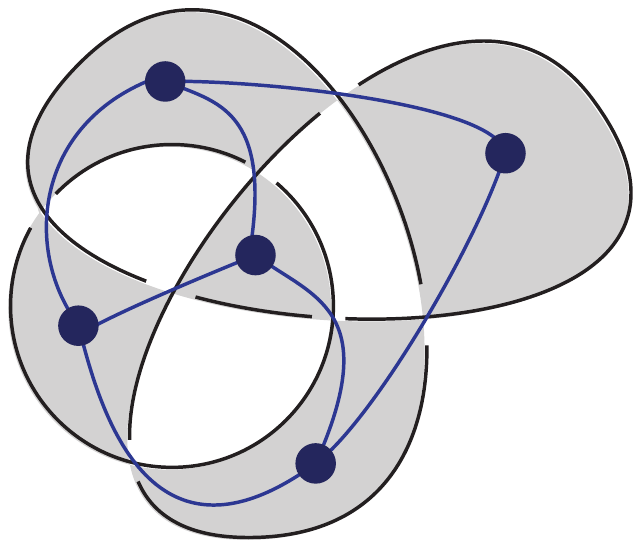}
\raisebox{1cm}{\includegraphics[width=1cm]{arrow}}
\includegraphics[width=3cm]{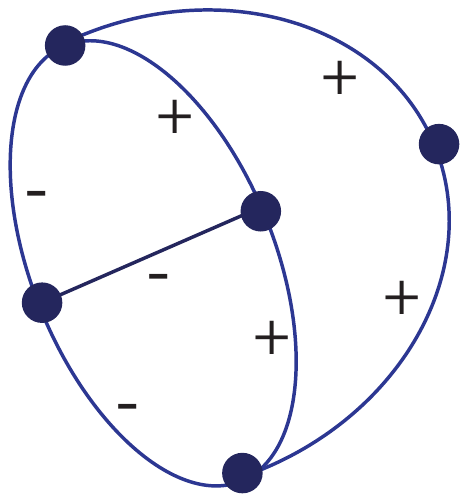}\]
\end{example}

Since there are two possible checkerboard colourings of  $D$, every 
diagram $D$ has exactly two  associated  Tait graphs. It is well 
known and easily seen that the two Tait graphs associated with a link 
diagram are duals of each other (we refer a  reader who is unfamiliar 
with duality  forward to Subsection~\ref{ss.dual} for its 
definition). Here it should be noted that duality acts on the edge 
weights by switching the sign. That is, if $e$ is an edge in a signed 
embedded graph $G$  with sign $m_e$, then the edge $e^*$ in $G^*$ 
that corresponds to $e$ will have sign $-m_e$.

\medskip

In addition to the Tait sign, we will also need to make use of the oriented sign of a crossing.  

Let $D$ be an oriented link diagram. The {\em oriented sign}  of a crossing of $D$ is an element of $\{+,-\}$ which is assigned to the crossing according to the following scheme:
\[ \begin{array}{ccc}
\raisebox{1mm}{\includegraphics[height=1.2cm]{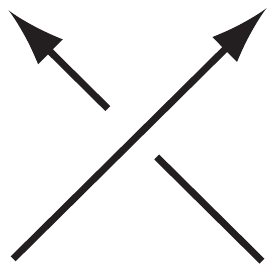}} & \quad& \raisebox{1mm}{\includegraphics[height=1.2cm]{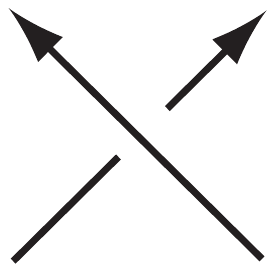}} \\
+ & & -
\end{array}.\]

An {\em bi-weighted Tait graph}, $\T_{\sigma}(D)$, of an oriented link 
diagram $D$ is a plane graph with edge weights in $\{+,-\}\times 
\{+,-\}$. The bi-weighted Tait graph is formed in the same way as the 
Tait graph except that the weight $(m_e,\sigma_e)$ is assigned to 
each edge $e$, where $m_e$ is the Tait sign of the crossing 
corresponding to $e$ and $\sigma_e$ is its oriented sign. 

Just as with Tait graphs, each link diagram gives rise to two 
bi-weighted Tait graphs. Moreover,  with an appropriate action of 
duality on the edge weights, these two bi-weighted Tait graphs are 
duals. If $e$ is an edge of weight  $(m_e, \sigma_e)$ in an 
embedded graph $G$, then the edge $e^*$ in $G^*$ that corresponds to 
$e$ will have sign $(-m_e, \sigma_e)$. With this action of duality on 
the edge weights it follows that if  $G$ and $H$ are the two 
bi-weighted Tait graphs associated with an oriented link diagram then 
$G^*=H$.

\subsection{Ribbon graphs and their representations}
An {\em embedded  graph} $G=(V(G),E(G)) \subset \Sigma$ is a graph 
drawn on a surface $\Sigma$  in such a way that edges only intersect 
at their ends. The  arcwise-connected components of $\Sigma 
\backslash G$ are called the {\em regions} of $G$. If each of the 
regions of an embedded graph $G$ is homeomorphic to a disc we say 
that $G$ is a {\em cellularly embedded graph}, and its regions are 
called {\em faces}. A {\em plane graph} is a graph that is cellularly 
embedded in the sphere.

Two embedded graphs, $G\subset \Sigma$ and  $G'\subset \Sigma'$  are 
said to be {\em equal}   if there is a homeomorphism from  $\Sigma$ 
to $\Sigma'$ that sends $G$ to $G'$. As is standard, we will often 
abuse notation and identify an embedded graph with its equivalence 
class under equality.

Tait graphs are edge-weighted graphs cellularly embedded in a 
sphere.  In this paper we are interested in higher genus analogues of 
Tait graphs. These are edge-weighted embedded graphs that arise from  
link diagrams. It will be particularly convenient to use the language 
of ribbon graphs to describe these cellularly embedded graphs.

\begin{definition}
A {\em ribbon graph} $G =\left(  V(G),E(G)  \right)$ is a (possibly non-orientable) surface with boundary represented as the union of two  sets of topological discs, a set $V (G)$ of {\em vertices}, and a set of {\em edges} $E (G)$ such that: 
\begin{enumerate}
\item the vertices and edges intersect in disjoint line segments;
\item each such line segment lies on the boundary of precisely one
vertex and precisely one edge;
\item every edge contains exactly two such line segments.
\end{enumerate}
\end{definition}

Ribbon graphs are considered up to  homeomorphisms of the surface that preserve the vertex-edge structure.  

A ribbon graph is said to be {\em orientable} if it is orientable as a surface. Here we will only consider orientable ribbon graphs. The {\em genus}, $g(G)$, of a ribbon graph is the genus of $G$ as a surface. In addition we let $p(G)$ denote the number of boundary components of $G$, $k(G)$ the number of its connected components, $e(G):=|E(G)|$ and $v(G):=|V(G)|$. 

Ribbon graphs are easily seen to be equivalent to cellularly embedded 
graphs. Intuitively, if $G$ is a cellularly embedded graph, a ribbon 
graph representation results from taking a small neighbourhood  of 
the cellularly embedded graph $G$. On the other hand, if $G$ is a 
ribbon graph, we simply sew discs into each boundary component of the 
ribbon graph to get the desired surface.

\begin{figure}
\begin{tabular}{ccccc}
\includegraphics[width=40mm]{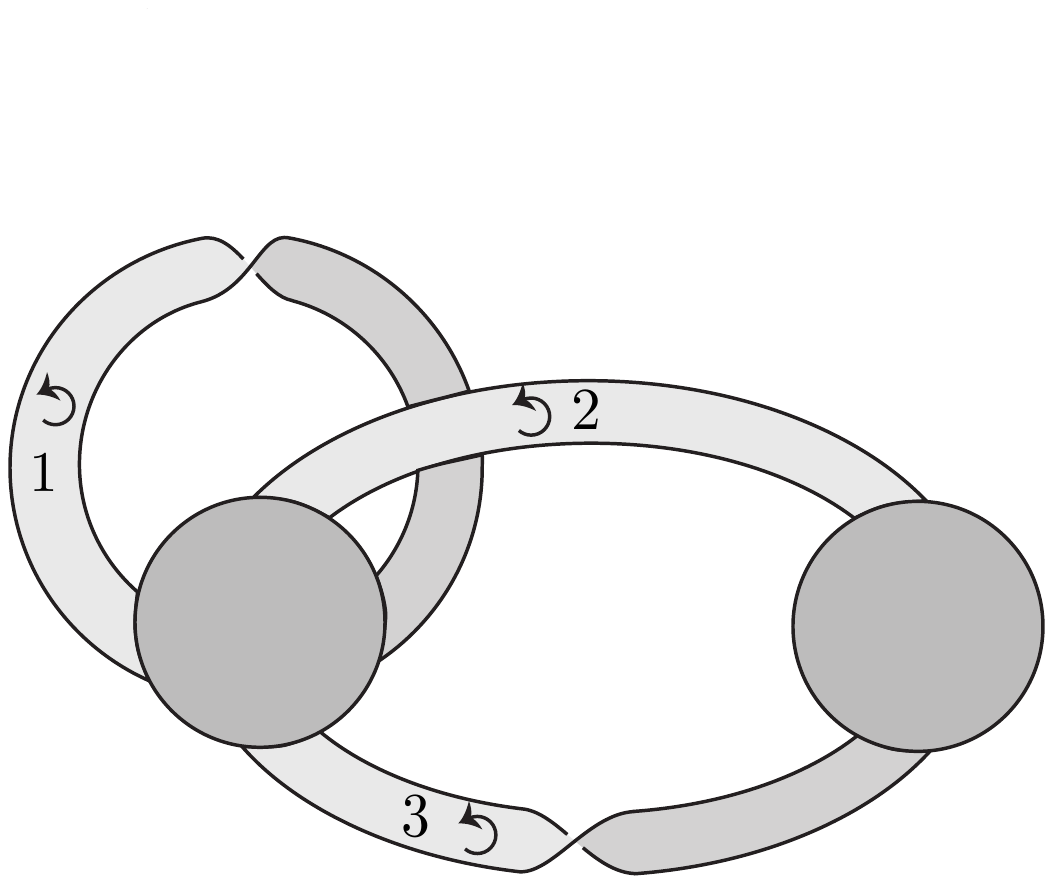} &\raisebox{9mm}{ = }& \includegraphics[width=40mm]{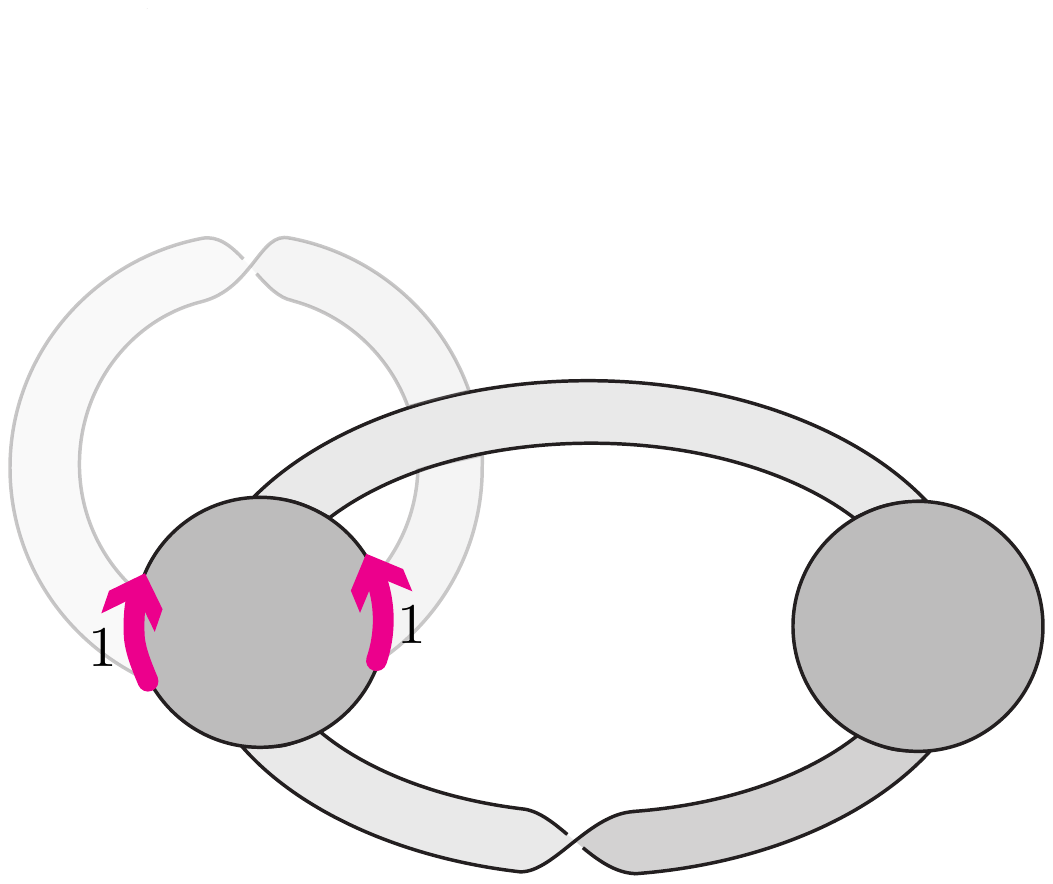} &\raisebox{9mm}{ = }& \includegraphics[width=40mm]{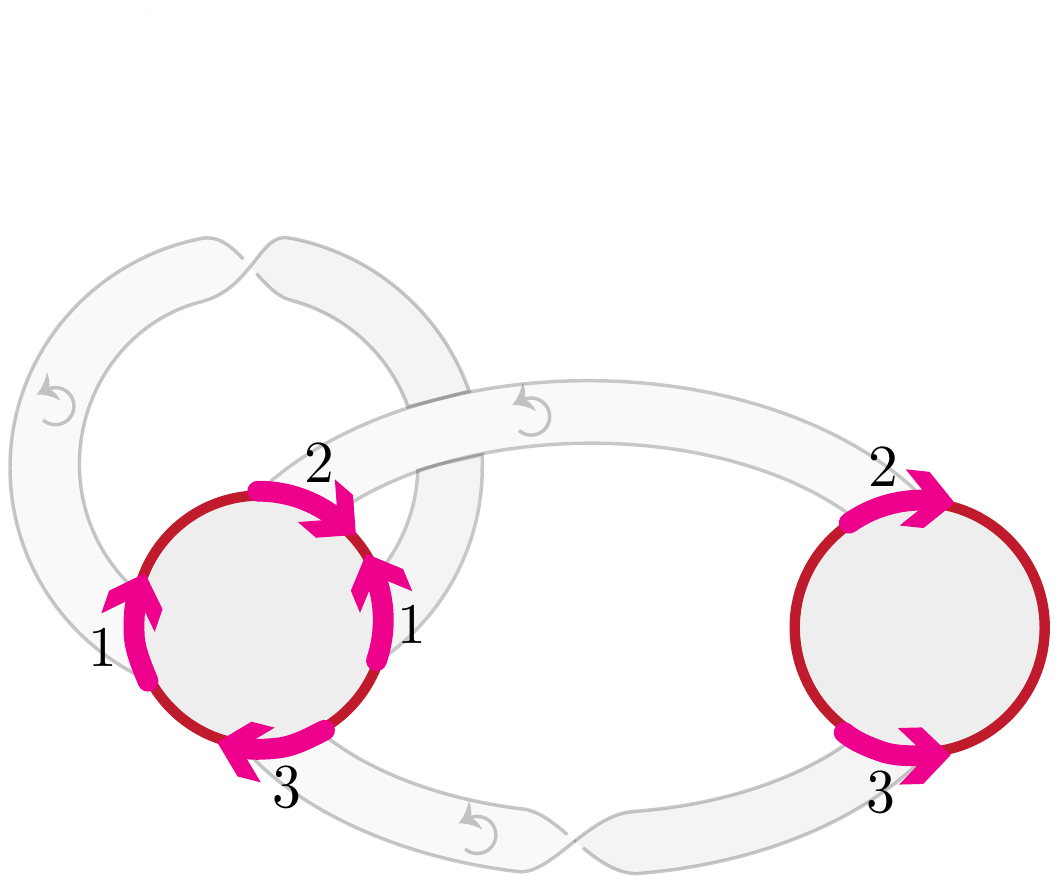} \\
(i)&&(ii)& &(iii)  
\end{tabular}
\caption{Realizations of a ribbon graph}
\label{fig.rgex}
\end{figure}

\bigskip

A {\em spanning ribbon subgraph} of $G$ is a ribbon graph $H$ which 
can be obtained from $G$ by deleting some edges. We will often use coloured arrows on the boundary of $H$ to record 
where these deleted edges were.
\begin{definition}
An {\em arrow-marked ribbon graph} $\ar{G}$ consists of a ribbon graph $G$ equipped with a collection of  coloured arrows, called {\em marking arrows}, on the boundaries of its vertices. The marking arrows are such that no marking arrow meets an edge of the ribbon graph, and    there are exactly two marking arrows of each colour.
\end{definition}

Two arrow-marked ribbon graphs are considered to be equivalent if one can be obtained from the other by reversing the direction of all of the marking arrows which belong to some subset of colours.

A ribbon graph can be obtained from an arrow-marked ribbon graph by adding edges in a way prescribed by the marking arrows, as follows: take a disc (this disc will form the new edge) and orient its boundary arbitrarily. Add this disc to the ribbon graph by choosing two non-intersecting arcs on the boundary of the disc and two marking arrows of the same colour, and then identifying the arcs with the marking arrows according to the orientation of the arrow. The disc that has been added forms an edge of a new ribbon graph. 
This process is illustrated in the diagram below, and an example of an arrow-marked ribbon graph and the ribbon graph it describes  is given in Figures~\ref{fig.rgex}(i) and (ii).
\[\includegraphics[height=12mm]{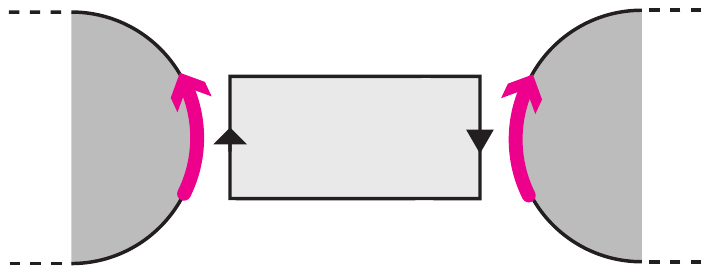} 
\raisebox{4mm}{\hspace{3mm}\includegraphics[width=12mm]{arrow}\hspace{3mm}}
  \includegraphics[height=12mm]{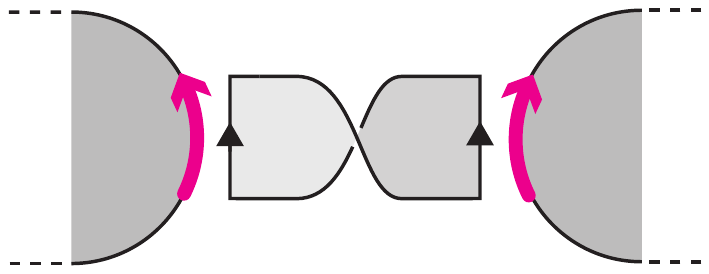} 
  \raisebox{4mm}{\hspace{3mm}\includegraphics[width=12mm]{arrow}\hspace{3mm}} \includegraphics[height=12mm]{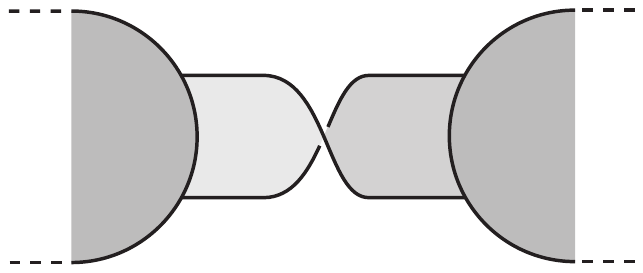} \]

The above construction shows that an arrow-marked ribbon graph 
describes a ribbon graph. Conversely, every ribbon graph can be 
described as an arrow-marked spanning ribbon subgraph. To see why 
this is, suppose that $G$ is a ribbon graph and $B \subseteq E(G)$. 
To describe $G$ as an arrow-marked ribbon graph $\ar{G- B}$, start by 
arbitrarily orienting each edge in $B$. This induces an orientation 
on the boundary of each edge in $B$. For each $e\in B$, place an 
arrow on each of the two arcs where $e$ meets vertices of $G$, the 
directions of these arrows following the orientation of the boundary 
of $e$. Colour the two arrows with $e$, and delete the edge $e$. This 
gives an arrow-marked ribbon graph $\ar{G- B}$. Moreover, the original 
ribbon graph $G$ can be recovered from $\ar{G- B}$ by adding 
edges to $\ar{G- B}$ as prescribed by the marking arrows. 

Note that if $G$ is a ribbon graph and $F$ is any spanning ribbon 
subgraph, then there is an arrow-marked ribbon graph  $\ar{F}$ which 
describes $G$.

\bigskip

Every  ribbon graph $G$ has a representation as an arrow-marked ribbon graph $\ar{V(G)}$, where the spanning ribbon subgraph consists of the vertex set  of $G$.   In such cases, to describe $G$ it is enough to record only the marked boundary cycles of the vertex set (to recover the vertex set, just place each cycle on the boundary of a disc).  Thus a ribbon graph can be presented as a set of cycles with marking arrows on them. In such a structure, there are exactly two marking arrows of each colour. 
Such a structure is called an {\em arrow presentation}. A ribbon graph can be recovered from an arrow presentation by regarding the marked cycles as boundaries of discs, giving an arrow-marked ribbon graph. To describe this more formally:
\begin{definition}
An  {\em arrow presentation} of a ribbon graph consists of a set of oriented (topological) circles (called {\em cycles}) that are marked with coloured arrows, called {\em marking arrows}, such that there are exactly two marking arrows of each colour.   
\end{definition} 
An example of a ribbon graph with its arrow presentation is given in 
Figure~\ref{fig.rgex}(i) and (iii).

Two arrow presentations are considered equivalent if one can be 
obtained from the other by reversing pairs of marking arrows of the 
same colour.

If weights are associated to the arrows, then an arrow 
presentation describes an edge-weighted ribbon graph.

\subsection{The ribbon graphs of a link diagram}\label{ss.rgld}
In \cite{Detal}, Dasbach et. al. extended the concept of a Tait graph 
by describing how a set of ribbon graphs can be associated to a link 
diagram. The ribbon graphs of a link diagram have proved to be very  
useful in knot theory. Here we are interested in certain special 
members of this set of ribbon graphs. In addition to the Tait graphs 
of a link diagram, we are particularly interested in the all-$A$, 
all-$B$ and Seifert ribbon graphs. We now define these ribbon graphs 
here.

\medskip

Let $D\subset S^2$ be a checkerboard coloured  link diagram.  Assign a unique label to each crossing of $D$. A {\em marked $A$-splicing} or a   {\em marked $B$-splicing} of a crossing $c$  is the replacement of the crossing with one of the following two schemes:

\begin{center}
\begin{tabular}{ccccc}
\includegraphics[height=1.2cm]{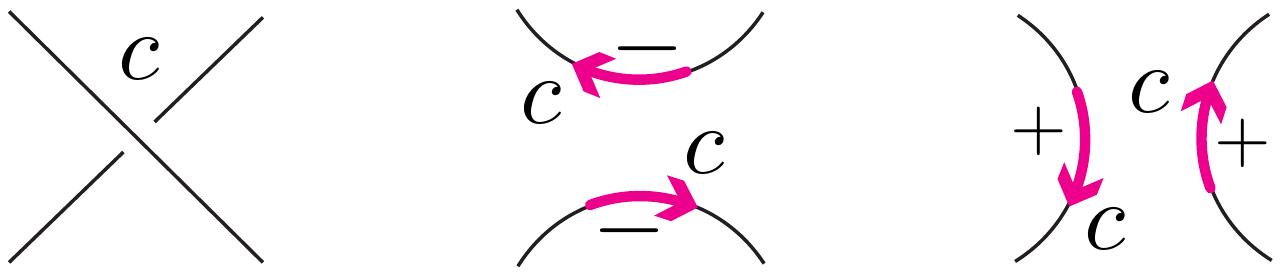}  & & \includegraphics[height=1.2cm]{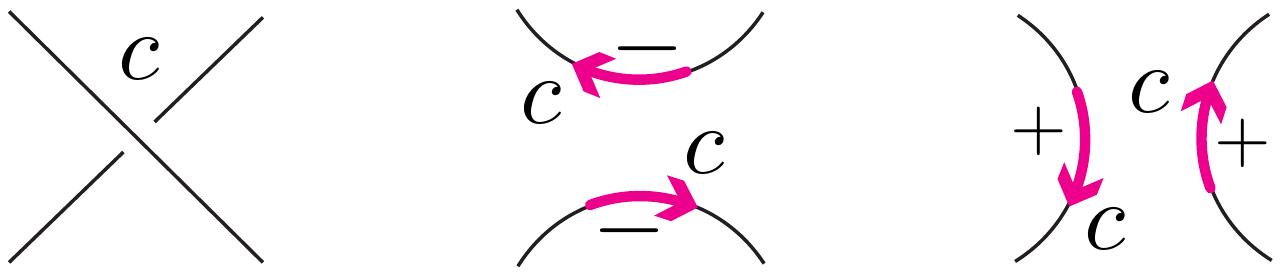} &&
\includegraphics[height=1.2cm]{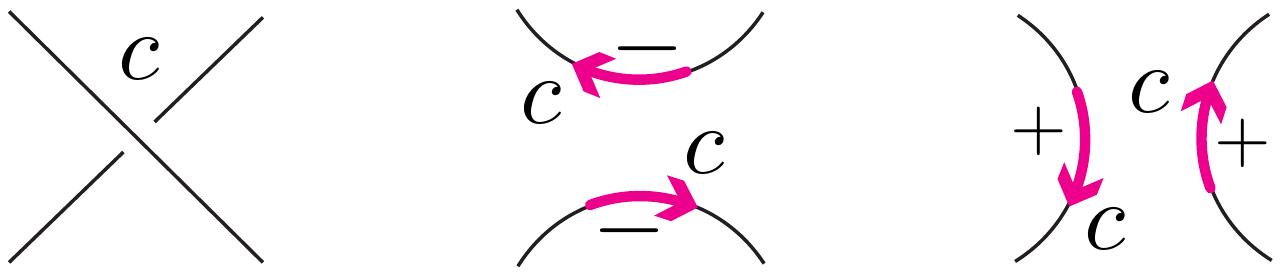} \\
 A crossing in $D$. &&  A marked $A$-splicing. && A marked $B$-splicing.
\end{tabular}
\end{center} 

Notice that we decorate the two arcs in the splicing with signed, 
coloured arrows, chosen to be consistent with an arbitrary 
orientation of the sphere $S^2$. The colour of the arrows is 
determined by the colour of the crossing, and the signs are 
determined by the choice of splicing.

A {\em state} $s$ of a link diagram is an assignment  of a marked $A$- or $B$-splicing to each crossing. Observe that a state is precisely an arrow presentation of a ribbon graph. We will  denote the ribbon graph corresponding to the state $s$ by $\G(D,s)$. We say that  $\G(D,s)$ is a {\em ribbon graph of the link diagram $D$}. We let $\mathcal{G}(D)$ denote the set of ribbon graphs of $D$, so that 
\[\mathcal{G}(D):= \{  \G(D,s)\; | \; s \text{ is a state of } D\}. \]
 $\mathcal{G}(D)$ is independent of the choice of checkerboard colouring used in its construction. 

If $D$ is an {\em oriented} link diagram we may repeat the above 
construction of signed ribbon graphs, but with weights 
$(m_c,\sigma_c)$, where $m_c$ is the Tait sign of the crossing $c$ 
and $\sigma_c$ is its oriented sign. If $s$ is such a state, we 
denote the ribbon graph by $\G_{\sigma}(D,s)$, and we will denote the 
set of ribbon graphs obtained in this way by $\mathcal{G}_{\sigma}(D)$.

\medskip

We will now describe some special elements of $\mathcal{G}(D)$.

\smallskip

{\bf Tait graphs:} Let $\T(D)$ denote a Tait graph of $D$. Then 
$\T(D)\in \mathcal{G}(D)$, and $\T(D)=\G(D,s)$, where the state $s$ is 
obtained either by choosing an $A$-splicing at each $-$ crossing and 
a $B$-splicing at each $+$ crossing, or by choosing a $B$-splicing at 
each $-$ crossing and an $A$-splicing at each $+$ crossing.

Similarly, the bi-weighted Tait graph $\T_{\sigma}(D)$ is an element of 
$\mathcal{G}_{\sigma}(D)$. Again we have 
$\T_{\sigma}(D)=\G_{\sigma}(D,s)$, where $s$ is constructed as for $\T(D)$.

Note that in this construction, the states are chosen so that the 
curves will always follow the black faces, or will always follow the white faces, of the 
checkerboard coloured link diagram.

As plane graphs and genus zero ribbon graphs are equivalent, we will
move freely between the realization of a Tait graph as a plane graph 
and as a genus zero ribbon graph. This should cause no confusion.

\smallskip

{\bf The all-$A$ and all-$B$ ribbon graphs:} The {\em all-$A$ ribbon 
graph} is defined by $\A(D)=\G(D,s)$, where $s$ is the state obtained 
by choosing an $A$-splicing at each crossing. Similarly, the {\em 
all-$B$ ribbon graph} is defined by $\B(D)=\G(D,s)$, where $s$ is the 
state obtained by choosing a $B$-splicing at each crossing.

\begin{example} This example illustrates the the construction of $\A(D)$.
\[\includegraphics[width=3cm]{l1} \quad \raisebox{1cm}{\includegraphics[width=1cm]{arrow}} \quad
\includegraphics[width=3cm]{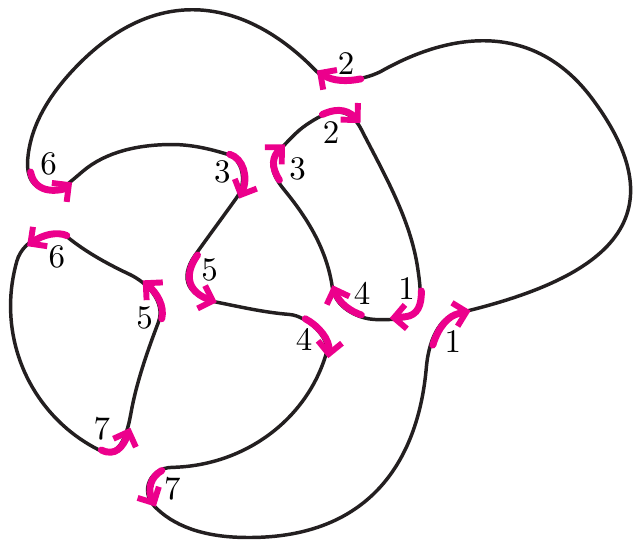} \quad \raisebox{1cm}{\includegraphics[width=1cm]{arrow}} \quad
\raisebox{4mm}{\includegraphics[width=5cm]{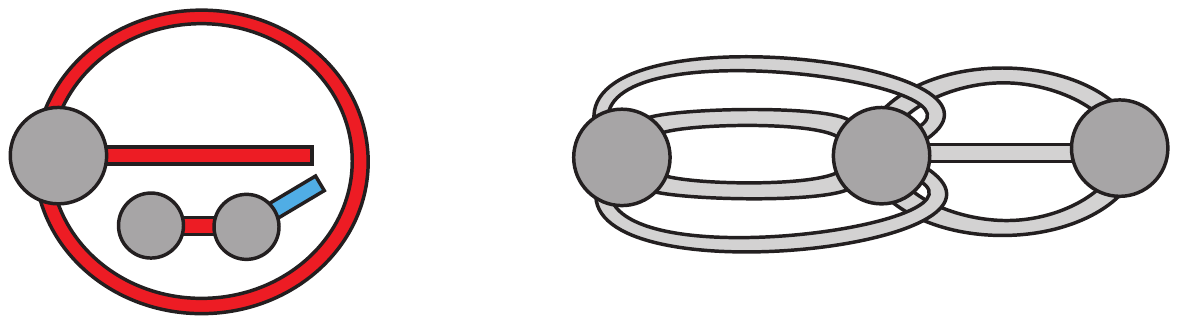}}
\]

\end{example}

\smallskip

{\bf The Seifert ribbon graph:} The Seifert ribbon graph $\s(D)$ is 
obtained by choosing the splicing that is consistent with the 
orientation, as in the following figure:

\begin{center}
\begin{tabular}{cccc}
\includegraphics[height=1.2cm]{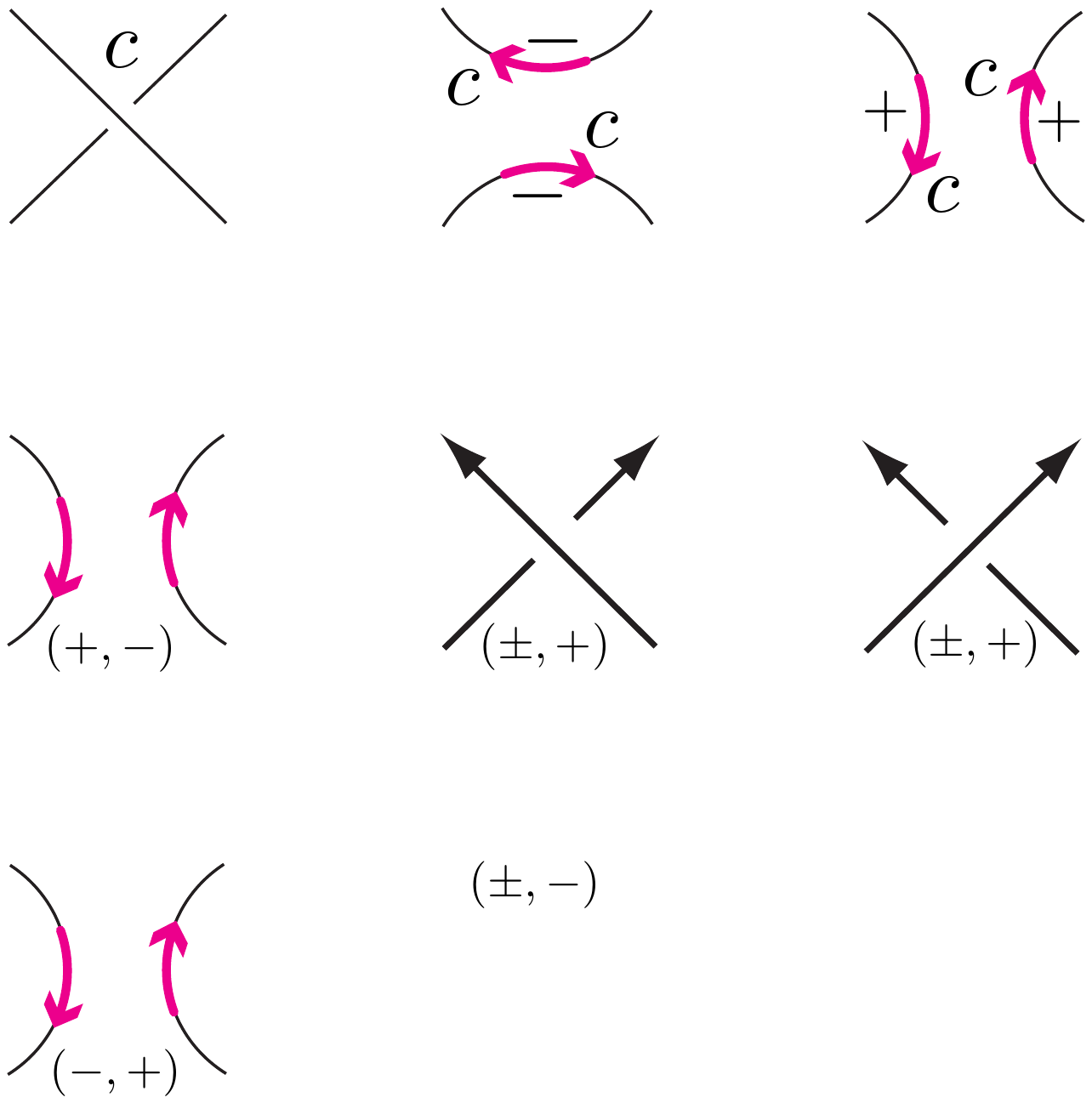}  
\raisebox{4mm}{\includegraphics[width=1.5cm]{arrow}}
\includegraphics[height=1.2cm]{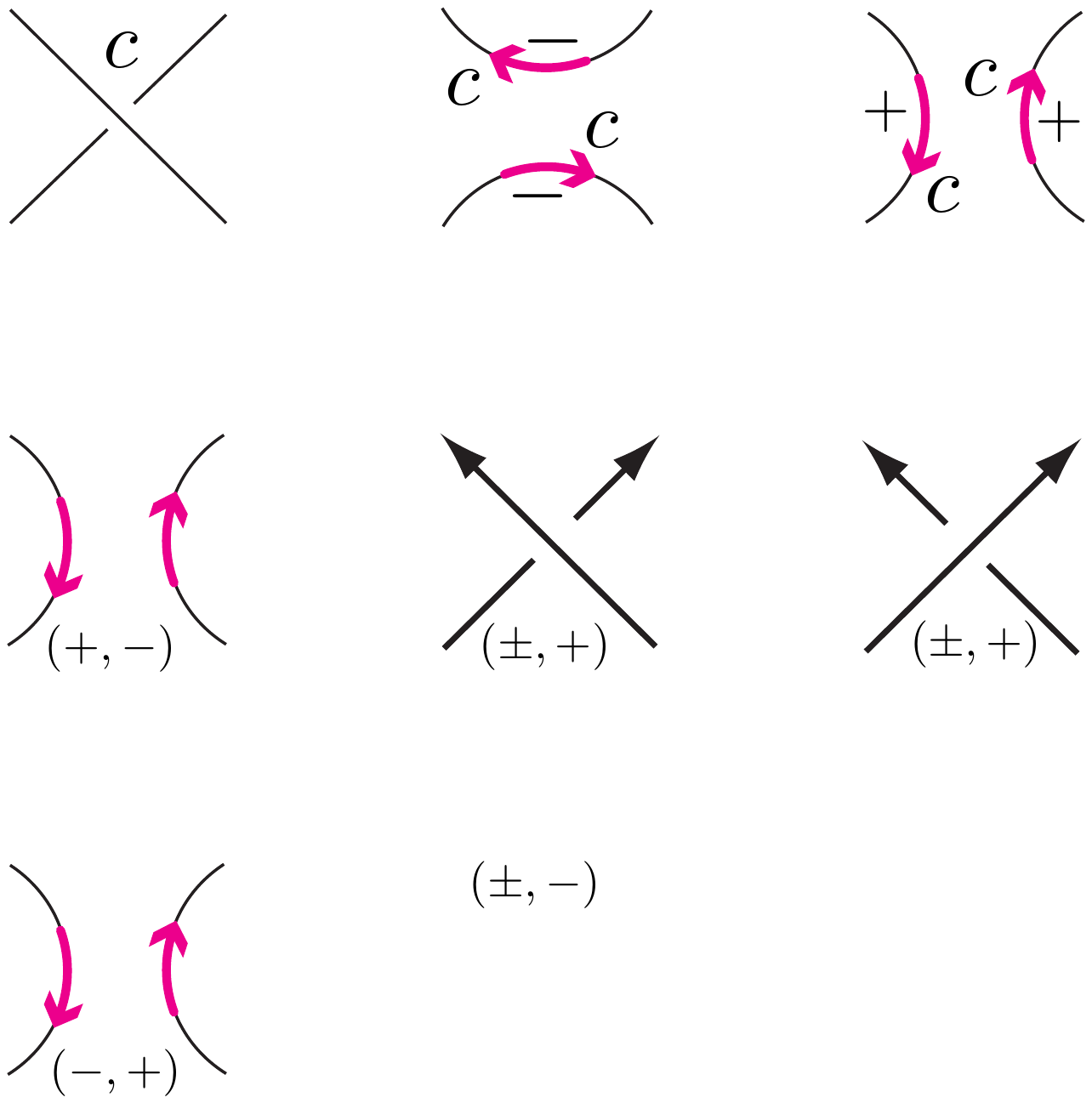}
&\hspace{1cm}\raisebox{7mm}{or}\hspace{1cm} & \includegraphics[height=1.2cm]{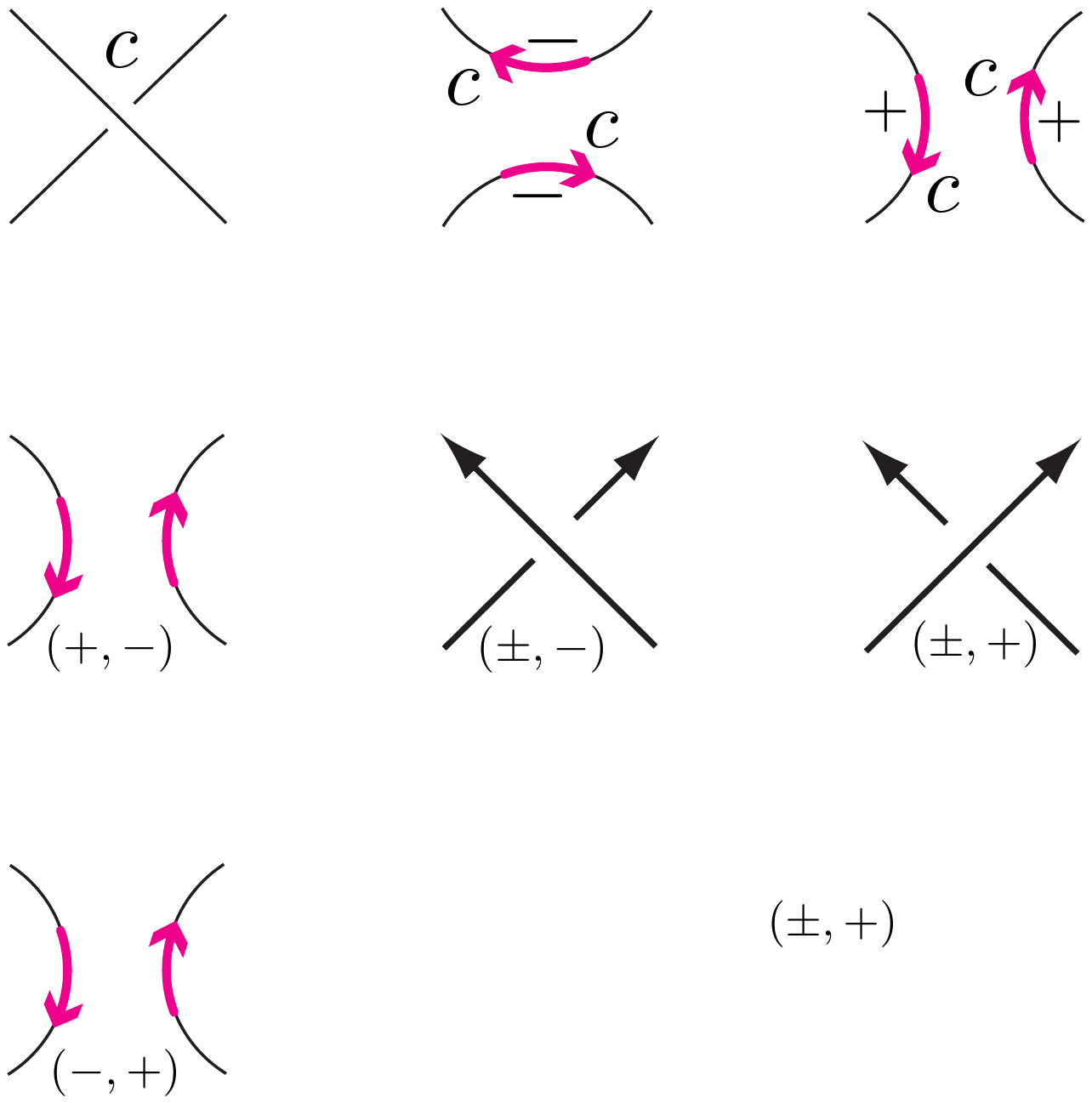}  
\raisebox{4mm}{\includegraphics[width=1.5cm]{arrow}}
\includegraphics[height=1.2cm]{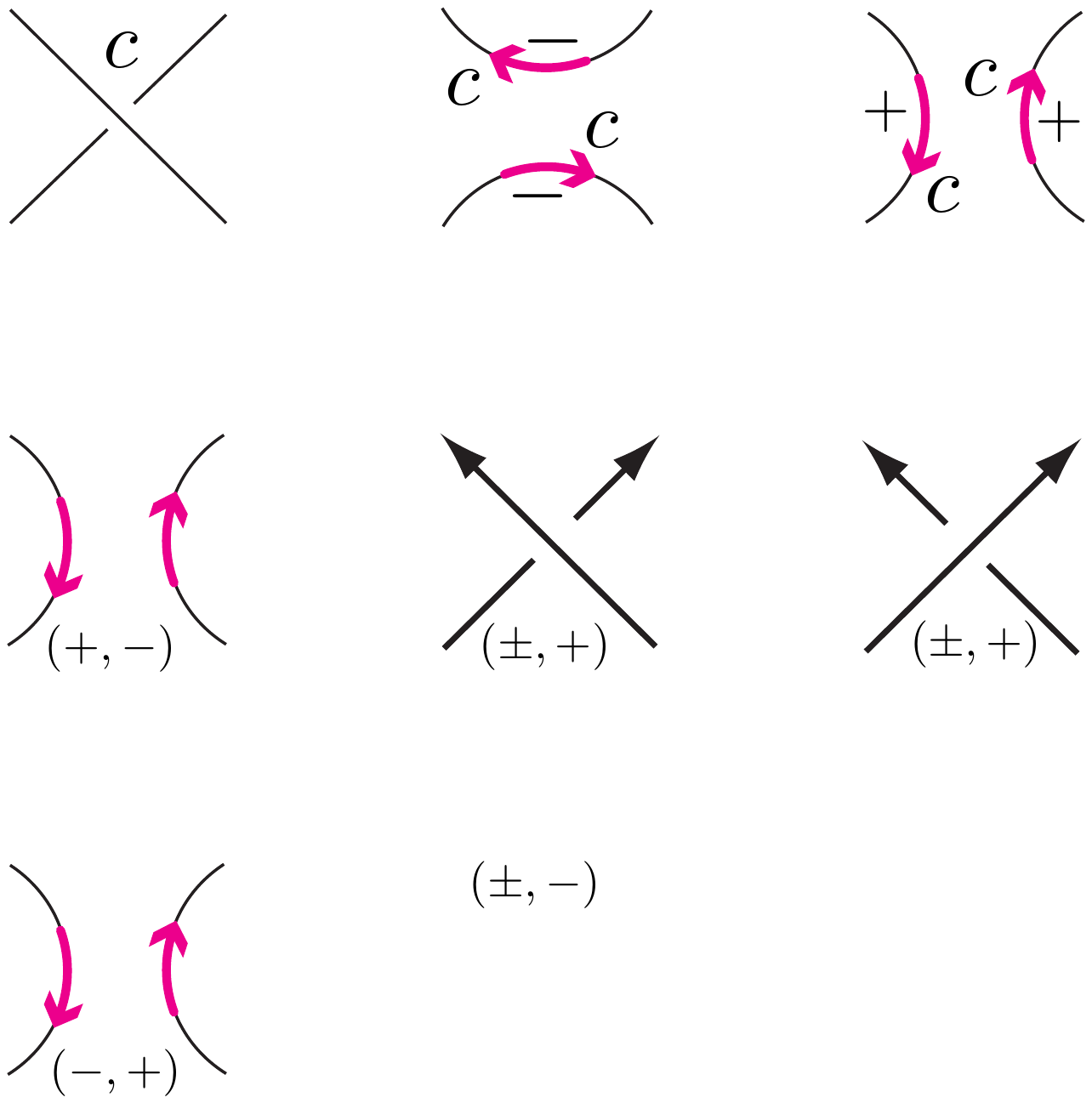} .
\end{tabular}
\end{center} 
Thus if $c$ is a crossing with oriented sign $+$, then choose the 
$A$-splicing at $c$, while if $c$ has oriented sign $-$ then choose 
the $B$-splicing at $c$.

Observe that the edge weights, $(+,-)$ and $(-,+)$, of a Seifert 
graph depend only on the oriented sign of the corresponding crossing 
in the original link diagram. This means that the construction of the 
Seifert graph with weights in $\{(+,-),(-,+)\}$ does not depend upon 
the checkerboard colouring, as one would expect.

\begin{example}
This example illustrates the the construction of $\s(D)$.
\[\includegraphics[width=3cm]{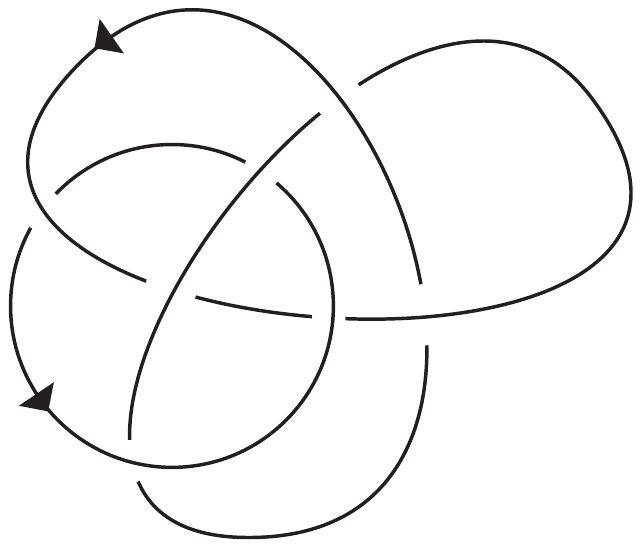} \quad \raisebox{1cm}{\includegraphics[width=1cm]{arrow}} \quad
\includegraphics[width=3cm]{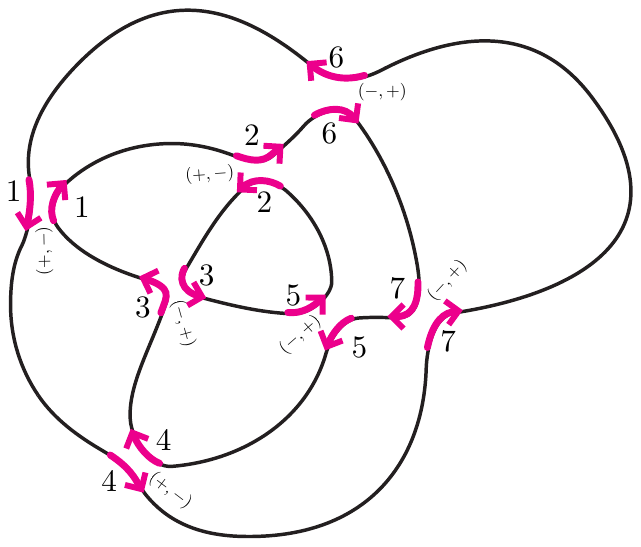} \quad \raisebox{1cm}{\includegraphics[width=1cm]{arrow}} \quad
\raisebox{4mm}{\includegraphics[width=5cm]{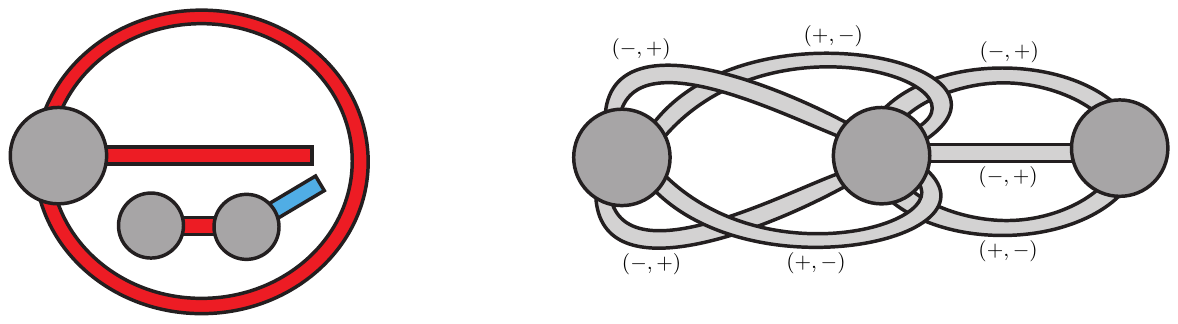}}
\]
\end{example}

\subsection{Dual graphs}\label{ss.dual}

The construction of the {\em dual}, $G^*$, of a cellularly embedded 
graph, $G\subset \Sigma$, is well known: form $G^*$ by placing one 
vertex in each face of $G$ and embed an edge of $G^*$ between two 
vertices whenever the faces of $G$ they lie in are adjacent. 
In particular, if $G$ has $k$  components, $G_1,\ldots, G_k$,  and is cellularly embedded in a surface, then each component of the graph is cellularly embedded in a connected component of the surface, and therefore duality acts disjointly on  components of the graph:  $(G)^* = G_1^* \cup \cdots \cup G_k^*$.

We will 
also need to form duals of non-cellularly embedded graphs. Since the 
properties of duality depend upon whether or not a graph is 
cellularly embedded, we will denote the dual of a not necessarily cellularly 
embedded graph by $G^{\du}$.
The embedded graph $G^{\du}$ is formed 
just as the dual of a cellularly embedded graph is formed, but by 
placing a vertex in each region of $G$, rather than each face. It is 
important to note that, in general,  
$(G^{\du})^{\du}\neq G$. 

There is a natural bijection between the edges of $G$ and the edges of $G^*$ (or of $G^{\du}$). We will generally use this bijection to identify the edges of $G$ and the edges of $G^*$. However, at times we will be working  with $G \cup G^*$ so to avoid confusion we will use $e^*$ to denote the edge of $G^*$ that corresponds to the edge $e$ of $G$, and adopt a similar convention for sets of edges.  

Observe that $G^*$ has a naturally cellular  embedding in $\Sigma$, and that there is a natural (cellular) immersion of $G \cup G^*$ where each edge of $G$ intersects exactly one edge of $G^*$ at exactly one point. We will call this immersion the {\em standard immersion} of $G \cup G^*$.

\medskip

Duality has  a particularly neat description in the language of 
ribbon graphs. Let $G=(V(G), E(G))$ be a ribbon graph, which we can 
regard $G$ as a punctured surface. By filling in the punctures using 
a set of discs denoted $ V(G^*) $, we obtain a surface without 
boundary $\Sigma$. The {\em dual} of $G$  is the ribbon graph $G^* = 
(V(G^*), E(G)) $.

Suppose now that $\ar{G}$ is an arrow-marked ribbon graph, so that 
$\ar{G}$ is a ribbon graph $G$ with  labelled arrows on its 
vertices. Then in the formation of $G^*$ just described, the 
boundaries of the vertices of $G$ and $G^*$ intersect, and therefore 
the marking arrows on $\ar{G}$ induce marking arrows on $G^*$. The 
dual $\ar{G}^*$ of an arrow-marked ribbon graph $\ar{G}$ is the 
dual of its underlying ribbon graph equipped with the induced marking 
arrows.

\subsection{Partial duals and the ribbon graphs of link diagrams}
Partial duality, introduced by Chmutov in \cite{Ch1}, is an extension 
of the concept of  the dual of a cellularly embedded graph. Loosely 
speaking, a partial dual of a graph is obtained by forming the dual 
of a graph only at a given subset of edges. Partial duality has found 
a number of applications in graph theory, physics and knot theory 
(for example see \cite{Ch1, EMM, KRVT09, Mo5, VT10}). Here we are 
interested in the motivating application of partial duality: just as 
geometric duality related the two Tait graphs of a link diagram, 
partial duality relates the ribbon graphs of a link diagram. In 
this section we describe partial duality and give an overview of its 
application to link diagrams.

We will use the construction of a partial dual from \cite{Mo4}. The 
idea behind this construction is that given a set $A$ of edges the 
partial dual with respect to $A$ can be formed by `hiding' the edges 
not in $A$ and replacing them with marking arrows, forming the dual 
of the resulting arrow marked ribbon graph, and then revealing the 
hidden edges. We refer the reader to \cite{Ch1} for the original 
definition of a partial dual and for further details and examples of 
partial duals.

\begin{definition}
Let $G$ be a ribbon graph, $A\subseteq E(G)$ and $ A^c=E(G)-A$. Then the 
{\em partial dual}, $G^{A}$, of $G$ with respect to $A$ is constructed in the following way.
\begin{enumerate}
\item Present $G$ as the arrow-marked ribbon graph $\ar{G-A^c}$.
\item Take the dual of $G-A^c$. The marking arrows on $\ar{G-A^c}$ induce marking arrows on $\left(G-A^c\right)^*$.
\item $G^{A}$ is the ribbon graph corresponding to the arrow-marked ribbon graph $\ar{(G-A^c)^*}$.
\end{enumerate}
\end{definition}

We will use the convention that $A^c:=E(G)-A$ throughout this paper.

\begin{example}\label{ex.dex3} This example illustrates  the construction of a partial dual.
~

\begin{center}
\begin{tabular}{ccc}
 \includegraphics[height=2.0cm]{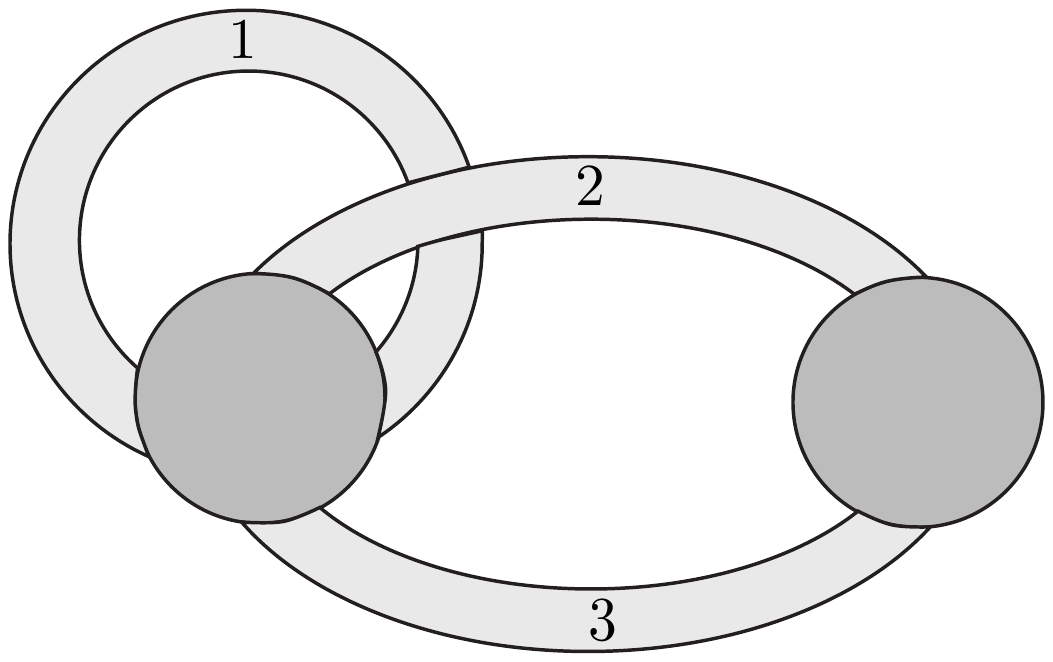}  & \hspace{5mm}\raisebox{7mm}{=}\hspace{5mm} & \includegraphics[height=2.0cm]{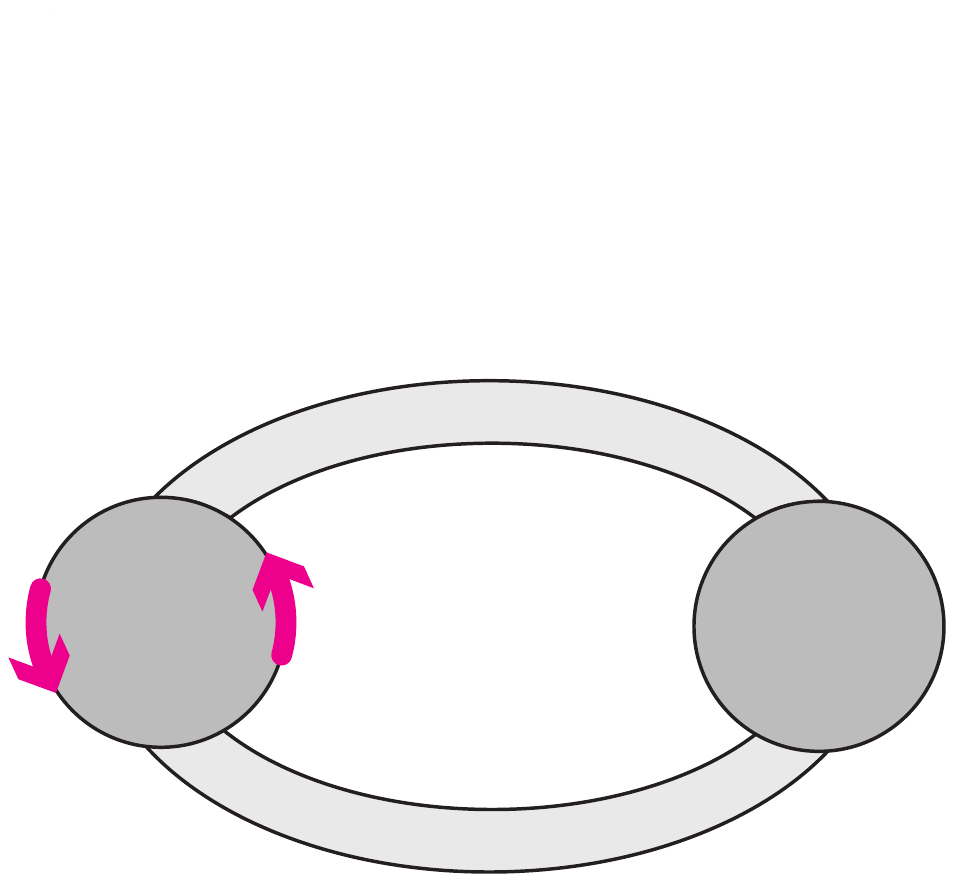}   \\ & & \\
 $G$ with $A=\{2,3\}.$  & &Step 1.  \\ 
 \end{tabular}
 \end{center}
 \begin{center}
\begin{tabular}{ccc}
  \includegraphics[height=3cm]{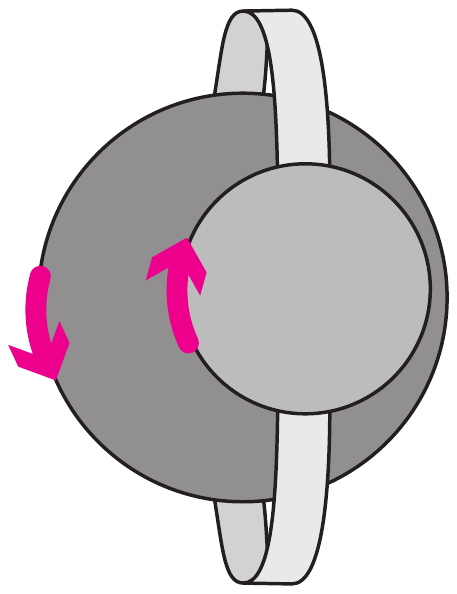}&\hspace{5mm}\raisebox{15mm}{=}\hspace{5mm}& \includegraphics[height=3cm]{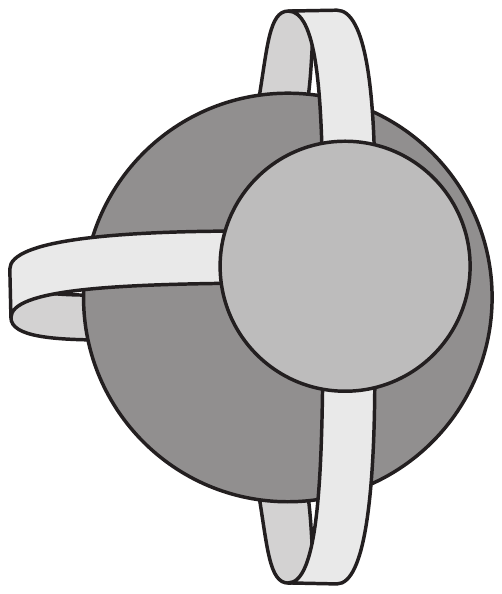}    \hspace{5mm}\raisebox{15mm}{=}\hspace{5mm} \raisebox{6mm}{\includegraphics[height=2.0cm]{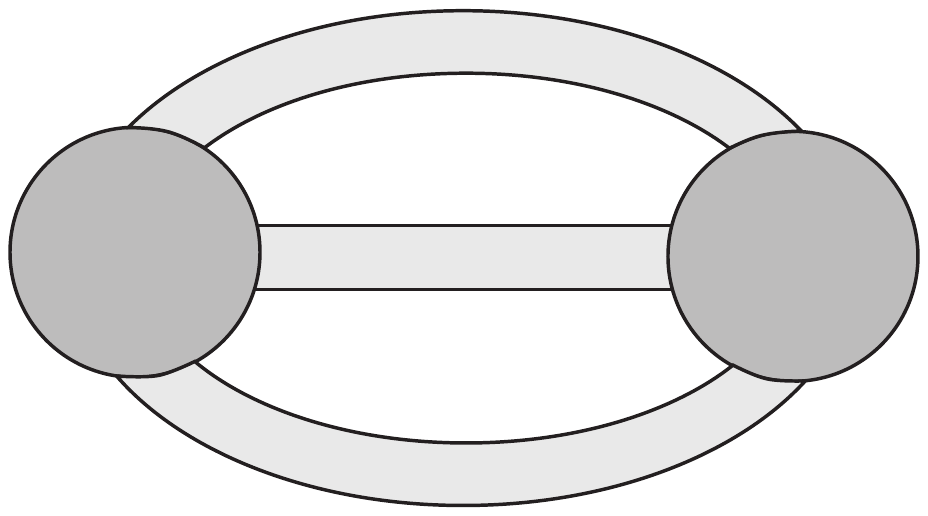}  }    \\ & & \\
Step 2. &&  $G^{A} $.
\end{tabular}
\end{center}
\end{example}

There is a natural bijection between the edges of $G$ and of $G^A$. 
We will usually use this bijection to identify the edges of $G$ with 
the edges of its partial dual.

We will need the following basic properties of partial duality. The 
first five properties are from~\cite{Ch1} and the sixth is from 
\cite{Mo5}.
\begin{proposition}\label{p.pd2}
Let $G$ be a ribbon graph and $A, B\subseteq E(G)$.  Then 
\begin{enumerate}
\item $G^{\emptyset}=G$;
\item  $G^{E(G)}=G^*$, where $G^*$ is the dual of $G$;
\item $(G^A)^B=G^{A\Delta B}$, where $A\Delta B := (A\cup B)\backslash (A\cap B)$ is the symmetric difference of $A$ and $B$;
\item $G$ is orientable if and only if $G^A$ is orientable;
\item partial duality acts disjointly on connected components;
\item If $G$ is orientable, then $g(G^{A})=\frac{1}{2}\left(2k(G)+e(G)-p(G-A^c)-p(G-A)\right)$.
\end{enumerate}
\end{proposition}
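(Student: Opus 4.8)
The plan is to read off (1) and (2) directly from the definition of $G^{A}$, to invoke (3)--(5) as the combinatorial core of partial duality (these are the parts genuinely requiring a local analysis of single-edge partial duals), and to deduce (6) from Euler's formula for ribbon graphs together with a count of how $v$, $e$, $k$, $p$ transform under partial duality.

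For (1), when $A=\emptyset$ we have $A^{c}=E(G)$, so Step~1 presents $G$ as the arrow presentation $\ar{V(G)}$; a disjoint union of disc-vertices is its own ribbon-graph dual (each vertex disc together with the disc filling its single boundary circle is a sphere), so Step~2 returns $\ar{V(G)}$ with the matching of boundary arcs unchanged, and Step~3 re-adds the marking-arrow edges to recover $G$. For (2), when $A=E(G)$ we have $A^{c}=\emptyset$, so Step~1 presents $G$ with no marking arrows, Step~2 is ordinary ribbon-graph duality, and Step~3 adds nothing, giving $G^{*}$. For (3) I would reduce to single edges --- it suffices to show $(G^{\{e\}})^{\{f\}}=G^{\{e,f\}}$ for $e\neq f$ and $(G^{\{e\}})^{\{e\}}=G$ --- and then verify the finitely many local configurations of an arrow presentation near one or two distinguished edges; (4) and (5) come out of the same local picture. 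Since all of these are in \cite{Ch1}, I would cite them and concentrate on (6).

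For (6), recall that for an orientable ribbon graph genus is additive over connected components, so Euler's formula reads $v(H)-e(H)+p(H)=2k(H)-2g(H)$ for every orientable ribbon graph $H$. Apply this to $H=G^{A}$, which is orientable by (4), and evaluate each term. First, $e(G^{A})=e(G)$ by the canonical bijection on edges, and $k(G^{A})=k(G)$ since partial duality acts disjointly on components (item~(5)) and carries a connected ribbon graph to a connected one. Second, by Step~2 the vertices of $G^{A}$ are the vertices of $(G-A^{c})^{*}$; the dual of a ribbon graph has exactly one vertex per boundary component of the original, so $v\big((G-A^{c})^{*}\big)=p(G-A^{c})$, and the marking-arrow edges added in Step~3 leave the vertex set untouched, whence $v(G^{A})=p(G-A^{c})$. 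Third, to count faces use (2) and (3): since $E(G^{A})=E(G)$ we get $(G^{A})^{*}=(G^{A})^{E(G)}=G^{A\Delta E(G)}=G^{A^{c}}$ (as $A\Delta E(G)=E(G)\setminus A=A^{c}$), and dualizing a ribbon graph interchanges vertices with boundary components, so $p(G^{A})=v\big((G^{A})^{*}\big)=v(G^{A^{c}})$; applying the vertex count just obtained with $A$ replaced by $A^{c}$ (and noting $(A^{c})^{c}=A$) gives $v(G^{A^{c}})=p(G-A)$. Feeding $k(G^{A})=k(G)$, $v(G^{A})=p(G-A^{c})$, $e(G^{A})=e(G)$ and $p(G^{A})=p(G-A)$ into Euler's formula and solving for $g(G^{A})$ yields exactly $g(G^{A})=\frac{1}{2}\big(2k(G)+e(G)-p(G-A^{c})-p(G-A)\big)$.

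As for the main obstacle: if (1)--(5) are taken as known, the only point in (6) that is not pure bookkeeping is the identity $p(G^{A})=p(G-A)$, and the observation $(G^{A})^{*}=G^{A^{c}}$ disposes of it in one line. If one insists on proving the whole proposition from scratch, the real work is the composition law (3): organising the single-edge local moves and checking that they compose correctly --- in particular that the induced matching of boundary arcs is consistent under iteration --- is the delicate part, after which (4), (5) and then (6) follow with little further effort.
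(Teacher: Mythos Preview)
The paper does not actually prove this proposition: it simply records that items (1)--(5) are taken from \cite{Ch1} and item (6) from \cite{Mo5}, with no argument given. Your proposal is therefore strictly more detailed than what appears in the paper, and your derivation of (6) --- applying Euler's formula to $G^{A}$ after reading off $v(G^{A})=p(G-A^{c})$ from the construction and obtaining $p(G^{A})=p(G-A)$ via the identity $(G^{A})^{*}=G^{A^{c}}$ --- is correct and is in the same spirit as the argument in \cite{Mo5}.
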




Recall that the two (bi-weighted) Tait graphs of a link diagram are 
related by duality. Analogously, partial duality relates all of the 
ribbon graphs of  a link diagram. In fact, the following result holds.
\begin{proposition}\label{p.taitdual}
A ribbon graph represents 
a link diagram if and only if it is a partial dual of a plane graph.
\end{proposition}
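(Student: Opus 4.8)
The plan is to prove both directions by working with the arrow-presentation description of the ribbon graphs of a link diagram, together with Proposition~\ref{p.pd2}(3), which tells us that the relation ``is a partial dual of'' is an equivalence relation on ribbon graphs (symmetry follows since $(G^A)^A=G^{A\Delta A}=G^{\emptyset}=G$). So it suffices to show that a single ribbon graph in $\mathcal{G}(D)$ is a partial dual of a plane graph, and conversely that every partial dual of a plane graph arises as some $\G(D,s)$.

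For the forward direction, fix a checkerboard colouring of $D$ and recall from the discussion of Tait graphs that $\T(D)=\G(D,s_0)$, where $s_0$ is the state that takes a $B$-splicing at each $+$ crossing and an $A$-splicing at each $-$ crossing, and that $\T(D)$ is a genus-zero ribbon graph, i.e.\ a plane graph. Now let $s$ be an arbitrary state of $D$, and let $A\subseteq E$ be the set of crossings at which $s$ and $s_0$ differ. The key claim is that changing the splicing at a single crossing $c$ from its value in $s_0$ to the opposite value has exactly the effect, on the arrow presentation, of replacing the edge $e_c$ by its dual edge — equivalently, $\G(D,s)=\T(D)^{A}$. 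This is essentially a local check: the two marked splicings at a crossing differ precisely by the local move that swaps which pair of arcs an arrow connects, which is the arrow-presentation incarnation of edge-duality (compare the definition of partial duality, steps (1)–(3), localized at one edge). Granting the claim, $\G(D,s)$ is a partial dual of the plane graph $\T(D)$, and since every ribbon graph of $D$ is some $\G(D,s)$, while every ribbon graph representing $D$ is by definition some such $\G(D,s)$ (this is how ``represents a link diagram'' was defined in Subsection~\ref{ss.rgld}), the forward implication follows.

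For the converse, suppose $H=P^A$ for some plane graph $P$ and some $A\subseteq E(P)$. Realize $P$ as a Tait graph: by the standard theory of Tait graphs, every connected signed plane graph is $\T(D)$ for some link diagram $D$ (reconstruct $D$ by placing a crossing on each edge with the sign dictated by the edge weight, and the argument extends to our unsigned setting by choosing signs freely, since $\mathcal{G}(D)$ is colouring-independent). Then, running the forward construction in reverse, the state $s$ obtained from $s_0$ by switching the splicing at exactly the crossings in $A$ satisfies $\G(D,s)=\T(D)^{A}=P^{A}=H$, so $H\in\mathcal{G}(D)$ and hence $H$ represents $D$.

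The main obstacle is the local claim in the forward direction: that the two marked splicings of a crossing are related, at the level of arrow presentations, exactly by dualizing the corresponding edge. I would prove this by drawing the two splicings at a crossing, reading off the resulting pair of arrow-marked arcs in each case, and checking that passing from one to the other coincides with the arrow-presentation description of partial duality at a single edge (definition of $G^A$ with $|A^c|$ all but one edge, or more simply $G^{\{e\}}$). Once this single-edge statement is in hand, the general case follows from Proposition~\ref{p.pd2}(3) by switching one crossing at a time, i.e.\ $\T(D)^{A}=(\cdots((\T(D)^{\{e_1\}})^{\{e_2\}})\cdots)^{\{e_{|A|}\}}$, each step corresponding to toggling one splicing. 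A minor secondary point to handle carefully is bookkeeping of the edge weights (Tait signs and, in the oriented case, the oriented signs) under this correspondence, but the excerpt has already recorded how duality acts on weights, so this is routine.
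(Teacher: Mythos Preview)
The paper does not actually prove Proposition~\ref{p.taitdual}; it is stated as a known result, and the closely related Proposition~\ref{p.pdt} is attributed to \cite{Ch1} and \cite{Mo5} with no proof given in the text. So there is no in-paper argument to compare against.

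Your proposal is nonetheless the standard proof, and it is correct. The forward direction is precisely Proposition~\ref{p.pdt}(1), and the ``local claim'' you isolate---that swapping the $A$- and $B$-splicing at a crossing is, at the level of arrow presentations, exactly the single-edge partial dual---is indeed the crux; it is verified by comparing the two marked splicings in Subsection~\ref{ss.rgld} with the arrow-presentation definition of $G^{\{e\}}$. Iterating via Proposition~\ref{p.pd2}(3) then gives $\G(D,s)=\T(D)^{A}$. For the converse, the only external input is that every connected plane graph is the Tait graph of some link diagram, which is the classical medial construction; after that your reversal of the forward argument works. One small point worth making explicit: the paper's standing hypothesis is that link diagrams are connected as $4$-valent graphs in $S^2$, which corresponds to $\T(D)$ being connected. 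For a disconnected plane graph $P$, Proposition~\ref{p.pd2}(5) lets you handle each component separately (and the resulting link diagram will be a split diagram), so no generality is lost.
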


Partial duality provides a natural construction for the various 
ribbon graphs associated with a link diagram described in 
Section~\ref{ss.rgld}. Let $G$ be an edge weighted ribbon graph and 
$A\subseteq E(G)$. Then $G^A$ is also an edge weighted ribbon graph. 
The edge-weights of $G^A$ are determined as follows:
\begin{itemize}
 \item if an edge $e$ of $G$ has weight $m_e\in \{+,-\}$, then the 
 corresponding edge of $G^A$ has weight $-m_e$ if $e\in A$, and has 
 weight $m_e$ if $e\notin A$.
 \item if an edge $e$ of $G$ has weight $(m_e, \sigma_e) \in 
 \{+,-\}\times \{+,-\}$, then the corresponding edge of $G^A$ has 
 weight $(-m_e, \sigma_e)$ if $e\in A$, and has weight 
 $(m_e,\sigma_e)$ if $e\notin A$.
\end{itemize}

With this action of partial duality on the edge-weights we have the 
following proposition. 

\begin{proposition}\label{p.pdt} Let $D$ be an oriented link diagram. Then
\begin{enumerate}
\item all of the ribbon graphs of the link are partial duals of either 
of the Tait graphs $\T(D)$;
\item $\A(D)=\T(D)^A$, where $A$ is the set of $+$ weighted edges 
of $\T(D)$;
\item $\B(D)=\T(D)^A$, where $A$ is the set of $-$ weighted edges 
of $\T(D)$;
\item $\s(D)=\T_{\sigma}(D)^A$, where $A$ is the set of all $(+,+)$ 
and $(-,-)$ weighted edges of  $\T_{\sigma}(D)$.
\end{enumerate}
\end{proposition}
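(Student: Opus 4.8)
The plan is to reduce everything to the construction of the ribbon graphs of a link diagram as arrow presentations (states), and to the fact (Proposition~\ref{p.taitdual} and the definition of partial duality via arrow-marked ribbon graphs) that forming a partial dual $G^A$ amounts to ``re-splicing'' the crossings whose edges lie in $A$. First I would recall that a Tait graph $\T(D)$ is itself the ribbon graph $\G(D,s_0)$ of a specific state $s_0$: at a $-$ crossing one takes an $A$-splicing and at a $+$ crossing a $B$-splicing (or the colour-reversed choice), so that every splicing curve follows the black faces of the checkerboard colouring. The key observation, already implicit in Section~\ref{ss.rgld}, is that at a given crossing $c$ the two marked splicings ($A$ and $B$) are exactly the two local arrow-presentation pictures that get interchanged when one forms the dual at the corresponding edge $e_c$; this is because the signed coloured arrows on the two splicing arcs were chosen consistently with a fixed orientation of $S^2$, which is precisely the data that records how the edge meets its two incident vertex-discs. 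Hence ``changing the splicing at $c$ from $A$ to $B$'' is literally the operation of taking the partial dual at the single edge $e_c$, together with the prescribed sign change $m_e \mapsto -m_e$ (the oriented sign $\sigma_e$ being a property of the crossing, not of the splicing, is unchanged). This establishes part~(1): starting from $\T(D)=\G(D,s_0)$, any other state $s$ differs from $s_0$ by changing the splicing at some set $A$ of crossings, and by iterating the single-edge case and Proposition~\ref{p.pd2}(3) one gets $\G(D,s)=\T(D)^A$ with the stated edge-weight action; since the two Tait graphs of $D$ are duals, i.e.\ partial duals with $A=E$, the same holds starting from either Tait graph.

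With part~(1) in hand, parts~(2)--(4) are bookkeeping: I would just identify, for each named ribbon graph, which crossings have had their splicing changed relative to the state $s_0$ defining $\T(D)$. For $\A(D)$ one takes the $A$-splicing at every crossing; since $s_0$ already uses the $A$-splicing at every $-$ crossing, the crossings that change are exactly the $+$ crossings, so $A$ is the set of $+$-weighted edges of $\T(D)$, giving~(2). Dually, $\B(D)$ uses the $B$-splicing everywhere, which agrees with $s_0$ at the $+$ crossings and differs at the $-$ crossings, giving~(3). For the Seifert graph I would work with the bi-weighted Tait graph $\T_\sigma(D)$ and the state $s_0$ that builds it; by definition $\s(D)$ takes the $A$-splicing at crossings of oriented sign $+$ and the $B$-splicing at those of oriented sign $-$, so the crossings whose splicing differs from $s_0$ are exactly those where the Tait sign and the oriented sign ``agree'' in the appropriate sense — tracing through the sign conventions for the $s_0$ that yields $\T_\sigma(D)$, these are precisely the edges weighted $(+,+)$ or $(-,-)$. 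That gives the set $A$ in~(4), and the edge-weight action of Proposition~\ref{p.pd2}-style partial duality sends $(m_e,\sigma_e)$ to $(-m_e,\sigma_e)$ on $A$, producing exactly the weights $\{(+,-),(-,+)\}$ that characterise a Seifert graph.

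The main obstacle, and the step I would spend the most care on, is the first one: verifying cleanly that a single change of splicing at one crossing really is the partial dual at the corresponding edge, with the correct induced behaviour on the signed coloured arrows. One has to check that the orientation of $S^2$ used to decorate the splicing arcs matches the orientation data used in the arrow-presentation definition of the ribbon graph of a state, and that this matching is compatible with the ``dual at one edge'' picture of Definition of partial duality (present $G$ as $\ar{G-e}$, dualise the one-edge ribbon graph, re-add the edge). A convenient way to handle this without a long figure-chase is to verify it directly for a crossing in a diagram with a single crossing — where $\T(D)$ is a single loop or a single edge between two vertices and its partial dual at that one edge is computed by hand — and then note that the local nature of both the splicing operation and the partial-dual-at-an-edge operation lets one globalise. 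Everything after that is combinatorial comparison of states, and the sign rules are forced by the conventions already fixed in Section~\ref{ss.rgld}.
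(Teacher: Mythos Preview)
Your proposal is correct. Note, however, that the paper does not actually give a proof of this proposition: it refers the reader to \cite{Ch1} and \cite{Mo5} for items (1)--(3) and says that (4) is similar and therefore omitted. Your argument is precisely the approach taken in those references---indeed, partial duality was introduced in \cite{Ch1} for exactly this reason, and the construction there is set up so that switching the splicing at a single crossing is, by design, the partial dual at the corresponding edge. Your plan to verify this via the arrow-presentation description (check it for a single crossing and then appeal to locality plus Proposition~\ref{p.pd2}(3)) is the right way to see it and is essentially how it is done in those papers.

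Your bookkeeping for (2)--(4) is also correct. In particular, the case analysis you outline for (4)---comparing the Seifert state to the Tait state $s_0$ and finding that they differ precisely at crossings with Tait sign and oriented sign equal, i.e.\ at the edges weighted $(+,+)$ or $(-,-)$---is exactly the ``similar'' argument the paper declines to write out, and your observation that the induced weight change $(m_e,\sigma_e)\mapsto(-m_e,\sigma_e)$ on these edges lands in $\{(+,-),(-,+)\}$ is a nice consistency check.
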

A proof of the first three statements can be found in \cite{Ch1} or \cite{Mo5}, and 
the proof of the fourth is similar and is therefore omitted.


\section{Seifert Graphs}\label{s.ab}

In this section we focus on the structure of Seifert graphs. It is 
well known that for any link diagram $D$, its Seifert ribbon graph 
$\s(D)$ is bipartite (see \cite{Cr04}, for example). Although 
biparticity is a necessary condition for a graph to be the Seifert 
graph of a link diagram, it is easily seen that it is not sufficient. 
Here we provide a necessary and sufficient condition for a graph to 
be the Seifert graph of a link diagram. A plane graph is bipartite if 
and only if its dual is Eulerian (see \cite{Bondy}, for example), and 
 our characterization of Seifert graphs will be stated in 
terms of the dual concept of Eulerian graphs.

Being bipartite or Eulerian is a property of abstract graphs rather 
than embedded graphs (in the sense that the properties are 
independent of how the graph is drawn in a surface). Accordingly, we 
will need to work with the abstract graphs of a link diagram. We will 
say that two embedded graphs $G$ and $H$ are  {\em equivalent as 
abstract graphs}, written $G\cong H$, if $G$ and $H$ are drawings of 
the same (abstract) graph. An abstract graph $G$ is a {\em graph of a 
link diagram} $D$ if $G\cong \G$, for some $\G \in \mathcal{G}(D)$. 
We note that, here, the graph of a link diagram is not signed. 
Finally, we say that an (abstract) graph $\bS(D)$ is {\em Seifert 
graph} of a link diagram $D$ if $\bS(D) \cong \s(D)$.

We will prove the following characterization of Seifert graphs.
\begin{theorem}\label{t.sc}
A graph $G$ is the Seifert graph of a link diagram $D$ if and only if 
\[G\cong [(H\cup H^*) - (A^c \cup A^*)]^{\du} =:\Phi^{\du},\]  
where $H$ is  plane, $H\cup H^*$ has the standard immersion, and each component of $\Phi$ is Eulerian.
Moreover, $A$ is the set of $(\pm, \pm)$-weighted edges in the Tait graph $\T_{\sigma}(D)$.
\end{theorem}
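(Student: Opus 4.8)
The plan is to translate the statement "$G$ is the Seifert graph of some link diagram" into a statement about partial duals of plane graphs, and then to identify, on the plane-graph side, exactly which combinatorial configurations arise. By Proposition~\ref{p.pdt}(4), $\s(D) = \T_\sigma(D)^A$ where $A$ is the set of $(\pm,\pm)$-weighted edges of the bi-weighted Tait graph $\T_\sigma(D)$, and $H:=\T_\sigma(D)$ is plane. So for the forward direction I would start from $H$ plane with a partition $E(H) = A \sqcup A^c$, and massage the partial dual $H^A$ into the stated form. Using the construction of the partial dual via arrow-marked ribbon graphs together with the ribbon-graph description of duality in Subsection~\ref{ss.dual}, one sees that $H^A$ is built from $H - A^c$ by adding, along the boundary of the dual vertices $V((H-A^c)^*)$, the edges of $A^c$ in their original (non-dualized) positions. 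Concretely, taking the standard immersion of $H \cup H^*$ and deleting the edges $A^c$ of $H$ and the edges $A^*$ of $H^*$ (i.e. the duals of the $A$-edges), the resulting embedded graph $\Phi := (H\cup H^*) - (A^c \cup A^*)$ has $H^A$, up to equality of ribbon graphs, as (a spanning ribbon subgraph related to) its dual $\Phi^{\du}$; the underlying abstract graph of $\Phi^{\du}$ is $\s(D)$ as an abstract graph. So the identity $\s(D) \cong \Phi^{\du}$ is essentially bookkeeping with the partial-dual construction, and the only remaining point for the forward direction is: why is each component of $\Phi$ Eulerian?

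For the Eulerian condition I would argue as follows. In the standard immersion, each crossing (edge $e$ of $H$, dual edge $e^*$ of $H^*$) corresponds to a crossing of the link diagram, and a vertex of $\Phi$ is a vertex either of $H-A^c$ or of $H^* - A^*$. A vertex $v$ of $H$ (i.e. a black region, say) has degree in $\Phi$ equal to the number of edges of $A$ incident to it — that is, the number of crossings around that region at which we did \emph{not} dualize. But $A$ consists precisely of the $(\pm,\pm)$-edges, and the defining figures for the Seifert splicing show that an edge is $(\pm,\pm)$-weighted exactly when the two strands through the crossing are oriented so that, locally, the oriented diagram is "co-oriented" with respect to that region's coloring in a fixed way. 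Going around the boundary of a fixed region of $D$ and using that the link is a coherently oriented curve (each strand enters and leaves), the crossings split into two types around the region, and a parity/counting argument shows the number of $A$-type crossings on the boundary of any region is even — giving even degree at every vertex of $\Phi$. I would phrase this via the oriented sign together with the Tait sign, essentially the same computation that shows Seifert graphs are bipartite, but bookkeeping it as an Eulerian condition on the dual side. This is the step I expect to be the main obstacle: making the parity argument clean and colouring-independent, and correctly matching "$(\pm,\pm)$-edge of $\T_\sigma(D)$" to a local picture at the crossing that is visibly symmetric between the two regions.

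For the converse, suppose $G \cong \Phi^{\du}$ with $H$ plane, $H\cup H^*$ in standard immersion, and each component of $\Phi$ Eulerian. I would reverse the construction: orient $H$'s underlying plane diagram's crossings so that the $A$-edges (where $A := E(H) \setminus$ the deleted $H$-edges $A^c$) receive weight $(\pm,\pm)$ and the $A^c$-edges receive weight $(\pm,\mp)$ — this is possible locally crossing-by-crossing, and the Eulerian hypothesis on $\Phi$ is exactly what guarantees these local orientation choices are globally consistent, i.e. that they patch to an honest orientation of a link diagram $D$ with $\T_\sigma(D) = H$ as a bi-weighted plane graph. (Here one uses the standard fact quoted in the introduction, in its oriented refinement: consistency of a choice of "in/out" at each region is equivalent to an Eulerian-type parity condition, dual to bipartiteness.) Then by Proposition~\ref{p.pdt}(4), $\s(D) = H^A = \Phi^{\du}$ as abstract graphs, so $G$ is the Seifert graph of $D$. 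I would present the two directions in this order, lead with the partial-dual bookkeeping lemma identifying $H^A$ with $\Phi^{\du}$, then do the parity argument once and invoke it for both directions.
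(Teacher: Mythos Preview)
Your approach is genuinely different from the paper's, and the difference matters.

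The paper does \emph{not} prove the identity $H^A \cong [(H\cup H^*) - (A^c \cup A^*)]^{\du}$ and then invoke Proposition~\ref{p.pdt}(4). Instead it argues entirely on the link-diagram side: it introduces a $\{c,d\}$-labelling of the edges of the Tait graph $\T$ (an edge is $c$ if resolving the corresponding crossing along the orientation merges the two black regions, $d$ if it separates them), proves by a walk-around-a-region parity argument that each vertex of $\T$ meets an even number of $c$-edges (so the subgraphs $C\subset\T$ and $C'\subset\T^*$ have Eulerian components), sets $\Phi=i(C)\cup i(C')$, and then proves $\Phi^{\du}\cong\bS(D)$ by building an explicit graph isomorphism that sends Seifert circles to faces of $\Phi$. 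The converse is a careful reconstruction (Theorem~\ref{cd colourings giving links}): given Eulerian $C$, $C'$, one orients the blocks of $\Phi^{\du}$ one at a time, propagating across cut vertices, and splices together a concrete oriented diagram $D$ with $\T(D)=H$. Partial duals appear nowhere in the proof; the connection to $\T_\sigma(D)^A$ is made only in a remark \emph{after} the proof.

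Your route via partial duals is legitimate in principle, but you are underestimating your ``bookkeeping lemma''. The identity $G^A \cong [(G\cup G^*) - (A^c \cup A^*)]^{\du}$ for abstract graphs is exactly the content of the paper~\cite{HM11} cited in the remark following the proof, and it is proved there using the characterization of partial duals from~\cite{Mo4}; it is not a one-line unwinding of the arrow-marked definition. The subtlety is that the vertices of $H^A$ are the boundary components of the ribbon graph $H-A^c$, and matching these with the regions of $\Phi$ in $S^2$ (some of which come from the $H$-side, some from the $H^*$-side, and which interact through the standard immersion) requires real work. If you can supply a clean self-contained proof of that lemma, your argument goes through and is arguably more conceptual; but as written, the step you call bookkeeping is the step with the most content, and the paper deliberately avoids it by working directly with Seifert circles. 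Your parity argument for the Eulerian condition and your sketch of the converse are broadly in line with the paper's Theorems~\ref{Even c degree} and~\ref{cd colourings giving links}, though the latter is more delicate than ``patch to an honest orientation'' suggests: the paper needs a block-by-block orientation scheme and an argument that faces of $\Phi$ are discs or annuli.
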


This theorem will follow from a careful analysis of how the Seifert 
graph is formed. This analysis revolves around an algorithm for  
obtaining the Seifert graph from a Tait graph which does not require 
the use of partial duals, {\em i.e.} we provide a way to recover 
$\bS(D)$ from $\T(D)$ without having to construct the ribbon graph 
$\s(D)$.

To obtain this algorithm, we follow the steps of the Seifert algorithm on the
link diagram $D$, while examining how each step of this algorithm
affects the graph $\T:=\T(D)$. Since Seifert graphs are not signed, we will ignore the edge weights of Tait graphs throughout this section.  We begin by resolving the crossings
of our oriented link diagram $D$, by following the orientation. Each
crossing of $D$ corresponds to one edge of the graph $\T$, and
when resolving the given crossing of our link diagram we pay careful
attention to what happens to the corresponding edge of $\T$.

At each crossing of $D$ there are two regions corresponding to
vertices of $\T$, which we will refer to as the black regions.
We can see in Figure \ref{c and d edges} that, depending on the
orientation of the components of the link $D$, when we resolve a
crossing of $D$ we either separate the black regions, or we merge
them together. If the regions are separated, the edge corresponding
to that crossing is deleted, while if the regions are merged together
the edge corresponding to that crossing is contracted. We therefore
label the edges of $\T$ with the letters $c$ for contraction and
$d$ for deletion.

It is these $c$ and $d$ labels that carry the information about the
link orientation. If we change the orientation of all the components
of $D$ then the labels will not change. If, however, we only change
the orientation of one of the components, then at a given crossing
involving that component (other than a self-crossing) the label of
the corresponding edge of $\T$ will change from $c$ to $d$ or vice 
versa.

If $D$ has $k$ components, there are $2^{k-1}$ possible $\{c,d\}$ colourings. Neither $\T$ 
nor $\bS:=\bS(D)$ carry the under- and over-crossing information, which is 
completely independent of the $\{c,d\}$ colouring. Many different 
links will therefore correspond to the same $\T$, and even more 
to the same $\bS$. $\T$ and $\bS$ have the same edges, and so a 
$\pm$ colouring of either will do for both.

\setlength{\unitlength}{1pt}
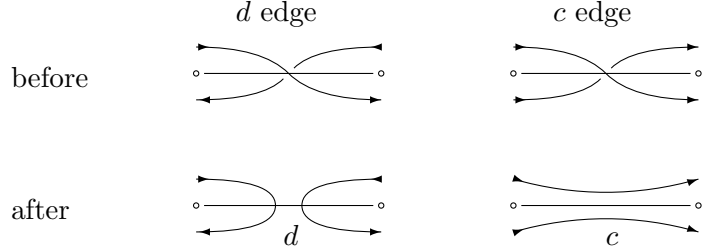
\begin{figure}
\begin{picture}(150,140)(0,0)

\put(10,55){\circle{2}} \put(80,55){\circle{2}}
\put(13,55){\line(1,0){64}} \qbezier(10,65)(35,65)(45,55)
\qbezier(45,55)(55,45)(80,45) \qbezier(10,45)(35,45)(43,53)
\qbezier(47,57)(55,65)(80,65) \put(15,65){\vector(1,0){0}}
\put(11,45){\vector(-1,0){0}} \put(76,65){\vector(-1,0){0}}
\put(80,45){\vector(1,0){0}}

\put(130,55){\circle{2}} \put(200,55){\circle{2}}
\put(133,55){\line(1,0){64}} \qbezier(130,65)(155,65)(165,55)
\qbezier(165,55)(175,45)(200,45) \qbezier(130,45)(155,45)(163,53)
\qbezier(167,57)(175,65)(200,65) \put(135,65){\vector(1,0){0}}
\put(135,45){\vector(1,0){0}} \put(200,65){\vector(1,0){0}}
\put(200,45){\vector(1,0){0}}

\put(10,5){\circle{2}} \put(80,5){\circle{2}}
\put(13,5){\line(1,0){64}} \qbezier(10,15)(40,15)(40,5)
\qbezier(50,5)(50,-5)(80,-5) \qbezier(10,-5)(40,-5)(40,5)
\qbezier(50,5)(50,15)(80,15) \put(43,-10){$d$}
\put(15,15){\vector(1,0){0}} \put(11,-5){\vector(-1,0){0}}
\put(76,15){\vector(-1,0){0}} \put(80,-5){\vector(1,0){0}}

\put(130,5){\circle{2}} \put(200,5){\circle{2}}
\put(133,5){\line(1,0){64}} \qbezier(130,15)(165,5)(200,15)
\qbezier(130,-5)(165,5)(200,-5) \put(165,-10){$c$}
\put(134,14){\vector(3,-1){0}} \put(134,-4){\vector(3,1){0}}
\put(200,15){\vector(3,1){0}} \put(200,-5){\vector(3,-1){0}}

\put(25,75){$\textrm{$d$ edge}$} \put(145,75){$\textrm{$c$ edge}$}
\put(-60,50){$\textrm{before}$} \put(-60,0){$\textrm{after}$}

\end{picture}
\caption{$c$ and $d$ edges before and after resolution of a
crossing}
\label{c and d edges}
\end{figure}

Our next task is to characterize those $\{c,d\}$ labellings of the
edges of an arbitrary plane graph which come from link diagrams using the above process.

Let $E_{c}$ be the set of edges of $\T$ coloured $c,$ and let
$V_{c}$ be the set of vertices of $\T$ adjacent to a $c$ edge.
Let $C=(V_{c},E_{c})$. The graph is not necessarily connected.

The $d$
edges in $\T$ correspond to $c$ edges in $\T^{*}$. Let
$E'_{c}$ be the set of edges of $\T^{*}$ coloured $c,$ and let
$V'_{c}$ be the set of vertices of $\T^{*}$ adjacent to a $c$
edge. Then as before we put $C'=(V'_{c},E'_{c})$, and again $C'$ is not 
necessarily connected.

\begin{theorem}\label{Even c degree}
Each vertex of graph $\T$ has an even number of $c$ edges
adjacent to it (with loops being counted twice).
\end{theorem}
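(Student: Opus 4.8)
The plan is to fix a vertex $v$ of $\T$ and count, modulo $2$, the number of $c$-edges incident to it, by tracking what happens in the link diagram $D$ around the black region $R$ that corresponds to $v$. Recall that each edge of $\T$ at $v$ corresponds to a crossing of $D$ on the boundary of $R$, and that the $c/d$ label records whether resolving that crossing (following the orientation) merges $R$ with a neighbouring black region (label $c$) or separates them (label $d$). So I want to show that the number of boundary crossings of $R$ of merge-type is even.

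\medskip

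First I would describe the boundary of the region $R$ as a closed curve in $S^2$ made up of arcs of the link diagram, meeting at the crossings on $\partial R$. At each such crossing the two strands of $D$ pass through, and because $R$ is one of the two \emph{black} regions at that crossing, the local picture is exactly one of the two configurations in Figure~\ref{c and d edges}: comparing the "before" pictures there, a crossing is a $c$-edge precisely when the two strand-orientations on the boundary of $R$ point the "same way" around $\partial R$ (both with, or both against, a chosen traversal of the boundary curve), and it is a $d$-edge when they are "opposed." Equivalently: as we walk along the boundary curve of $R$, a $d$-crossing is one where the orientation of $D$ agrees with our direction of travel on exactly one of the two incident arcs changes — i.e.\ a $d$-crossing is where the induced orientation on the boundary of $R$ \emph{reverses}, and a $c$-crossing is where it is \emph{preserved}. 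The key step is to verify this dictionary carefully from the splicing/Seifert figures; this is the main obstacle, since it is a purely local case-check but one has to get the conventions (which region is black, which splicing is chosen following the orientation) exactly right.

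\medskip

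Granting that dictionary, the result is immediate: traverse the boundary of $R$ once, starting along some arc with a chosen direction. Each time we pass through a $d$-crossing the "does the traversal agree with the orientation of $D$" status flips, and each time we pass through a $c$-crossing it stays the same. Since we return to the starting arc with the starting direction, the status must have flipped an even number of times, so the number of $d$-crossings on $\partial R$ is even. But the \emph{total} number of crossings on $\partial R$ is the degree of $v$ in $\T$ (loops at $v$ contributing two, since such a crossing touches $R$ twice), and by Proposition~\ref{p.pd2}(2) and the fact that $\T$ is plane, or more simply because $\T$ is bipartite after restricting to... — actually this is unnecessary. Write $\deg(v) = \#\{c\text{-edges at }v\} + \#\{d\text{-edges at }v\}$. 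A parity argument on $\deg(v)$ alone does not finish, so instead I note: the claim is that $\#\{c\text{-edges at }v\}$ is even, which follows once we know $\#\{d\text{-edges at }v\} \equiv \deg(v) \pmod 2$ together with $\deg(v)$ being... again not automatic.

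\medskip

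Let me therefore phrase the finishing step directly in terms of $c$-crossings to avoid this gap. Reconsider the boundary traversal of $R$, but now at a $d$-crossing note that locally the two black regions are separated, whereas at a $c$-crossing they are merged: so if we instead track the "sheet" we are on, a $c$-crossing is where we switch from one local sheet to the other. A cleaner route: after performing the Seifert resolution at \emph{every} crossing on $\partial R$, the region $R$ either survives as a disc (if all incident edges were $d$) or gets glued to its neighbours along the $c$-crossings; in all cases the resulting object is a planar surface, and a standard Euler-characteristic / orientability count on the Seifert-circle picture forces the number of $c$-gluings around $R$ to be even. I would make this precise by observing that the Seifert circles are coherently oriented disjoint circles in $S^2$, so the complementary regions are two-coloured, and a $c$-edge at $v$ corresponds to a crossing where $R$ and an adjacent black region merge into a single Seifert-side region of consistent orientation — walking around that merged region's boundary and counting sign changes again gives evenness. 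The genuine content, and the part I expect to be fiddly, is exactly this local-to-global orientation bookkeeping around $R$; once set up, it is the same "return to start $\Rightarrow$ even number of flips" argument, now applied to whichever local quantity ($d$-count or $c$-count) the conventions make it natural to flip.
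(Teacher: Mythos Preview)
Your overall strategy---traverse the boundary of the black region $R$ corresponding to $v$, track whether your direction of travel agrees with the orientation of $D$, and note that since you return to your starting state the number of flips must be even---is exactly the paper's approach. The argument is short and requires no Euler-characteristic or Seifert-circle machinery.

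The genuine error is that you have the local dictionary reversed. Look again at the two ``before'' pictures in Figure~\ref{c and d edges}. At a $c$-edge the two arcs bounding $R$ near the crossing are oriented \emph{in the same direction} (both pointing, say, rightward), so as you walk along one arc toward the crossing you are with the orientation, and as you leave along the other arc you are against it: the agreement status \emph{flips}. At a $d$-edge the two bounding arcs point in opposite directions, so the agreement status is \emph{preserved} as you pass through. Thus it is the $c$-edges, not the $d$-edges, that cause flips, and the traversal argument gives directly that the number of $c$-edges at $v$ is even. With this correction your first two paragraphs already constitute a complete proof; everything from ``But the \emph{total} number of crossings\ldots'' onward, including the Seifert-circle detour, is an attempt to repair a problem that isn't there.
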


\begin{proof}

Consider a vertex $v$ of $\T$, with edges
$e_{1},e_{2},\ldots,e_{n}$ adjacent to it. This vertex $v$
corresponds to a face $f$ of $D$, and this face $f$ is bounded
by arcs of the link diagram with crossings $c_{1},c_{2},\ldots,c_{n}$
corresponding to the edges $e_{1},e_{2},\ldots,e_{n}$ of $\T$.
Let us take a position on one of those arcs of $D$, between the edges
$e_{n}$ and $e_{1}$ of $\T$, and then walk around the face $f$
until we return to our starting point. We note whether the direction
we walk in agrees or disagrees with the orientation of $D$.

Without loss of generality we may assume that we begin walking in
the direction compatible with the orientation of the link diagram
$D$. We walk until we meet our first edge $e_{1}$. This edge has a
label, either $c$ or $d$. Let us assume that the edge $e_{1}$ is a
$c$ edge; see Figure \ref{walk}. When we cross $e_{1}$ and continue
walking onto the next strand we will walk against the direction of
the orientation of $D$. In order for us to get back where we started
and walk in the direction of the orientation of $D$, the arcs of $D$
will have to change the direction in total an even number of times.
Therefore there must be an even number of $c$ edges at each vertex
$v$ of $\T$.

Now let us assume that the first edge we met, $e_{1}$, is a $d$ edge.
When we cross this edge the next arc of $D$ has to be oriented in a
direction compatible with the direction we are walking in, otherwise
the edge $e_{1}$ would be a $c$ edge. So we keep walking until we
meet a $c$ edge, and then the previous argument applies. If we do not
encounter any $c$ edges then all the edges adjacent to the vertex $v$
are labelled $d$. \end{proof}

\begin{figure}
\begin{picture}(100,120)(0,0)
\thicklines \put(50,50){\circle{2}} \put(43,43){$v$}
\put(53,50){\line(1,0){70}} \put(125,47){$\textrm{$e_{n}$}$}
\put(52,52){\line(1,2){30}} \put(85,115){$\textrm{$e_{1}$}$}
\put(48,52){\line(-1,2){30}} \put(15,115){$\textrm{$e_{2}$}$}
\put(52,48){\line(1,-2){30}} \put(85,-10){$\textrm{$e_{n-1}$}$}
\put(47,50){\line(-1,0){70}} \put(-33,47){$\textrm{$e_{3}$}$}
\put(60,60){$c$} \put(80,95){$\textrm{$c_{1}$}$}
\put(33,60){$d$} \put(12,98){$\textrm{$c_{2}$}$}

\thinlines \qbezier(128,40)(75,65)(75,120)
\qbezier(95,110)(50,85)(5,110) \qbezier(25,120)(25,65)(-30,40)

\put(88,73){\vector(-1,2){0}} \put(75,120){\vector(0,1){0}}
\put(98,112){\vector(3,2){0}} \put(52,97){\vector(1,0){0}}
\put(25,120){\vector(0,1){0}} \put(-28,41){\vector(3,2){0}}
\put(9,108){\vector(3,-2){0}} \put(11,72){\vector(1,2){0}}

\end{picture}
\caption{Walking around vertex $v$ of graph $\T$}
\label{walk}
\end{figure}
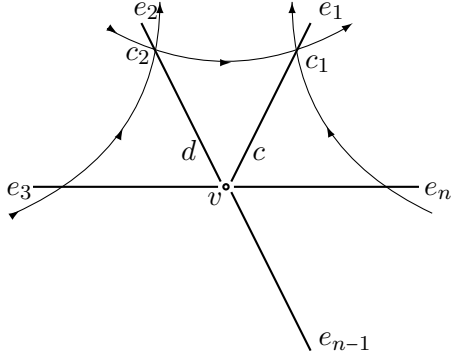

\begin{corollary}\label{C subgraph Eulerian}
Each connected component of $C$ is Eulerian.
\end{corollary}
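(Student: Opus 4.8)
The plan is to deduce this immediately from Theorem~\ref{Even c degree} together with the classical Euler criterion that a connected graph is Eulerian if and only if every one of its vertices has even degree. First I would observe that, by the very definition of $C=(V_c,E_c)$, the degree of a vertex $v$ in $C$ (with loops counted twice, as usual) is exactly the number of $c$-edges of $\T$ incident to $v$; Theorem~\ref{Even c degree} says precisely that this number is even. Hence every vertex of $C$, and therefore every vertex of each connected component of $C$, has even degree.

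Next I would note that $V_c$ was defined to consist only of those vertices of $\T$ that are incident to at least one $c$-edge, so $C$ has no isolated vertices and each of its connected components contains at least one edge. A connected component of $C$ is thus a connected graph all of whose vertices have even degree, and so, by the Euler criterion, it admits a closed walk traversing each of its edges exactly once; that is, it is Eulerian. I expect no real obstacle here: all of the substantive content lies in Theorem~\ref{Even c degree}, and the only points requiring a moment's care are that the chosen notion of ``Eulerian'' permits one to treat connected components individually, and that vertices of $C$ are never isolated so that each component genuinely carries an edge — both of which are immediate from the construction. (The same argument will later apply verbatim to $C'$, since the $d$-edges of $\T$ are the $c$-edges of $\T^*$.)
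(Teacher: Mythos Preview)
Your proposal is correct and is essentially the same argument as the paper's own proof: invoke Theorem~\ref{Even c degree} to see that every vertex of $C$ has even degree, and then apply the classical Euler criterion. The paper states this in one line (also mentioning the equivalent cycle-partition formulation), while you add the small observation that $C$ has no isolated vertices; this extra care is harmless and the substance is identical.
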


\begin{proof}
This comes from the following equivalences: a graph $G$ is Eulerian 
if an only if every vertex of $G$ has an even degree, and if and only 
if the set of edges of $G$ can be partitioned into cycles. (For a 
proof of these equivalences see \cite{Bondy}, for example.)
\end{proof}

\begin{corollary}\label{C' subgraph Eulerian}
Each connected component of $C'$ is Eulerian.
\end{corollary}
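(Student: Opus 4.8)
The plan is to reduce this to the situation already handled in Theorem~\ref{Even c degree} and Corollary~\ref{C subgraph Eulerian}, by exploiting the symmetry between the two checkerboard colourings of a link diagram.

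First I would observe that $\T^*$ is itself a Tait graph of $D$: it is the Tait graph arising from the checkerboard colouring of $D$ in which the roles of black and white are interchanged. Indeed, the two Tait graphs of a link diagram are dual (see Subsection~\ref{ss.tg}), and the dual of a plane graph is again plane, so $\T^*$ is a plane graph obtained from $D$ by exactly the Tait construction applied to the opposite colouring.

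Next I would check that the $\{c,d\}$-colouring placed on the edges of $\T^*$ here — namely, an edge of $\T^*$ is coloured $c$ precisely when the corresponding edge of $\T$ is coloured $d$ — coincides with the $\{c,d\}$-colouring that the Seifert algorithm induces on the Tait graph $\T^*$ directly, via the recipe of Figure~\ref{c and d edges}. The point is that at any crossing of $D$, resolving it along the orientation merges the two incident black regions if and only if it separates the two incident white regions, and separates the two black regions if and only if it merges the two white regions. Hence the contraction/deletion labels for the black-region graph $\T$ and the white-region graph $\T^*$ are swapped crossing-by-crossing, which is exactly the stated correspondence between $c$ edges of $\T^*$ and $d$ edges of $\T$.

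With these two observations in hand, Theorem~\ref{Even c degree} applies verbatim to the Tait graph $\T^*$: every vertex of $\T^*$ is adjacent to an even number of $c$ edges (loops counted twice). Consequently every vertex of $C'=(V'_c,E'_c)$ has even degree, and by the equivalences recalled in the proof of Corollary~\ref{C subgraph Eulerian} (even degree at every vertex $\Leftrightarrow$ the edge set partitions into cycles $\Leftrightarrow$ Eulerian), each connected component of $C'$ is Eulerian. The only step requiring a little care is matching the two descriptions of the $\{c,d\}$-colouring on $\T^*$, but this is immediate from inspecting Figure~\ref{c and d edges}; alternatively one can bypass it entirely by rerunning the walking argument from the proof of Theorem~\ref{Even c degree} around a white region of $D$ in place of a black region, which goes through unchanged.
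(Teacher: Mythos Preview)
Your proposal is correct and follows essentially the same approach as the paper: the paper's proof is the single sentence ``This follows since $C'$ is the Tait graph obtained from $D$ using the other checkerboard colouring,'' and your argument simply unpacks this by noting that $\T^*$ is the Tait graph for the opposite colouring, that the $\{c,d\}$ labels swap under duality, and that Theorem~\ref{Even c degree} and Corollary~\ref{C subgraph Eulerian} therefore apply to $\T^*$. Your extra care in verifying that the swapped labels agree with the labels one would obtain by running the Seifert-resolution recipe directly on $\T^*$ is a worthwhile detail the paper leaves implicit.
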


\begin{proof}
This follows since $C'$ is the Tait graph obtained from $D$ using the 
other checkerboard colouring.

\end{proof}

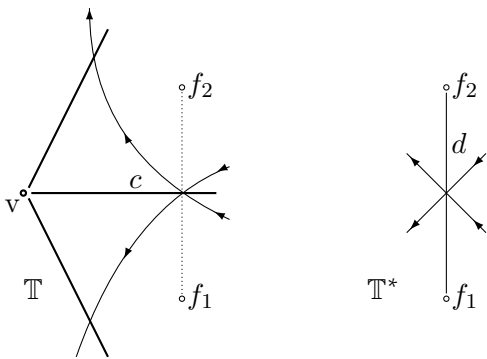
\begin{figure}
\begin{picture}(100,150)(60,0)

\thicklines \put(50,50){\circle{2}} \put(43,43){$\textrm{v}$}
\put(53,50){\line(1,0){70}} \put(90,52){$c$}
\put(52,52){\line(1,2){30}} \put(52,48){\line(1,-2){30}}

\thinlines \qbezier(128,40)(75,65)(75,120)
\qbezier(128,60)(90,45)(70,-12)

\put(88,73){\vector(-1,2){0}} \put(75,120){\vector(0,1){0}}
\put(122,43){\vector(-2,1){0}} \put(123,58){\vector(-2,-1){0}}
\put(88,25){\vector(-1,-2){0}}

\put(110,10){\circle{2}} \put(110,90){\circle{2}}
\put(112,8){$f_{1}$} \put(112,88){$f_{2}$}
\qbezier[50](110,12)(110,50)(110,88)

\put(210,10){\circle{2}} \put(210,90){\circle{2}}
\put(212,8){$f_{1}$} \put(212,88){$f_{2}$}
\qbezier(210,12)(210,50)(210,88) \put(195,35){\line(1,1){30}}
\put(225,35){\line(-1,1){30}} \put(196,36){\vector(-1,-1){0}}
\put(220,40){\vector(-1,1){0}} \put(196,64){\vector(-1,1){0}}
\put(220,60){\vector(-1,-1){0}} \put(212,65){$d$}

\put(50,10){$ \T $} \put(180,10){$
\T^{\ast} $}

\end{picture}
\caption{Relationship between the labels of edges of $\T$
and $\T^{\ast}$.} \label{swap}
\end{figure}

\begin{corollary}\label{Even d cycles}
Any cycle of $\T$ contains an even number of $d$ edges.
\end{corollary}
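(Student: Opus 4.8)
The plan is to recognize that the statement is equivalent to saying the set $E_d$ of $d$-labelled edges of $\T$ lies in the cut space (bond space) of $\T$, since an edge subset of a graph meets every cycle in an even number of edges precisely when it is an edge-disjoint union of edge cuts. For a plane graph this cut-space condition dualizes: $E_d \subseteq E(\T)$ lies in the cut space of $\T$ if and only if the corresponding set of dual edges $\{e^* \mid e \in E_d\} \subseteq E(\T^*)$ lies in the cycle space of $\T^*$, which in turn holds if and only if that dual edge set is an even subgraph of $\T^*$ (every vertex incident to an even number of its edges).

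So first I would identify $\{e^* \mid e\in E_d\}$ with $E'_c$, the edge set of $C'$: by construction the $d$-edges of $\T$ are exactly the $c$-edges of $\T^*$. Next, by Corollary~\ref{C' subgraph Eulerian} each component of $C'$ is Eulerian, so in particular every vertex of $\T^*$ is incident to an even number of edges of $C'$; that is, $C'=\{e^* \mid e \in E_d\}$ is an even subgraph of $\T^*$. By the cut/cycle-space duality above, $E_d$ therefore lies in the cut space of $\T$, and hence meets every cycle of $\T$ in an even number of edges.

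If one prefers to avoid invoking cut/cycle-space duality as a black box, the same argument can be made directly. A simple cycle $Z$ of the plane graph $\T$ is a Jordan curve in $S^2$ that partitions the faces of $\T$ --- equivalently the vertices of $\T^*$ --- into an ``inside'' set $S$ and its complement, and the dual edges $\{e^* \mid e\in Z\}$ are exactly the edges of $\T^*$ with one endpoint in $S$ and one outside. The number of such edges that also lie in $C'$ equals $\sum_{v\in S}\deg_{C'}(v)$ minus twice the number of edges of $C'$ with both endpoints in $S$; since every $\deg_{C'}(v)$ is even by Corollary~\ref{C' subgraph Eulerian}, this number is even. As the edges of $Z$ lying in $C'$ (under $e\leftrightarrow e^*$) are precisely the $d$-edges on $Z$, we conclude that $Z$ contains an even number of $d$-edges; a general element of the cycle space is a symmetric difference of simple cycles, so the conclusion persists.

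The only real subtlety --- and the step I would be most careful about --- is the identification of simple cycles in the plane graph $\T$ with minimal edge cuts in its dual $\T^*$, together with the bookkeeping for loops and bridges (a loop of $\T$ is a cycle of length one, so it must be a $c$-edge, which is consistent with $C'$ containing no bridge because each of its components is Eulerian). Everything else is a short parity computation.
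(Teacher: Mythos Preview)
Your proof is correct. Both your argument and the paper's rest on the same key input---that every vertex of $\T^{*}$ has even $c$-degree (Corollary~\ref{C' subgraph Eulerian}, i.e.\ Theorem~\ref{Even c degree} applied to the dual Tait graph)---but they package it differently. The paper first treats \emph{boundary} cycles of $\T$ (cycles bounding a single face): such a cycle corresponds to the star of a single vertex $v^{*}$ in $\T^{*}$, so its $d$-edges are exactly the $c$-edges at $v^{*}$ and hence even in number; the general case is then obtained by an induction, writing an arbitrary cycle as a mod-$2$ sum of face-boundary cycles. Your cut/cycle-space formulation (and equivalently your direct parity count $\sum_{v\in S}\deg_{C'}(v)-2\,|E(C')\cap\binom{S}{2}|$) collapses this induction into a single step: rather than adding one face at a time you sum over all vertices of $\T^{*}$ on one side of the Jordan curve at once. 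Your version is tidier and makes explicit the linear-algebraic reason the statement holds (the $d$-edge set lies in the cut space because its planar dual lies in the cycle space), while the paper's version is more elementary and self-contained for a reader not used to cut and cycle spaces.
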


\begin{proof}

From Corollary \ref{C subgraph Eulerian} we can see that $C$ can be 
partitioned into a set of cycles. $\T$ is a
plane graph so each cycle of $\T$ separates the sphere into two
regions (by the Jordan curve theorem). If either of these two regions
contains no other edges of $\T$ then we call the cycle a {\it
boundary}.

First we show that a boundary cycle of $\T$ contains an even number
of $d$ edges. Let $C_{1}$ be a boundary cycle of $\T$. We also have 
the dual Tait graph $\T^{*}$ corresponding to the same
link diagram. The cycle $C_{1}$ in $\T$ corresponds to a star
centered at vertex $v^{*}$ in $\T^{*}$. Each edge $e$ of $C_{1}$
has a corresponding edge $e^{*}$ adjacent to $v^{*}$, with the
opposite $\{c,d\}$ label; see Figure \ref{swap}. Applying Theorem
\ref{Even c degree} to $\T^{*}$ we deduce that each vertex $v^{*}$
has an even number of $c$ edges adjacent to it. Hence the cycle $C_{1}$
of $\T$ contains an even number of $d$ edges.

Now let $C_{2}$ be another boundary cycle, whose intersection with 
$C_{1}$ is a path. However many $d$ edges there are in this path, if 
we were to delete the whole path the new face would have an even 
number of $d$ edges. So a (non-boundary) cycle consisting of two 
faces must have an even number of $d$ edges. Inductively, therefore, 
we see that any cycle contains an even number of $d$ edges.

\end{proof}

Note that, in fact, Theorem~\ref{Even c degree} and Corollaries~\ref{C
subgraph Eulerian}, \ref{C' subgraph Eulerian}, and \ref{Even d cycles}
are all equivalent to each other.

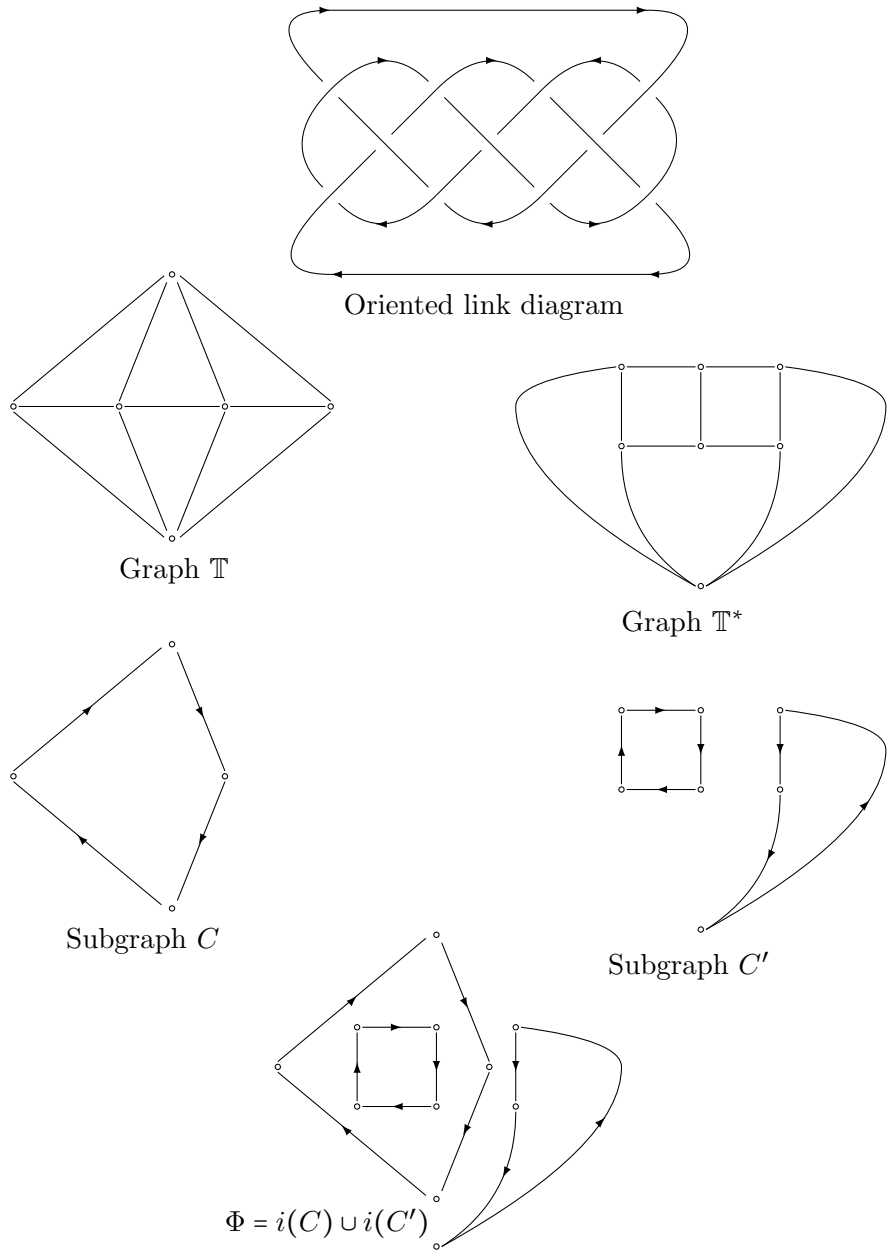
\begin{figure}
\begin{picture}(320,470)(0,0)

\put(100,400){\begin{picture}(300,110)(0,0)
\qbezier(30,80)(10,60)(27,43) \qbezier(33,77)(50,60)(67,43)
\qbezier(30,40)(30,40)(47,57) \qbezier(53,63)(70,80)(70,80)
\qbezier(30,80)(50,100)(67,83) \qbezier(33,37)(50,20)(70,40)
\qbezier(73,77)(90,60)(107,43) \qbezier(70,40)(70,40)(87,57)
\qbezier(93,63)(110,80)(110,80) \qbezier(70,80)(90,100)(107,83)
\qbezier(73,37)(90,20)(110,40) \qbezier(113,77)(130,60)(147,43)
\qbezier(110,40)(110,40)(127,57) \qbezier(133,63)(150,80)(150,80)
\qbezier(110,80)(130,100)(147,83) \qbezier(113,37)(130,20)(150,40)
\qbezier(153,77)(170,60)(150,40) \qbezier(27,83)(0,110)(30,110)
\qbezier(30,110)(90,110)(150,110)
\qbezier(150,110)(180,110)(150,80) \qbezier(30,40)(0,10)(30,10)
\qbezier(30,10)(90,10)(150,10) \qbezier(150,10)(180,10)(153,37)

\put(30,110){\vector(1,0){0}} \put(150,110){\vector(1,0){0}}
\put(87,29){\vector(-1,0){0}} \put(30,10){\vector(-1,0){0}}
\put(150,10){\vector(-1,0){0}} \put(93,91){\vector(1,0){0}}
\put(52,91){\vector(1,0){0}} \put(128,91){\vector(-1,0){0}}
\put(47,29){\vector(-1,0){0}} \put(132,29){\vector(1,0){0}}

\put(35,-5){$\textrm{Oriented link diagram}$}
\end{picture}}

\put(0,300){\begin{picture}(130,90)(0,0) \put(10,60){\circle{2}}
\put(50,60){\circle{2}}
\put(90,60){\circle{2}}\put(130,60){\circle{2}}
\put(70,10){\circle{2}} \put(70,110){\circle{2}}
\put(12,60){\line(1,0){36}} \put(52,60){\line(1,0){36}}
\put(92,60){\line(1,0){36}} \put(50,62){\line(2,5){18}}
\put(50,58){\line(2,-5){18}} \put(130,62){\line(-6,5){57}}
\put(130,58){\line(-6,-5){57}} \put(10,62){\line(6,5){57}}
\put(10,58){\line(6,-5){57}} \put(90,62){\line(-2,5){18}}
\put(90,58){\line(-2,-5){18}}

\put(50,-5){$\textrm{Graph $\T$}$} \end{picture}}

\put(200,300){\begin{picture}(130,90)(0,0)

\put(40,45){\circle{2}} \put(40,75){\circle{2}}
\put(70,45){\circle{2}}\put(70,75){\circle{2}}
\put(100,45){\circle{2}} \put(100,75){\circle{2}}
\put(70,-8){\circle{2}}

\put(40,47){\line(0,1){26}} \put(70,47){\line(0,1){26}}
\put(100,47){\line(0,1){26}} \put(42,75){\line(1,0){26}}
\put(72,75){\line(1,0){26}} \put(42,45){\line(1,0){26}}
\put(72,45){\line(1,0){26}} \qbezier(40,43)(40,10)(68,-8)
\qbezier(100,43)(100,10)(72,-8) \qbezier(38,75)(0,70)(0,60)
\qbezier(0,60)(0,30)(68,-8) \qbezier(102,75)(140,70)(140,60)
\qbezier(140,60)(140,30)(72,-8)

\put(40,-25){$\textrm{Graph $\T^{*}$}$} \end{picture}}

\put(0,160){\begin{picture}(130,90)(0,0) \put(10,60){\circle{2}}
\put(90,60){\circle{2}} \put(70,10){\circle{2}}
\put(70,110){\circle{2}} \put(10,62){\line(6,5){56}}
\put(10,58){\line(6,-5){56}} \put(90,62){\line(-2,5){18}}
\put(90,58){\line(-2,-5){18}} \put(40,87){\vector(1,1){0}}
\put(34,38){\vector(-1,1){0}} \put(82,82){\vector(1,-2){0}}
\put(80,34){\vector(-1,-2){0}}

\put(30,-5){$\textrm{Subgraph $C$}$}
\end{picture}}

\put(200,170){\begin{picture}(130,90)(0,0) \put(40,45){\circle{2}}
\put(40,75){\circle{2}}
\put(70,45){\circle{2}}\put(70,75){\circle{2}}
\put(100,45){\circle{2}} \put(100,75){\circle{2}}
\put(70,-8){\circle{2}} \put(40,47){\line(0,1){26}}
\put(42,75){\line(1,0){26}} \put(70,47){\line(0,1){26}}
\put(42,45){\line(1,0){26}} \put(100,47){\line(0,1){26}}
\qbezier(100,43)(100,10)(72,-8) \qbezier(102,75)(140,70)(140,60)
\qbezier(140,60)(140,30)(72,-8) \put(40,62){\vector(0,1){0}}
\put(57,75){\vector(1,0){0}} \put(70,58){\vector(0,-1){0}}
\put(53,45){\vector(-1,0){0}} \put(100,58){\vector(0,-1){0}}
\put(95,18){\vector(-1,-2){0}} \put(134,41){\vector(1,1){0}}

\put(35,-25){$\textrm{Subgraph $C'$}$}
\end{picture}}

\put(100,50){\begin{picture}(130,90)(0,0)

\put(10,60){\circle{2}} \put(90,60){\circle{2}}
\put(70,10){\circle{2}} \put(70,110){\circle{2}}
\put(10,62){\line(6,5){56}} \put(10,58){\line(6,-5){56}}
\put(90,62){\line(-2,5){18}} \put(90,58){\line(-2,-5){18}}
\put(40,87){\vector(1,1){0}} \put(34,38){\vector(-1,1){0}}
\put(82,82){\vector(1,-2){0}} \put(80,34){\vector(-1,-2){0}}

\put(40,45){\circle{2}} \put(40,75){\circle{2}}
\put(70,45){\circle{2}}\put(70,75){\circle{2}}
\put(100,45){\circle{2}} \put(100,75){\circle{2}}
\put(70,-8){\circle{2}} \put(40,47){\line(0,1){26}}
\put(42,75){\line(1,0){26}} \put(70,47){\line(0,1){26}}
\put(42,45){\line(1,0){26}} \put(100,47){\line(0,1){26}}
\qbezier(100,43)(100,10)(72,-8) \qbezier(102,75)(140,70)(140,60)
\qbezier(140,60)(140,30)(72,-8) \put(40,62){\vector(0,1){0}}
\put(57,75){\vector(1,0){0}} \put(70,58){\vector(0,-1){0}}
\put(53,45){\vector(-1,0){0}} \put(100,58){\vector(0,-1){0}}
\put(95,18){\vector(-1,-2){0}} \put(134,41){\vector(1,1){0}}

\put(-10,-2){$\textrm{$\Phi = i(C) \cup i(C')$}$}
\end{picture}}

\end{picture}
\caption{Construction of the graph $\Phi$}
\label{dupa}
\end{figure}

The underlying abstract graph $\boldsymbol{T}$ of $\T$  is equipped with an
embedding $i:\boldsymbol{T} \rightarrow S^{2}$. This induces an embedding
$\boldsymbol{T}^{*}\rightarrow S^{2}$, which we also denote $i$. Now take
$i(C)\cup i(C')$, and denote the resulting graph
by $\Phi$. 
So $\Phi$ is formed by taking the standard immersion of $\T\cup\T^*$ and then setting
\[ \Phi:= (\T\cup\T^*)  -  (E(C^c) \cup E((C')^c)).  \]
The construction of $\Phi$ and $\Phi^{\du}$ can be seen in
Figures \ref{dupa} and \ref{Phi*}. Note that $\Phi^{\du}$ is not
always a thickened tree. The graph $\Phi^{\du}$ will turn out to be the 
Seifert graph $\bS$.

\begin{figure}
\begin{picture}(320,170)(0,0)

\put(20,20){\begin{picture}(130,90)(0,0)

\put(10,60){\circle{2}} \put(90,60){\circle{2}}
\put(70,10){\circle{2}} \put(70,110){\circle{2}}
\put(10,62){\line(6,5){56}} \put(10,58){\line(6,-5){56}}
\put(90,62){\line(-2,5){18}} \put(90,58){\line(-2,-5){18}}


\put(40,45){\circle{2}} \put(40,75){\circle{2}}
\put(70,45){\circle{2}}\put(70,75){\circle{2}}
\put(100,45){\circle{2}} \put(100,75){\circle{2}}
\put(70,-8){\circle{2}} \put(40,47){\line(0,1){26}}
\put(42,75){\line(1,0){26}} \put(70,47){\line(0,1){26}}
\put(42,45){\line(1,0){26}} \put(100,47){\line(0,1){26}}
\qbezier(100,43)(100,10)(72,-8) \qbezier(102,75)(140,70)(140,60)
\qbezier(140,60)(140,30)(72,-8)


\put(52,57){$\textrm{1}$} \put(75,57){$\textrm{2}$}
\put(92,87){$\textrm{3}$} \put(112,57){$\textrm{4}$}

\put(-5,-5){$\textrm{Graph $ \Phi $}$}
\end{picture}}

\put(220,20){\begin{picture}(130,90)(0,0) \put(0,60){\circle{2}}
\put(40,60){\circle{2}} \put(80,60){\circle{2}}
\put(120,60){\circle{2}}

\put(3,63){\line(1,0){34}} \put(3,61){\line(1,0){34}}
\put(3,59){\line(1,0){34}} \put(3,57){\line(1,0){34}}
\put(43,63){\line(1,0){34}} \put(43,61){\line(1,0){34}}
\put(43,59){\line(1,0){34}} \put(43,57){\line(1,0){34}}
\put(83,60){\line(1,0){34}} \put(83,62){\line(1,0){34}}
\put(83,58){\line(1,0){34}}

\put(-3,65){$\textrm{1}$} \put(37,65){$\textrm{2}$}
\put(77,65){$\textrm{3}$} \put(117,65){$\textrm{4}$}

\put(35,25){$\textrm{Graph $\Phi^{\du}$}$}
\end{picture}}

\end{picture}
\caption{Construction of graph $\Phi^{\du}$}
\label{Phi*}
\end{figure}

\begin{theorem}
\label{phi*Seifert}
$\Phi^{\du}$ is the Seifert graph of the link diagram $D$
corresponding to $\T$.
\end{theorem}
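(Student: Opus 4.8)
My plan is to prove Theorem~\ref{phi*Seifert} by running Seifert's algorithm on $D$ while tracking what it does to the plane graph $\T$, and then recognising the output as $\Phi^{\du}$. Write $D_s$ for the resolved diagram produced by applying the oriented smoothing at every crossing of $D$; it is a disjoint union of circles in $S^{2}$, the Seifert circles, and $\bS=\bS(D)$ is by definition the abstract graph with one vertex for each Seifert circle and one edge for each crossing $c$ of $D$, the edge for $c$ joining the Seifert circle(s) running through the smoothing site of $c$. The graph $\Phi=i(C)\cup i(C')$ likewise has exactly one edge for each crossing of $D$: a $c$-edge $e$ of $\T$ contributes the edge $e$ of $C$, while a $d$-edge $e$ of $\T$ contributes the edge $e^{*}$ of $C'$. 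Since forming $\Phi^{\du}$ does not change the edge set, the edge sets of $\bS$ and of $\Phi^{\du}$ are both canonically identified with the set of crossings of $D$, and I would fix this identification once and for all. What then has to be done is (a) to set up a bijection between the Seifert circles of $D$ and the regions of $\Phi$ in $S^{2}$, and (b) to check that it is compatible with incidence along each edge.

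For (a) the basic observation is the merge/separate dichotomy of Figure~\ref{c and d edges}, read off \emph{both} checkerboard colourings at once: resolving a crossing $c$ merges the two black regions of $D$ at $c$ exactly when its $\T$-edge is a $c$-edge, and merges the two white regions exactly when its $\T$-edge is a $d$-edge (the second statement by applying the first to the other Tait graph, as in the proof of Corollary~\ref{C' subgraph Eulerian}). Consequently the black regions of $D_s$ are obtained from the black regions of $D$ by gluing along the edges of $C$ — so they correspond to the connected components of $C$ together with the black regions meeting no $c$-edge — and dually the white regions of $D_s$ correspond to the components of $C'$ together with the white regions meeting no $d$-edge. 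Now I would exploit that $C$ lies in the union of the black regions of $D$ and $C'$ in the union of the white regions: $\Phi$ can be drawn disjointly from $D_s$ (each $c$-edge, meeting only its own crossing, being pushed through the gap the smoothing opens there, and likewise for the edges of $C'$), so that each Seifert circle, being connected, lies in the interior of a single region of $S^{2}\setminus\Phi$. The assertion to prove is that this assignment ``Seifert circle $\mapsto$ the region of $\Phi$ containing it'' is a bijection; morally, for each crossing $\Phi$ retains exactly one of the two transverse arcs of $\T\cup\T^{*}$ at that site, so that $\Phi$ and $D_s$ jointly fill $S^{2}$ and each region of $\Phi$ carries exactly one Seifert circle.

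Step (b) is then local. Fix a crossing $c$ with $\T$-edge $e$. If $c$ is a $c$-edge, the edge $e$ of $\Phi$ (which lies in $C$) has a region of $\Phi$ on each of its two sides, and the two Seifert arcs running alongside $e$ at the site of $c$ lie in these two regions; these arcs belong to the two Seifert circles through the smoothing site of $c$. If $c$ is a $d$-edge the same holds with the edge $e^{*}$ of $C'$ in place of $e$. Hence the two regions of $\Phi$ incident to the edge labelled $c$ correspond, under the bijection of (a), to the two Seifert circles incident to the edge labelled $c$ in $\bS$. With the edge-set identification fixed earlier this produces a graph isomorphism $\Phi^{\du}\cong\bS(D)$, which is the claim. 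Alternatively, the theorem can be deduced from Proposition~\ref{p.pdt}(4) together with the general identity $H^{A}\cong[(H\cup H^{*})-(A^{c}\cup A^{*})]^{\du}$, taking $H=\T$ and $A=E_{c}$; but the argument above is the one in keeping with the announced plan of recovering $\bS(D)$ directly from $\T(D)$ without building the ribbon graph $\s(D)$.

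The main obstacle is step (a): proving that the regions of $\Phi$ are in bijection with the Seifert circles, equivalently that each region of $S^{2}\setminus\Phi$ contains exactly one Seifert circle. The catch is that $\Phi$ is in general \emph{not} cellularly embedded in $S^{2}$ — some of its complementary regions are annuli rather than discs — so the count is not an Euler-characteristic triviality; one has to understand how the cycles comprising $C$ interleave inside the sphere with those comprising $C'$, and how that interleaving mirrors the nesting of the Seifert circles of $D_s$. Pinning down that topological picture, and disposing of the degenerate cases caused by loops and by black or white regions meeting no $c$- or $d$-edge, is where the real work lies.
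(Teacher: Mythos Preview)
Your outline is the paper's own strategy, and you have correctly located the one real obstacle: the bijection in step~(a). The point you are missing, and which dissolves half of that obstacle, is that \emph{surjectivity of the vertex map need not be proved topologically}. You already have the canonical identification of $E(\bS)$ and $E(\Phi^{\du})$ with the crossings of $D$. Once you know that the map $h$ sending a Seifert circle to the region of $\Phi$ containing it is injective and adjacency-preserving (your step~(b)), the induced edge map must agree with that identification and is therefore a bijection $m:E(\bS)\to E(\Phi^{\du})$. Surjectivity of $h$ then falls out formally: given $\lambda\in V(\Phi^{\du})$, choose any edge $\lambda\mu$ incident to it, pull back by $m^{-1}$ to an edge $uv$ of $\bS$, and observe that $h(u)h(v)=\lambda\mu$, so $\lambda\in\{h(u),h(v)\}$.

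Thus only injectivity of $h$ --- no two Seifert circles share a region of $\Phi$ --- requires a direct argument, and this is much lighter than the full interleaving analysis you anticipate. The paper disposes of it combinatorially: if two Seifert circles lay in one region, each boundary edge of that region would cross one of the ``dotted'' arcs emerging from the two circles, so could be labelled $1$ or $2$ accordingly; walking round the boundary the label could switch at only two vertices, and those two vertices are forced to coincide, a contradiction. Your own heuristic --- that at each crossing $\Phi$ retains exactly one of the two transverse arcs of $\T\cup\T^{*}$, so that $\Phi$ and $D_{s}$ together cellulate $S^{2}$ --- is an equally valid route to injectivity. Either way, once injectivity is in hand the edge-counting shortcut above finishes the proof without any separate treatment of annular regions, nesting of circles, or the degenerate cases you list.
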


\begin{proof}
We construct a mapping $h$ from the vertex set $V(\bS)$ of the
Seifert graph to the vertex set $V(\Phi^{\du})$. We draw on the
same diagram the link diagram $D$ together with the corresponding
graphs $\T$ and $\T^{*}$, so that for each crossing of $D$
there are two corresponding edges, one from $\T$ and one from
$\T^{*}$. While resolving the crossings of the link diagram we
remove the $d$ edges of $\T$ and $\T ^{*}$, so we have one
edge for each crossing of $D$. Now we observe that the Seifert
circles do not intersect any of the edges, and therefore given a
Seifert circle we have a face of $\Phi$, and hence a vertex of
$\Phi^{\du}$ as required.

Suppose there are two Seifert circles in the same face of $\Phi$. Each
edge of this face crosses one of the dotted edges emerging from the
Seifert circles, and we can label the edges $1$ or $2$ according to
which Seifert circle they correspond to. There are just two vertices
$a$ and $b$ in the boundary of the face which have both $1$ and $2$
edges. These vertices are in the same face of $U$, so $a=b$, and hence
the two circles must have been in different faces of $\Phi$.

So $h$ is injective.

We also know that if two vertices $v$ and $w$ in $\bS$ are adjacent,
then the corresponding Seifert circles are joined by a dotted edge,
which implies that the faces in $\Phi$ share an edge and hence that
$h(v)$ and $h(w)$ are adjacent in $\Phi^{\du}$.

This gives us a mapping $m:E(\bS)\rightarrow E(\Phi^{\du})$, which must
be a bijection, because any edge of $\Phi^{\du}$ comes ultimately from
a specific crossing in $D$ and hence from a specific edge of $\bS$.

Finally, we show that $h$ is a graph isomorphism. We already know that
it is injective. Now let $\lambda$ be in $V(\Phi^{\du})$. Choose a vertex
$\mu$ adjacent to $\lambda$, denote $m^{-1}(\lambda\mu)$ by $e$, and
suppose that $e=uv$, for $u,v\in V(\bS)$. Then $h(u)h(v)=\lambda\mu\in
E(\Phi^{\du})$, which means that $\lambda=h(u)$ or $\lambda=h(v)$, and
hence $h$ is surjective.
\end{proof}

It is well known that Seifert graphs are bipartite. Here we see that 
this fact is a consequence of the Eulerian structures in Tait graphs.
\begin{lemma}
$\Phi^{\du}$ is bipartite. \label{bip}
\end{lemma}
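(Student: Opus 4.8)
The plan is to derive the lemma from the single fact that $\Phi$ is an \emph{Eulerian} graph embedded in the sphere, i.e.\ to run the argument behind the classical equivalence ``a plane graph is bipartite $\iff$ its dual is Eulerian'' in a form that survives the passage to the non-cellular dual $\Phi^{\du}$. Two preliminary observations prepare this. First, $\Phi=i(C)\cup i(C')$ is genuinely embedded in $S^2$, not merely immersed: in the standard immersion of $\T\cup\T^*$ an edge of $\T$ crosses only its partner edge of $\T^*$, which carries the opposite $\{c,d\}$-label, so an edge of $C$ (a $c$-edge of $\T$) can only cross a $d$-edge of $\T^*$, and those are exactly the edges deleted in forming $\Phi$. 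Second, every vertex of $\Phi$ has even degree: since $V(C)\subseteq V(\T)$ and $V(C')\subseteq V(\T^*)$ are disjoint, $\Phi$ is the disjoint union of the abstract graphs $C$ and $C'$, each component of which is Eulerian by Corollaries~\ref{C subgraph Eulerian} and~\ref{C' subgraph Eulerian}.

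Next I would isolate and prove the general statement: \emph{if $H$ is a graph embedded in $S^2$ with every vertex of even degree, then $H^{\du}$ is bipartite.} Fix a base region $R_0$ of $H$ and colour each region $R$ by the parity $f(R)\in\{0,1\}$ of the number of transverse crossings with $H$ of a generic arc from a point of $R_0$ to a point of $R$. The whole point is that $f$ is well defined, which reduces to showing that every generic closed curve $\gamma$ in $S^2$ meets $H$ in an even number of points. For this, decompose $E(H)$ into edge-disjoint graph cycles (possible precisely because all degrees are even); each such cycle is a simple closed curve in $S^2$, hence by the Jordan curve theorem bounds a disc, and $\gamma$ enters and leaves that disc equally often, so $\gamma$ crosses the cycle evenly; summing over the cycles gives the claim. (Equivalently: $H$ is a mod-$2$ boundary in $S^2$, hence has vanishing mod-$2$ intersection number with the cycle $\gamma$.) Granting well-definedness, if an edge $e$ of $H$ separates two regions $R$ and $R'$, then prolonging an arc for $f(R)$ once across $e$ shows $f(R')=f(R)+1$; in particular $R\neq R'$ (consistent with the fact that even graphs are bridgeless), so $f$ is a proper $2$-colouring of $V(H^{\du})$. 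Taking $H=\Phi$ now proves $\Phi^{\du}$ bipartite.

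The only step requiring genuine care is the parity statement for closed curves, equivalently the well-definedness of $f$; the rest is bookkeeping about arcs and regions. A reader preferring to quote the literature could instead invoke the classical bipartite/Eulerian duality together with its routine extension to non-cellularly embedded graphs, but the direct argument above makes visible exactly where Corollaries~\ref{C subgraph Eulerian} and~\ref{C' subgraph Eulerian} are used, which is the phenomenon the lemma is meant to highlight.
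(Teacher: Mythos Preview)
Your proof is correct. Both you and the paper reduce the lemma to the classical fact that a plane graph with all vertices of even degree has a bipartite dual, invoking Corollaries~\ref{C subgraph Eulerian} and~\ref{C' subgraph Eulerian} for the even-degree hypothesis. The paper's two-line argument works on the dual side: vertices of $\Phi$ of even degree give face-boundaries of $\Phi^{\du}$ of even length, and every cycle of $\Phi^{\du}$ is a mod-$2$ sum of face-cycles, hence even. You work on the primal side, constructing the $2$-colouring of the regions of $\Phi$ directly via a parity-of-crossings function and checking well-definedness with the cycle decomposition plus Jordan curve theorem. These are the two standard proofs of the same classical equivalence. What your approach buys is explicitness about two points the paper glosses over: that $\Phi$ is genuinely embedded rather than merely immersed (your observation about $c$-edges of $\T$ crossing only $d$-edges of $\T^*$), and that the argument goes through for the non-cellular dual $\Phi^{\du}$ without needing to identify ``faces'' of a possibly non-cellular embedding.
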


\begin{proof}
Each component of the (disjoint) graphs $C$ and $C'$ is Eulerian, and 
so each vertex of $\Phi$ has even degree. Hence each face of $\Phi^{\du}$
has an even number of edges.

Any cycle in $\Phi^{\du}$ can be formed by adding face-cycles mod 2.
Hence the result. \end{proof}

Now we establish the conditions on a connected plane graph $G$ which 
make it the Tait graph of a link diagram.

\begin{theorem}
\label{cd colourings giving links}
If, in a given $\{c,d\}$ colouring of the edges of the connected 
plane graph $G$, each component of $C$ is Eulerian, and each 
component of $C'$ is Eulerian, then $G$ is the Tait graph of an 
oriented link diagram, the orientation coming from the $\{c,d\}$ 
colouring.
\end{theorem}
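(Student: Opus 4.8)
The plan is to convert the $\{c,d\}$-colouring into an orientation: first realise $G$ as the Tait graph of a link shadow, then orient that shadow using the two Eulerian hypotheses. To begin, apply the medial construction (or Proposition~\ref{p.taitdual} with $A=\emptyset$): put a crossing at each vertex of the medial graph $M(G)\subset S^{2}$ and checkerboard colour so that the black regions are $V(G)$ and the white regions are $V(G^{*})$. The over/under information at the crossings is irrelevant here (it affects neither $\T$ nor the $\{c,d\}$-labels), so choose it arbitrarily; call the resulting unoriented diagram $D_{0}$. Then the crossings of $D_{0}$ are in natural bijection with $E(G)$ and $\T(D_{0})\cong G$, so it remains only to orient $D_{0}$ so that the resolution procedure of Section~\ref{s.ab} recovers the prescribed $\{c,d\}$-labelling.

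Next I would extract two $2$-colourings from the hypotheses. Let $E_{c}$ and $E_{d}$ be the sets of $c$- and $d$-edges. The statement ``each component of $C$ is Eulerian'' means every vertex of $G$ meets an even number of $c$-edges, i.e.\ $E_{c}$ lies in the cycle space of $G$; since $C'$ is built from the edges of $G^{*}$ dual to $E_{d}$ and the cycle space of $G^{*}$ is the cut space of $G$, ``each component of $C'$ is Eulerian'' means $E_{d}$ lies in the cut space of $G$ (equivalently, every cycle of $G$ meets $E_{d}$ evenly, cf.\ Corollary~\ref{Even d cycles}). As $G$ is connected, this yields $W\subseteq V(G)$ with $E_{d}=\delta_{G}(W)$ and $W^{*}\subseteq V(G^{*})$ with $E_{c}=\delta_{G^{*}}(W^{*})$, hence $2$-colourings $\phi\colon V(G)\to\{+,-\}$ of the black regions and $\psi\colon V(G^{*})\to\{+,-\}$ of the white regions (unique up to swapping colours). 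Because $E_{c}$ and $E_{d}$ partition $E(G)$, at every crossing of $D_{0}$ \emph{exactly one} of the two events ``the two incident black regions differ in $\phi$'' and ``the two incident white regions differ in $\psi$'' occurs.

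Finally I would orient $D_{0}$ by a local rule: orient each arc so that its adjacent black region $B$ lies on the left exactly when $\phi(B)\psi(W)=+$, where $W$ is the adjacent white region and $\{+,-\}$ is read multiplicatively. The heart of the argument is then a single-crossing computation (cf.\ Figure~\ref{c and d edges}): around a crossing with black regions $B_{1},B_{2}$ and white regions $W_{1},W_{2}$, the four oriented arcs assemble into two through-strands precisely when $\phi(B_{1})\phi(B_{2})\psi(W_{1})\psi(W_{2})=-$, which is exactly the ``exactly one colour change'' condition above; so the local rule really does define an orientation of a genuine link diagram $D$. The same computation shows that the orientation-compatible (Seifert) smoothing at the crossing of $e=B_{1}B_{2}$ separates $B_{1}$ from $B_{2}$ iff $\psi(W_{1})=\psi(W_{2})$ iff $\phi(B_{1})\neq\phi(B_{2})$ iff $e\in E_{d}$; hence $e$ is labelled $d$ exactly when $e\in E_{d}$ and $c$ exactly when $e\in E_{c}$, as required, and $\T(D)\cong G$. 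I expect the single-crossing computation to be the only real obstacle: one must pin down conventions for ``left'', for the cyclic order of the regions round a crossing, and for which smoothing is the Seifert one, and then run through the colour patterns. The conceptual point being verified is that the two Eulerian hypotheses are precisely what is needed both to make the local orientations fit together and to make the resolution reproduce the labels --- dropping either one makes the construction fail at some crossing.
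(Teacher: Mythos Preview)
Your proof is correct and takes a genuinely different route from the paper's. The paper works geometrically through the auxiliary graph $\Phi^{\du}$: it argues that the faces of $\Phi$ are discs or annuli, so $\Phi^{\du}$ is bipartite with the property that removing any cut vertex leaves exactly two pieces; it then assigns transverse orientations to the edges of $\Phi^{\du}$ block by block, and finally reconstructs $D$ by placing crossings along $C$ and $C'$ and splicing arcs according to these orientations. You bypass $\Phi^{\du}$ entirely by translating the two Eulerian hypotheses, via planar cycle--cut duality, into $2$-colourings $\phi$ of the black regions and $\psi$ of the white regions, and then orienting every arc by the single local rule ``$B$ on the left iff $\phi(B)\psi(W)=+$''. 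The crossing computation you outline does go through (and is exactly as fiddly as you anticipate): the product condition $\phi(B_{1})\phi(B_{2})\psi(W_{1})\psi(W_{2})=-$ is precisely what makes both strands coherent, and the Seifert smoothing then separates the black regions iff $\phi(B_{1})\neq\phi(B_{2})$, i.e.\ iff $e\in E_{d}$. The paper's route has the advantage of tying the construction directly to $\Phi^{\du}$, the central object of Theorems~\ref{phi*Seifert} and~\ref{t.sc}; your route is more self-contained and makes transparent \emph{why} both Eulerian conditions are needed---one produces $\phi$, the other $\psi$, and the partition $E=E_{c}\sqcup E_{d}$ is exactly the parity condition forced at each crossing. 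Your argument is also more explicit: the paper's block-by-block orientation and final splicing are somewhat informal, whereas once your single-crossing check is written out the construction is completely determined.
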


\begin{proof}
Given our graph $G$ we construct $C$, $C'$, $\Phi$, and $\Phi^{\du}$ as 
above. We first show that the faces of $\Phi$ must be either discs or 
annuli, by arguing that no two components of $C'$ can lie in the same 
face of $i(C)$.

Consider a face $f$ of $i(C)$, drawn in $G$. If it is a face of $G$, 
then it is a disc. If it is not a face of $G$, then it contains other 
edges and vertices of $G$, all the edges being marked $d$. These edges 
and vertices divide $f$ into faces of $G$. But $G$ is connected, so 
we can walk between any two of these faces via other such faces. 
Therefore, the part of $C'$ lying in $f$ must be connected, as 
required.

In $\Phi^{\du}$, this means that the deletion of any cut vertex splits 
$\Phi^{\du}$ into exactly two components. We also know, from Lemma 
\ref{bip}, that $\Phi^{\du}$ is bipartite.

Choose a block $B_{1}$ of $\Phi^{\du}$. It is bipartite, and has no cut 
vertices. Choose an arbitrary transverse orientation of an edge $e^{\du}$ in 
$B_{1}$, and use this to determine a clockwise and anti-clockwise 
orientation at each of the ends of $e^{\du}$. Any other vertex in 
$B_{1}$ will be clockwise if it is an even distance from a clockwise 
vertex, and anti-clockwise otherwise. This is consistent because 
$B_{1}$ is bipartite, and so it has no odd cycles.

Next, back in $\Phi^{\du}$, let $p$ be a cut vertex of $B_{1}$. (If 
$B_{1}$ has no cut vertex, then $B_{1}=\Phi^{\du}$ and we move to the 
next step.) By our observation above, there is a next block: call it 
$B_{2}$ and orient it as in Figure~\ref{blocks}. Proceed in this way 
until all the edges of $\Phi^{\du}$ have been oriented.
Finally, transfer this orientation back to the edges of $\Phi$. 

We will now describe how an oriented link diagram $D$, with the 
property that $G$ is its Tait graph, can be recovered from the 
decorated graph $\Phi$. On the plane embedding of $G$ draw $C$ and 
$C'$ using the embedding $i$ described above. (So $C$ is a subgraph 
of $G$ and $C'$ is a subgraph of the standard embedding of $G^*$.) 
Form the link diagram $D$ from this as follows. Place a crossing at 
the centre of each  edge in $C$, and wherever an edge of $C'$ 
intersects an edge of $G$. Orient each of these crossings so that the 
arcs are directed in the same direction as the edge of $C$ or $C'$ on 
which it lies. It remains to connect these crossings up in a way that 
is constant with their orientations. First connect all crossings that 
lie on $C$ by following the faces of $C$, and all crossings that lie 
on $C'$ by following the faces of $C'$. This can be done consistently 
because of the way that we assigned transverse orientations to the 
edges in each block of $\Phi^{\du}$. This gives an oriented link 
diagram $D'$.  Finally, For each half-edge $h$ of $G$ with one end on a 
vertex $v$ of $C\subset G$ and the other on an intersection point of  
$G$ and $C'$, splice together the arcs of the link diagram $D'$ as 
indicated in Figure~\ref{f.splice}. This is consistant with the 
orientation of $D'$ because of the way that the transverse 
orientation moved between the blocks of $\Phi^{\du}$. The resulting 
link diagram is $D$.



\end{proof}

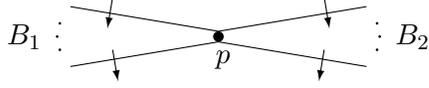
\begin{figure}
\begin{picture}(100,100)(-50,0)
    
\put(10,62){\line(-6,1){56}}
\put(10,58){\line(-6,-1){56}}

\put(-50,65){\circle*{1}}
\put(-51,60){\circle*{1}}
\put(-50,55){\circle*{1}}

\put(-70,57){$B_{1}$}

\put(10,60){\circle*{4}}
\put(9,50){$p$}

\put(10,62){\line(6,1){56}}
\put(10,58){\line(6,-1){56}}

\put(70,65){\circle*{1}}
\put(71,60){\circle*{1}}
\put(70,55){\circle*{1}}

\put(77,57){$B_{2}$}

\put(-30,75){\vector(-1,-6){2}}
\put(-30,55){\vector(1,-6){2}}

\put(50,75){\vector(1,-6){2}}
\put(50,55){\vector(-1,-6){2}}

\end{picture}

\caption{Orientation of neighbouring blocks}
\label{blocks}
\end{figure}

\begin{figure}
\begin{tabular}{ccccc}
\includegraphics[width=30mm]{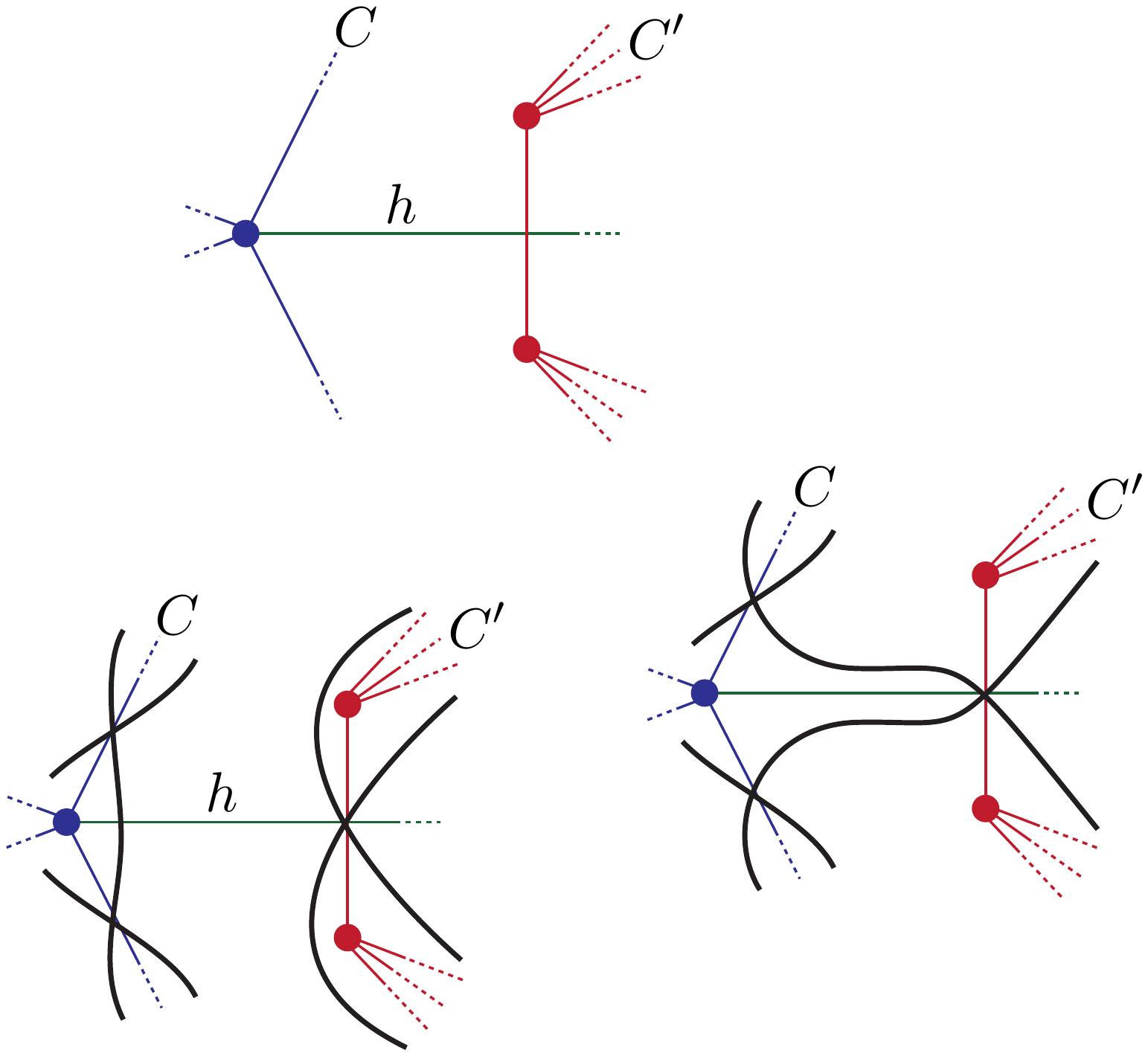} & \hspace{1cm} & 
\includegraphics[width=30mm]{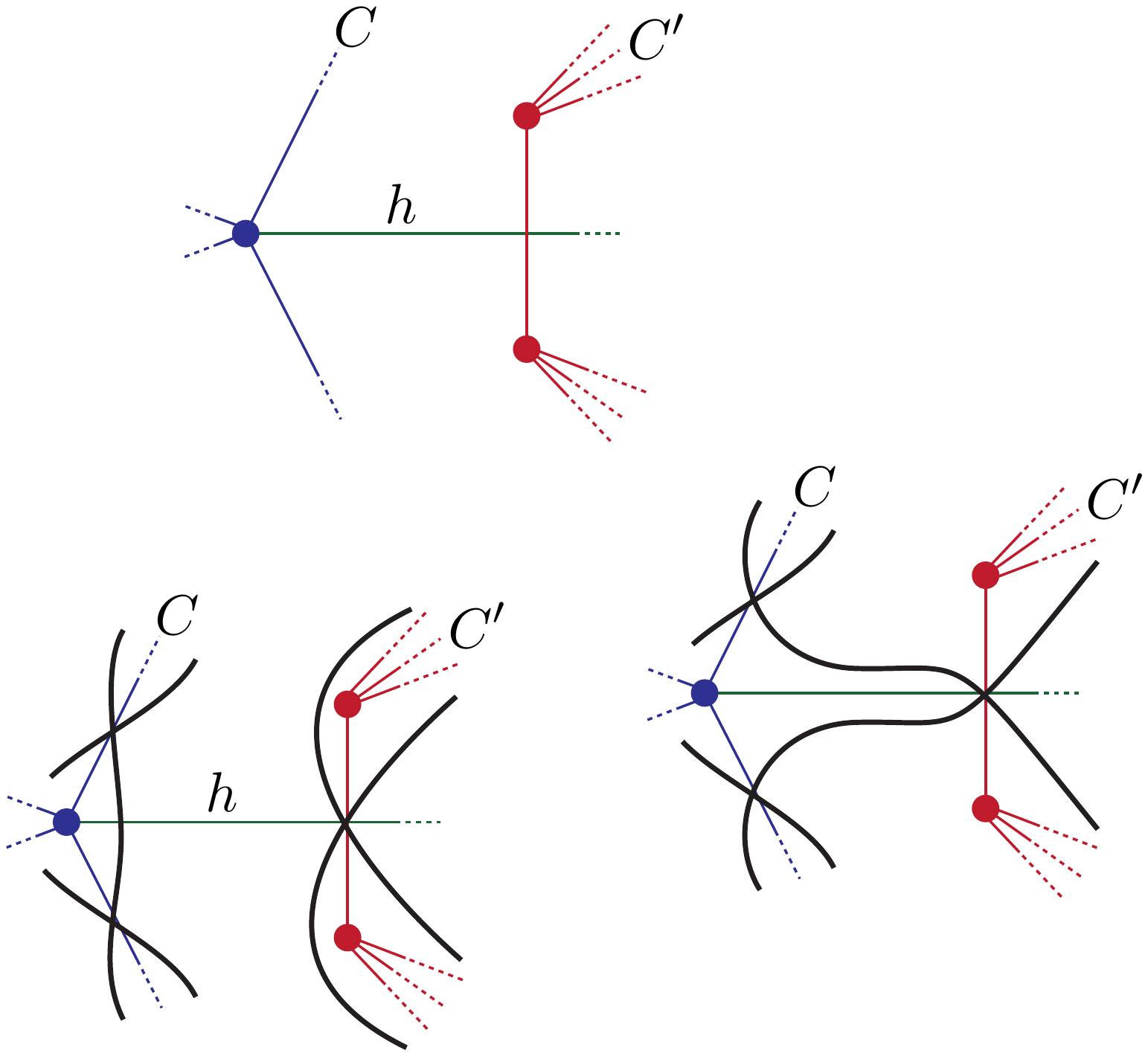}  &  \hspace{1cm} & 
\includegraphics[width=30mm]{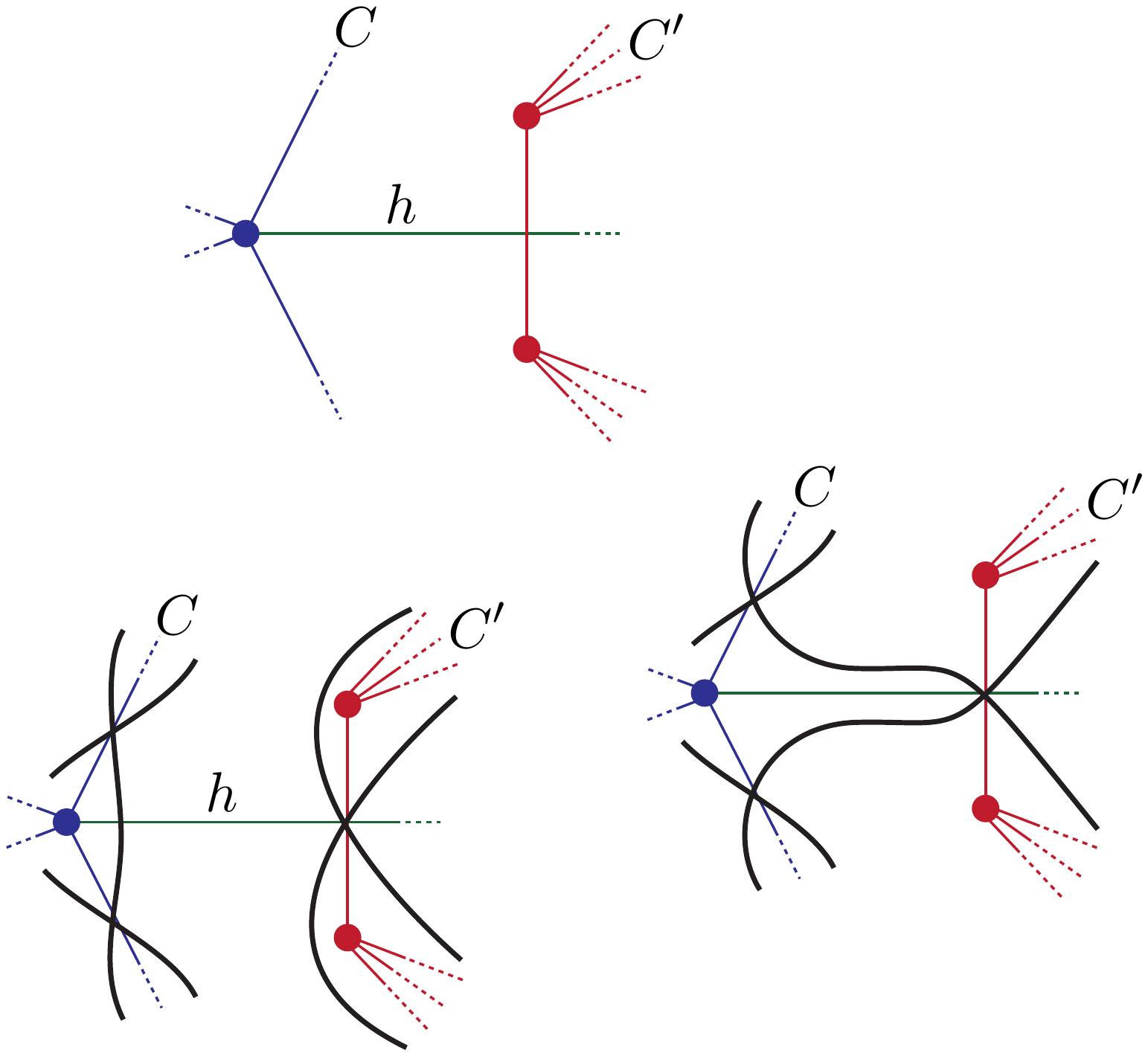} \\
In $G\cup i(C')$. && In the link $D'$. && Obtaining $D$.
\end{tabular}
\caption{Splicing together arcs in the proof of Theorem~\ref{cd 
colourings giving links}.}
\label{f.splice}
\end{figure}



















We can now prove our main result of this section, which was a 
characterization of Seifert graphs.
\begin{proof}[Proof of Theorem~\ref{t.sc}]
Suppose that $H$ is a plane graph and each component of $(H\cup H^*) 
- (A^c \cup A^*)$ is Eulerian. Then, by Theorem~\ref{cd colourings 
giving links}, it follows that $H$ is a Tait graph of a link diagram 
$D$ and that $A$ is its set of $c$ edges. By the definition of $\Phi$ 
and Theorem~\ref{phi*Seifert}, it follows that $G$ is the Seifert 
graph of $D$.

The converse follows immediately from Theorem~\ref{phi*Seifert}.
\end{proof}

\begin{remark}
Theorem~\ref{phi*Seifert} provides a way to obtain the Seifert graph 
$\boldsymbol{S}(D)$ of a link diagram $D$. By Proposition~\ref{p.pdt}, the 
Seifert graph can also be obtained as the underlying abstract graph 
of the partial dual $\T_{\sigma}(D)^A$, where $A$ is the set of all 
$(\pm,\pm)$ edges of  $\T_{\sigma}(D)$. Thus we have 
\begin{equation}\label{e.remark}
G^A \cong [(G\cup G^{\star}) - (A^c \cup A^{\star})]^{\du},
\end{equation}
when $G$ is a Tait graph and $A$ is its set of $c$ edges.
It is natural to ask whether Equation~\eqref{e.remark} holds for any 
graph $G$ and any $A\subseteq E(G)$. In \cite{HM11}, using a 
characterization of partial duals from \cite{Mo4}, the first two 
authors show that Equation~\eqref{e.remark} does indeed hold for all 
$G$ and $A$. This is significant, as it provides a way to construct 
partial dual graphs without having to compute the corresponding 
ribbon graphs. It also provides another illustration of the fruitful 
connection between knot theory and graph theory.
\end{remark}


\section{The graphs of $r$-fold parallels}
This section is concerned with the structure of the (ribbon) graphs 
of $r$-fold parallels of  link diagrams. Forming a parallel of a link 
is a fundamental and important operation.  We begin by describing the 
operation on Tait graphs that corresponds to taking the $r$-fold 
parallel of a link diagram. This construction leads to an interesting 
sequence of Tait graphs. We then apply our results on Tait graphs to 
examine the ribbon graphs of $r$-fold parallels. In particular, we 
will determine the genus of the ribbon graph $\G(D_r)$ of an 
$r$-fold parallel of $D$ in terms of the ribbon graph $\G(D)$ of $D$.

\subsection{Tait graphs of $r$-fold parallels}\label{ss.rfold}

Let $D$ be a link diagram. Its {\em $r$-fold parallel}, $D_{r}$, is 
the diagram in which each link component of $D$ is replaced by $r$ 
copies, all parallel in the plane, each copy repeating the over- and 
under-crossing behaviour of the original link diagram. 
Figure~\ref{f.t12}  shows what happens to a crossing in the 2-fold 
parallel, together with the corresponding edges in the Tait graphs. 

\setlength{\unitlength}{0.5pt}

\setlength{\unitlength}{0.5pt}
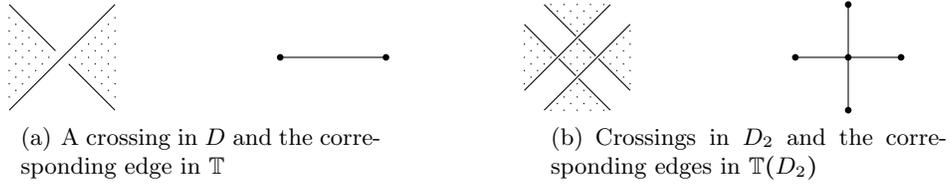
\begin{figure}
    \subfigure[A crossing in $D$ and the corresponding edge in $\T$]
    { \hspace{17mm}
      \begin{picture}(50,50)(150,0)

\multiput(110,70)(10,0){1}{\circle*{1}}
\multiput(105,65)(10,0){1}{\circle*{1}}
\multiput(100,60)(10,0){2}{\circle*{1}}
\multiput(95,55)(10,0){2}{\circle*{1}}
\multiput(90,50)(10,0){3}{\circle*{1}}
\multiput(85,45)(10,0){3}{\circle*{1}}
\multiput(80,40)(10,0){4}{\circle*{1}}
\multiput(85,35)(10,0){3}{\circle*{1}}
\multiput(90,30)(10,0){3}{\circle*{1}}
\multiput(95,25)(10,0){2}{\circle*{1}}
\multiput(100,20)(10,0){2}{\circle*{1}}
\multiput(105,15)(10,0){1}{\circle*{1}}
\multiput(110,10)(10,0){1}{\circle*{1}}

\multiput(30,70)(10,0){1}{\circle*{1}}
\multiput(35,65)(10,0){1}{\circle*{1}}
\multiput(30,60)(10,0){2}{\circle*{1}}
\multiput(35,55)(10,0){2}{\circle*{1}}
\multiput(30,50)(10,0){3}{\circle*{1}}
\multiput(35,45)(10,0){3}{\circle*{1}}
\multiput(30,40)(10,0){4}{\circle*{1}}
\multiput(35,35)(10,0){3}{\circle*{1}}
\multiput(30,30)(10,0){3}{\circle*{1}}
\multiput(35,25)(10,0){2}{\circle*{1}}
\multiput(30,20)(10,0){2}{\circle*{1}}
\multiput(35,15)(10,0){1}{\circle*{1}}
\multiput(30,10)(10,0){1}{\circle*{1}}
	
\put(30,0){\line(1,1){80}}
\put(110,0){\line(-1,1){35}}
\put(65,45){\line(-1,1){35}}

\put(235,40){\line(1,0){80}}
\put(235,40){\circle*{5}}
\put(315,40){\circle*{5}}

\end{picture} \hspace{17mm}
    }
    \hspace{2cm}
    \subfigure[Crossings in $D_{2}$ and the corresponding edges in 
    $\T(D_{2})$]
    {  \hspace{15mm}
      \begin{picture}(100,100)(150,0)

\multiput(110,60)(10,0){1}{\circle*{1}}
\multiput(105,55)(10,0){1}{\circle*{1}}
\multiput(100,50)(10,0){2}{\circle*{1}}
\multiput(95,45)(10,0){2}{\circle*{1}}
\multiput(90,40)(10,0){3}{\circle*{1}}
\multiput(95,35)(10,0){2}{\circle*{1}}
\multiput(100,30)(10,0){2}{\circle*{1}}
\multiput(105,25)(10,0){1}{\circle*{1}}
\multiput(110,20)(10,0){1}{\circle*{1}}

\multiput(30,60)(10,0){1}{\circle*{1}}
\multiput(35,55)(10,0){1}{\circle*{1}}
\multiput(30,50)(10,0){2}{\circle*{1}}
\multiput(35,45)(10,0){2}{\circle*{1}}
\multiput(30,40)(10,0){3}{\circle*{1}}
\multiput(35,35)(10,0){2}{\circle*{1}}
\multiput(30,30)(10,0){2}{\circle*{1}}
\multiput(35,25)(10,0){1}{\circle*{1}}
\multiput(30,20)(10,0){1}{\circle*{1}}

\multiput(50,80)(10,0){5}{\circle*{1}}
\multiput(55,75)(10,0){4}{\circle*{1}}
\multiput(60,70)(10,0){3}{\circle*{1}}
\multiput(65,65)(10,0){2}{\circle*{1}}
\multiput(70,60)(10,0){1}{\circle*{1}}

\multiput(70,50)(10,0){1}{\circle*{1}}
\multiput(65,45)(10,0){2}{\circle*{1}}
\multiput(60,40)(10,0){3}{\circle*{1}}
\multiput(65,35)(10,0){2}{\circle*{1}}
\multiput(70,30)(10,0){1}{\circle*{1}}

\multiput(70,20)(10,0){1}{\circle*{1}}
\multiput(65,15)(10,0){2}{\circle*{1}}
\multiput(60,10)(10,0){3}{\circle*{1}}
\multiput(55,5)(10,0){4}{\circle*{1}}
\multiput(50,0)(10,0){5}{\circle*{1}}

\put(30,15){\line(1,1){65}}
\put(45,0){\line(1,1){65}}

\put(95,0){\line(-1,1){24}}
\put(69,26){\line(-1,1){13}}
\put(54,41){\line(-1,1){24}}

\put(110,15){\line(-1,1){24}}
\put(84,41){\line(-1,1){13}}
\put(69,56){\line(-1,1){24}}

\put(235,40){\line(1,0){40}}
\put(275,40){\line(1,0){40}}
\put(275,40){\line(0,1){40}}
\put(275,0){\line(0,1){40}}

\put(235,40){\circle*{5}}
\put(275,40){\circle*{5}}
\put(315,40){\circle*{5}}
\put(275,0){\circle*{5}}
\put(275,80){\circle*{5}}

\end{picture} \hspace{15mm}
    } 
  \caption{$D$, $T(D)$, $D_2$ and $T(D_2)$ locally at a crossing 
  (shown without signs). }
  \label{f.t12}
  \end{figure}

Let $\T(D)$ be the Tait graph of our link diagram $D$. Our aim is to 
determine the Tait graph $\T(D_r)$ of the $r$-fold parallel $D_r$ 
of $D$ directly from $\T(D)$. Thus we need to find a construction of 
the graph $\T_r(D)$ from $\T(D)$ that completes the commutative 
diagram:

\begin{equation}\label{com}
\begin{array}{ccc}
D & \longrightarrow & \T(D)\\
\downarrow & & \downarrow \\
D_{r} & \longrightarrow & \T(D_{r})=\T_{r}(D) 
\end{array}.
\end{equation}
In order to achieve this we will start with $\T(D)$, construct 
$\T_{2}(D)$, and then find a general recurrence
relationship between $\T_{r+1}(D)$ and $\T_{r}(D)$.

For $\T_{2}(D)$ we need to define the overlay 
product of $\T$ and $\T^{\ast}$, denoted 
$\T\overlay\T^{\ast}$. In fact  the overlay product 
is defined for any pair of plane graphs $G$ and $H$ that have the 
property that the vertices of each lie in the faces of the other, so 
we give the general definition.

\begin{definition}
Let $G$ and $H$ be abstract graphs, and suppose they are plane, and
therefore equipped with embeddings $i:G\rightarrow S^{2}$ and 
$j:H\rightarrow S^{2}$. Suppose further that the vertices of each lie 
in the faces of the other. The {\em overlay product}, $G\overlay H$, 
is the image $i(G) \cup j(H)$ with vertices added wherever an edge of 
$G$ intersects an edge of $H$.

Every edge $e$ in $G\overlay H$ arises from an edge $e'$ in $G$ or 
$H$. If $e'$ has an edge weight, then $e$ is assigned the same edge 
weight as $e'$.
\end{definition}
In the case of the plane graph $G$ and its dual $G^{\ast}$, the 
embedding $j$ is induced from $i$, and the condition that the 
vertices of each lie in the faces of the other is satisfied by 
construction.

\setlength{\unitlength}{0.5pt}
\begin{figure}
\begin{picture}(150,100)(200,0)

\put(100,25){\circle*{5}}
\put(50,110){\circle*{5}}
\put(150,110){\circle*{5}}
\put(100,25){\line(-50,85){50}}
\put(100,25){\line(50,85){50}}
\qbezier{(50,110),(100,155),(150,110)}
\qbezier{(50,110),(100,65),(150,110)}

\put(250,110){\circle*{5}}
\put(250,60){\circle*{5}}
\put(250,0){\circle*{5}}
\put(250,60){\line(0,1){50}}
\qbezier{(250,0),(200,30),(250,60)}
\qbezier{(250,0),(300,30),(250,60)}
\put(290,110){\oval(80,110)[tl]}
\put(290,82.5){\oval(80,165)[r]}
\put(250,0){\line(1,0){40}}

\put(450,25){\circle*{5}}
\put(400,110){\circle*{5}}
\put(500,110){\circle*{5}}
\put(450,25){\line(-50,85){50}}
\put(450,25){\line(50,85){50}}
\qbezier{(400,110),(450,155),(500,110)}
\qbezier{(400,110),(450,65),(500,110)}
\put(450,110){\circle*{5}}
\put(450,60){\circle*{5}}
\put(450,0){\circle*{5}}
\put(450,60){\line(0,1){50}}
\qbezier{(450,0),(400,30),(450,60)}
\qbezier{(450,0),(500,30),(450,60)}
\put(490,110){\oval(80,110)[tl]}
\put(490,82.5){\oval(80,165)[r]}
\put(450,0){\line(1,0){40}}
\put(435.5,49.5){\circle*{5}}
\put(464.5,49.5){\circle*{5}}
\put(450,88){\circle*{5}}
\put(451,132.5){\circle*{5}}

\end{picture} 
\caption{For the Figure Eight knot, the graphs $\T$,  $\T^{\ast}$, and $\T_2=\T 
\overlay \T^{\ast}$}
\label{ThetaonThetastar}	    
\end{figure}

\begin{definition}
Let $\T$ be a Tait graph, then we define $\T_{r}$ inductively by $\T_1:=\T$ and 
\[\T_{r} := \T_{r-1}^{\ast} \overlay \T.\]
\end{definition}

We note that $\T_{r}$ can be defined in this way since, at each step, the vertices of 
$\T_{r}^{\ast}$ are the faces of $\T_{r}$, 
which are subsets of the faces of $\T$, and the vertices 
of $\T$ are also vertices of $\T_{r}$, and hence 
lie in faces of $\T_{r}^{\ast}$.


%

The following theorem shows that the overlay product is the construction required to complete the commutative diagram~\eqref{com}.
\begin{theorem}\label{t.cable} Let $D$ be a link diagram, then 
    $\T_{r}(D)=\T_{1}(D_{r}).$
\end{theorem}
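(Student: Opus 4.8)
The statement is that $\T_r(D) = \T_1(D_r)$, i.e.\ the inductively defined graph $\T_{r}$ (built from $\T = \T(D)$ by repeated overlay with the Tait graph) really is the Tait graph of the $r$-fold parallel. The natural strategy is induction on $r$. The base case $r=1$ is the definition $\T_1 = \T$, which coincides with $\T(D_1) = \T(D)$ since $D_1 = D$. For the inductive step I would like to show that if $\T_{r-1}(D) = \T(D_{r-1})$, then $\T(D_r) = \T(D_{r-1})^{*} \overlay \T(D)$. The key geometric observation is that $D_r$ is obtained from $D_{r-1}$ by adding one more parallel strand all the way around; I want to track what this does to the Tait graph locally at each crossing and globally along the strands of the diagram.

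\textbf{Key steps in order.} First I would fix a checkerboard colouring of $D$ and hence a consistent colouring of all the $D_j$, so that the Tait graph constructions are all referring to the same ``side.'' Second, I would analyse what happens locally at a crossing, as indicated in Figure~\ref{f.t12}: when one passes from $D_{r-1}$ to $D_r$ at a crossing, the single crossing of $D$ becomes a $(2\times\cdots)$ grid of crossings, and the new strand contributes a new row/column of crossings; correspondingly, in the Tait graph one inserts a new vertex and new edges parallel to the existing edge, exactly matching how the overlay with $\T$ adds a vertex and a copy of each edge of $\T$ along the faces. Third, I would analyse the ``between crossings'' behaviour: along an arc of $D$ between two crossings, the new parallel strand separates two regions, and in the Tait graph this corresponds to where a dual edge of $\T_{r-1}^{*}$ crosses an edge of $\T$ — precisely the subdivision points introduced by the overlay product. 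Putting the local pictures together, I would argue that the vertices of $\T(D_r)$ are exactly the black regions of $D_r$, which are in bijection with (black regions of $D_{r-1}$) $\sqcup$ (new regions created by the added strand), and that the latter correspond to the faces of $\T_{r-1} = \T(D_{r-1})$, i.e.\ to the vertices of $\T_{r-1}^{*}$; similarly the edges of $\T(D_r)$ split into those coming from $\T(D_{r-1})$ and a fresh copy of $E(\T)$, matched up correctly with the overlay. Finally, I would verify that the incidences (which vertices each edge joins) on both sides agree, completing the isomorphism $\T(D_r) \cong \T_{r-1}^{*}\overlay\T = \T_r(D)$.

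\textbf{Main obstacle.} The genuinely delicate point is the global bookkeeping of the new regions and the verification that the embedding data match: one must check that the new strand, run all the way around $D_{r-1}$, carves out regions exactly in bijection with the faces of $\T_{r-1}$ and that these are glued to the old graph via the overlay rule (adding a vertex wherever an edge of $\T_{r-1}^{*}$ meets an edge of $\T$). In particular one has to be careful that the induced embedding $j$ of $\T_{r-1}^{*}$ used in the overlay is the one genuinely realized by the new parallel strand in the plane, and that the relevant ``vertices of each lie in the faces of the other'' hypothesis — already noted to hold after the definition of $\T_r$ — is the same condition that makes the geometric picture well defined. I expect the bulk of the write-up to be this figure-driven local-to-global argument, with the crossing-level picture of Figure~\ref{f.t12} doing most of the work and the rest being careful matching of vertices, edges, and incidences on the two sides.
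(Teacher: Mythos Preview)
Your inductive framework is the same as the paper's: reduce to showing $\T(D_r)=\T(D_{r-1})^{*}\overlay\T(D)$. But your proposed decomposition of the vertices and edges of $\T(D_r)$ is wrong, and this is a genuine gap, not just an imprecision.

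You write that the black regions of $D_r$ are in bijection with ``(black regions of $D_{r-1}$) $\sqcup$ (new regions created by the added strand)'', the latter corresponding to vertices of $\T_{r-1}^{*}$. That cannot be right: the overlay $\T_{r-1}^{*}\overlay\T$ has vertex set $V(\T_{r-1}^{*})\cup V(\T)\cup(\text{crossing points})$, and $V(\T_{r-1})$ does not appear in it at all. Concretely, the black regions of $D_r$ near a crossing are the \emph{white} regions of $D_{r-1}$ together with one new diagonal of regions coming from $\T(D)$; the old black regions of $D_{r-1}$ become white in $D_r$. This colour swap is exactly why the \emph{dual} $\T_{r-1}^{*}$ enters the recursion, and your ``add one strand and keep the old black regions'' picture misses it. Your edge count confirms the problem: your decomposition gives $(r-1)^{2}e+e$ edges, whereas $\T(D_r)$ has $r^{2}e$, the discrepancy of $2(r-1)e$ being the subdivisions at overlay intersection points.

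The paper's proof sidesteps the global bookkeeping entirely with a local ``chess board'' argument. At a crossing of $D$, the Tait graph of $D_{r-1}$ looks like an $r\times r$ checkerboard with the long diagonal black; for $D_r$ it is $(r+1)\times(r+1)$, still with the long diagonal black. The black squares of the larger board are obtained from the \emph{white} squares of the smaller one (that is the dual $\T_{r-1}^{*}$) by inserting a new long diagonal (that is overlaying $\T$). This is the whole inductive step. If you want to salvage your ``add one parallel strand'' approach, the fix is to track carefully that the added strand flips the checkerboard colouring on the side away from the existing diagram, so that the surviving vertices come from $\T_{r-1}^{*}$, not $\T_{r-1}$; but at that point you are essentially redoing the paper's local argument in more cumbersome global language.
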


\begin{proof}
We use induction on $r$. The base case, $r=2$, follows easily by considering a single crossing, as in Figure~\ref{f.t12}. 

Suppose  that
$    \T_{r-1}(D)=\T_{1}(D_{r-1})$.
Then
$   \T_{r}(D)
    =\left(\T_{r-1}(D)\right)^{*}\overlay\T_{1} 
    =\left(\T_{1}(D_{r-1})\right)^{*}\overlay\T_{1}$.
We need to show that
\begin{equation}
\label{chessboard}
   \left(\T_{1}(D_{r-1})\right)^{*}\overlay\T_{1}=\T_{1}(D_{r}).
\end{equation}
Consider a crossing in $D$. In $\T_{1}(D_{r-1})$ it yields an 
$r\times r$ chess board, with the long diagonal black. 
 In $\T_{1}(D_{r})$ there is a new row and column, so the 
chess board now has size $(r+1)\times (r+1)$. The long diagonal is 
still black, however.
The pattern of black squares in the $(r+1)\times (r+1)$ board can be 
obtained by taking the pattern of white squares in the $r\times r$ 
board, making them black, and then inserting a new long diagonal.
In terms of our graphs, this means taking the dual of 
$\T_{1}(D_{r-1})$, and then the overlay product with 
$\T_{1}(D)$ inserts the new long diagonal, giving Equation~\eqref{chessboard}. 
\end{proof}

The following two results show that the structure of the graphs 
$\T_{r}(D)$ is quite tightly constrained.
\begin{lemma}\label{l6}
Let $D$ be a link diagram. When $r$ is even all the faces of 
$\T_{r}(D)$ are squares. In the case when $r$ is odd, if $f$ is a 
face of $\T(D)$ and $f_r$ is the subgraph of $\T_{r}(D)$ that is 
contained in $f$, then $f_r$ contains a copy of $f$ and all other 
faces of $\T_{r}(D)$ are squares.
\end{lemma}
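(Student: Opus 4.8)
The plan is to use the inductive description $\T_r = \T_{r-1}^* \overlay \T$ together with the local picture of a crossing established in Theorem~\ref{t.cable}, namely that each crossing of $D$ contributes an $r\times r$ chessboard to $\T_r$, with the long diagonal black (so that the black squares are the vertices of $\T_r$ and the faces are the white squares, together with whatever lies outside the grids). First I would observe that the faces of $\T_r$ split into two types: the faces that lie inside one of these chessboard grids, and the faces of $\T_r$ that are ``inherited'' from the faces of the original $\T$. For the faces lying inside a grid, this is a purely combinatorial statement about the $r\times r$ chessboard: the white squares strictly interior to the grid are genuine $4$-valent faces (squares), and I would check that at the grid boundary the white cells along the edges of the board fuse correctly with the ambient regions so that no spurious non-square face is created --- this uses the fact that the edges entering the grid from the four ``corners'' of the crossing are arranged exactly as in Figure~\ref{f.t12}.

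The main point is then to track the inherited faces. Here I would argue by induction on $r$ using $\T_{r} = \T_{r-1}^* \overlay \T$. Taking the dual $\T_{r-1}^*$ turns the faces of $\T_{r-1}$ into vertices and vice versa; overlaying with $\T$ subdivides each edge of $\T_{r-1}^*$ at the points where it crosses an edge of $\T$, and reinstates (the ``new long diagonal'' of) $\T$ inside each grid. The effect on a face $f$ of $\T$ is that the portion $f_r$ of $\T_r$ inside $f$ is obtained from $f_{r-1}^{\,*}$-material by one more overlay step. When $r$ is even I would show that this process has, at the last step, eliminated the last non-square face: the parity argument is that the ``central'' inherited face alternates between a copy of $f$ (for $r$ odd) and being broken up into squares (for $r$ even), since each application of dualize-then-overlay swaps the roles of ``face of $\T$'' and ``vertex of $\T$'' inside the grid. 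When $r$ is odd the same bookkeeping shows exactly one copy of $f$ survives inside $f_r$, with everything else a square.

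The step I expect to be the main obstacle is making the boundary behaviour of the chessboard grids precise --- i.e.\ verifying that the white cells on the edge of an $r\times r$ grid, together with the four arcs leaving the crossing, really do assemble into squares (for $r$ even) or into squares plus one copy of $f$ (for $r$ odd), rather than producing some larger face that straddles two grids or a grid and an inherited region. I would handle this by analysing a neighbourhood of a single crossing together with the arcs joining it to adjacent crossings, using the explicit local model of $D_r$ and reading off $\T_1(D_r)$ there; the commutative diagram~\eqref{com} and Theorem~\ref{t.cable} guarantee this local analysis suffices. Once the local picture at each crossing and along each arc of $D$ is pinned down, the global statement follows by assembling these local pictures, and the parity dichotomy (all squares when $r$ even, one copy of each $f$ plus squares when $r$ odd) drops out of the induction above.
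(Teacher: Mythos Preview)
Your approach differs from the paper's in an instructive way. You organise the argument around the local $r\times r$ chessboard at each crossing, splitting faces into ``inside a grid'' versus ``inherited from a face of $\T$'', and you correctly flag the boundary behaviour between adjacent grids as the main obstacle. The paper sidesteps that obstacle entirely by never invoking the chessboard picture: it works purely with the recursion $\T_{k+1} = \T_k^{\ast} \overlay \T$ and classifies each face of $\T_{k+1}$ according to whether it is a whole face of $\T_k^{\ast}$ (not crossed by $\T$) or a proper part of one. In the first case the face corresponds to a vertex of $\T_k$, which is either a face of $\T_{k-1}^{\ast}$ (pushing the induction back two steps, which is where the even/odd alternation you describe comes from), a degree-$4$ overlay vertex (giving a square), or a vertex of $\T$ --- this last possibility being vacuous, since such a face is necessarily subdivided by $\T$. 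In the second case the face of $\T_k^{\ast}$ is cut by an edge or a vertex of $\T$, and one checks directly that the pieces are squares.

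The advantage of the paper's route is that the gluing problem you worry about never arises: nothing is localised to a crossing, so there is no ``edge of the grid'' to analyse. Your route could be made to work, but completing the boundary analysis would amount to re-deriving the same case split in a more cumbersome coordinate system; the cleaner move is to drop the chessboard model for this lemma and argue directly from the overlay definition.
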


\begin{proof}
The statements are true for $r=1$ and $r=2$. We use induction on $r$. 
Suppose that the statements are true for all $r$ less than $k$.
Consider a face of $\T_{k+1}$. It is either
a face of $\T_{k}^{\ast}$, or
a part of a face of $\T_{k}^{\ast}$.
In the case where  a face of $\T_{k+1}$ is
a face of $\T_{k}^{\ast}$, we can immediately go back one more step. Each face of 
$\T_{k}^{\ast}$ corresponds to a vertex of $\T_{k}$, which 
must be either
(a) a face of $\T_{k-1}^{\ast}$;
(b) a new vertex of degree four formed by the overlay; or
(c) a vertex of $\T$.
In cases (a) and (b) we can deduce that the face of 
$\T_{k+1}^{\ast}$ must either be a face of $\T_{k-1}^{\ast}$ or a 
square. Case (c) does not arise, because if the 
face of $\T_{k}^{\ast}$ had come from a vertex of 
$\T$ then it would have been subdivided.

This leads us to the second case where   a face of $\T_{k+1}$ is
a part of a face of $\T_{k}^{\ast}$. A face of $\T_{k}^{\ast}$ may be 
subdivided in two ways. It either has a vertex of $\T$ in 
it, in which case it is subdivided into squares, or it has an edge of 
$\T$ crossing it. Here, too, it is subdivided into squares.
Hence the result. \end{proof}

\begin{figure}
\begin{picture}(50,50)(150,0)

\put(0,70){\circle*{5}}
\put(35,130){\circle*{5}}
\put(105,130){\circle*{5}}
\put(130,70){\circle*{5}}
\put(70,15){\circle*{5}}
\put(0,70){\line(35,60){35}}
\put(35,130){\line(1,0){70}}
\put(105,130){\line(25,-60){25}}
\put(130,70){\line(-60,-55){60}}
\put(70,15){\line(-70,55){70}}

\put(250,70){\circle*{5}}
\put(285,130){\circle*{5}}
\put(355,130){\circle*{5}}
\put(380,70){\circle*{5}}
\put(320,15){\circle*{5}}
\put(250,70){\line(35,60){35}}
\put(285,130){\line(1,0){70}}
\put(355,130){\line(25,-60){25}}
\put(380,70){\line(-60,-55){60}}
\put(320,15){\line(-70,55){70}}

\put(320,80){\circle*{5}}
\put(320,80){\line(0,1){50}}
\put(320,130){\circle*{5}}
\put(320,80){\line(20,9){47}}
\put(367,101){\circle*{5}}
\put(320,80){\line(9,-11){30.5}}
\put(350.5,43){\circle*{5}}
\put(320,80){\line(-9,-10){34.5}}
\put(285.5,42){\circle*{5}}
\put(320,80){\line(-21,9){51}}
\put(268.5,101.5){\circle*{5}}

\end{picture} 
\caption{A face of $\T$, and the corresponding faces of 
$\T \overlay \T^{\ast}$}
\label{squarefaces}	    
\end{figure}

The overlay product construction, together with Euler's equation, 
lead to the following formulae for the number of edges, faces and vertices of $T_r(D)$  which will be of use later.
\begin{lemma}\label{t.edges} Let $D$ be a link diagram, $f$, $e$, and $v$ be the number of faces, edges and vertices of
graph $\T(D)$, respectively, and let $f_{r}$, $e_{r}$, and $v_{r}$ be the 
number of faces, edges and vertices of $\T_{r}(D)$, respectively.
For $r$ even we have:
\[  e_{r} = r^{2}e, \quad\quad f_{r} = \frac{r^{2}e}{2},   \quad \quad v_{r} = 2+\frac{r^{2}e}{2};\]
and for $r$ odd: 
\[ e_{r} = r^{2}e, \quad \quad f_{r} = f+\frac{e(r^{2}-1)}{2},  \quad \quad v_{r} = v+\frac{e(r^{2}-1)}{2}. \]
\end{lemma}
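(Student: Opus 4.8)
The plan is to prove all six formulae by a single induction on $r$, using the recursion $\T_{r+1} = \T_r^* \overlay \T$ together with Lemma~\ref{l6} (which controls the shapes of the faces) and Euler's formula $v_r - e_r + f_r = 2$ for the connected plane graph $\T_r(D)$. The key observation is that the structure Lemma~\ref{l6} gives us is strong enough that we never need to know the global shape of $\T_r$, only a count of the edges, together with the fact that all (or all but finitely many) faces are quadrilaterals.

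First I would establish the edge count $e_r = r^2 e$ for all $r$. Passing from $\T_r$ to $\T_{r+1} = \T_r^* \overlay \T$: overlaying adds one new vertex for each point where an edge of $\T_r^*$ crosses an edge of $\T$. Working locally at each original crossing of $D$, as in Figure~\ref{f.t12} and the chessboard description in the proof of Theorem~\ref{t.cable}, passing from the $r \times r$ board to the $(r+1)\times(r+1)$ board inserts a new row and a new column, and the edge count per crossing grows from $r^2$ to $(r+1)^2$; summing over all $e$ crossings of $D$ gives $e_{r+1} = (r+1)^2 e$. Alternatively, and perhaps cleaner, one can observe directly from the $r$-fold parallel picture that each crossing of $D$ becomes an $r \times r$ grid of crossings in $D_r$, so $D_r$ has $r^2 e$ crossings, and hence $\T(D_r) = \T_r(D)$ has $r^2 e$ edges by Theorem~\ref{t.cable}; this sidesteps any induction for the edge count.

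Next I would pin down the face count. For $r$ even, Lemma~\ref{l6} says every face of $\T_r$ is a quadrilateral, so summing $4$ over all faces double-counts edges: $4 f_r = 2 e_r$, giving $f_r = e_r/2 = r^2 e/2$. For $r$ odd, Lemma~\ref{l6} says that inside each face $f$ of $\T = \T_1$ there sits one copy of $f$ itself and all remaining faces are quadrilaterals; there are $f$ such preserved faces (one per face of $\T$), each with the same number of boundary edges as the corresponding face of $\T$, so the edge--face incidence count reads $\left(\sum_{\text{faces of }\T} \deg f\right) + 4\bigl(f_r - f\bigr) = 2 e_r$, i.e. $2e + 4(f_r - f) = 2 r^2 e$, which solves to $f_r = f + e(r^2 - 1)/2$. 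Finally, in both parities $v_r$ is forced by Euler: $v_r = 2 - f_r + e_r$, which for $r$ even gives $v_r = 2 + r^2 e/2$, and for $r$ odd gives $v_r = 2 - f - e(r^2-1)/2 + r^2 e = 2 - f + e + e(r^2-1)/2$; using $v - e + f = 2$ for $\T$, i.e. $2 + e - f = v$, this is exactly $v_r = v + e(r^2-1)/2$, as claimed.

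The main obstacle is bookkeeping the incidence count correctly in the odd case: one must be careful that the "preserved" faces are in genuine bijection with the faces of $\T$ and that their boundary lengths are unchanged (so that $\sum \deg f$ over the preserved faces of $\T_r$ equals $\sum \deg f = 2e$ over the faces of $\T$), and that every other face is a true quadrilateral counted once — both of which are precisely what Lemma~\ref{l6} delivers. With that in hand, the computation is just solving two linear equations (edge--face incidence and Euler) in the two unknowns $f_r, v_r$, and no separate induction beyond Lemma~\ref{l6} and Theorem~\ref{t.cable} is needed.
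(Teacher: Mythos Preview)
Your proposal is correct and follows essentially the same route as the paper: the edge count comes directly from $D_r$ having $r^2 e$ crossings, the face count comes from the edge--face incidence relation using Lemma~\ref{l6} (all faces squares when $r$ is even; one preserved copy of each original face plus squares when $r$ is odd), and the vertex count is then forced by Euler's formula. The paper's computation in the odd case, writing $f_r = x + f$ and then $2e_r = 4x + 2e$, is exactly your incidence equation $2e + 4(f_r - f) = 2e_r$ in slightly different notation.
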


\begin{proof}
If the original link diagram $D$ had $e$ crossings then the $r$-fold 
parallel of $D$ will have $r^{2}e$ crossings. Since the number of 
edges of $\T_{r}$ is equal to the number of crossings of the link 
$D_{r}$, we have
$e_{r}=r^{2}e$. 

If $r$ is even all the faces are squares, and so
$4f_{r}=2e_{r}$,
and hence
$f_{r}=\frac{r^{2}e}{2}$.
Now Euler's equation gives
$v_{r}=2+\frac{r^{2}e}{2}$.

If $r$ is odd we count the faces in more detail. Let $f_{r,n}$ be the 
number of faces in $\T_{r}$ bounded by $n$ edges. Then, by Lemma~\ref{l6}, 
\[ f_{r}
 = x + f_{1,1} + f_{1,2} + f_{1,3} + f_{1,4} + f_{1,5} + \dots 
 = x + f, \]
for some $x$. Now let us count the edges.
\[ 2e_{r}=f_{r,1}+ 2f_{r,2}+3f_{r,3}+4f_{r,4}+5f_{r,5}+\dots 
 =4x+ f_{1,1} +2f_{1,2}+3f_{1,3}+4f_{1,4}+5f_{1,5}+\dots 
 =4x+2e.\]
Therefore
$x=\frac{e(r^{2}-1)}{2}$,
and so
$f_{r}=f+\frac{e(r^{2}-1)}{2}$.
Finally, we use Euler's equation again to deduce that
$v_{r}=v+\frac{e(r^{2}-1)}{2}$, as required. 
\end{proof}

We conclude this section with some notation and a technical  result that we require later.
Given any plane graph $G$ with $r$-fold overlay $G_r$, there is a 
natural map
\[\varphi_{r}:E(G_r)\rightarrow E(G_{r-1}{}^*)\cup E(G)\]
induced by the definition of the overlay product: the 
edge $e\in E(G_r)$ lies in an edge 
$\varphi_{r}(e)$ of $G_{r-1}{}^*$ or $G$. We call the map 
$\varphi_{r}$ the {\em projection} of $E(G_r)$ onto 
$E(G_{r-1}{}^*)\cup E(G)$.

When the link diagram $D$ is not alternating, its Tait graph 
$\T(D)$ will be signed. Evidently, given an edge 
$e\in\T(D)$, the corresponding edge $e^{*}\in\T(D)^{*}$ will have the 
opposite sign.
The sign of an edge $e\in E(G_r)$ is the same as the sign of 
$\varphi_{r}(e)$, whether this is in $E(G_{r-1}{}^*)$ or $E(G)$. Of 
course, if $\varphi_{r}(e)\in E(G_{r-1}{}^*)$ then the sign of $e$ is 
opposite to the sign of $\varphi_{r}(e)^{*}\in E(G_{r-1})$.
The following lemma tells us that if we take two adjacent edges in 
$G_r$ with the property that one edge arises from an edge in 
$G_{r-1}{}^*$ and the other from an edge in $G$, then the two edges 
are of different signs.
\begin{lemma}
\label{differentsigns}
Let $G$ be a signed plane graph, $G_r$ denote its $r$-fold overlay, 
and $\varphi_{r}$ be the projection of $E(G_r)$ onto 
$E(G_{r-1}{}^*)\cup E(G)$. If $e\in\varphi_{r}^{-1}(E(G_{r-1}{}^*))$ 
and $f\in\varphi_{r}^{-1}(E(G))$ are adjacent, they have different 
signs.
\end{lemma}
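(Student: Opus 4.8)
The plan is to trace the sign of an edge back through the recursive structure of the overlay product, exploiting the fact that the recursion $\T_r = \T_{r-1}^* \overlay \T$ alternates between a dual graph and a copy of the original graph. First I would set up the situation locally: by Lemma~\ref{l6}, away from the (at most one) face of $\T$ that survives when $r$ is odd, every face of $G_{r-1}$ is a square, and the overlay product with $G$ subdivides such a face by inserting the diagonal edges that come from $G$. So near the edge $e$ (coming from $G_{r-1}^*$) and the adjacent edge $f$ (coming from $G$), the local picture is a small piece of the chessboard pattern described in the proof of Theorem~\ref{t.cable}, and I would argue about signs on that piece.

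Next I would unwind the definition of the sign assignment. The sign of $f$, since $\varphi_r(f)\in E(G)$, is simply the sign of the corresponding edge of $G$; this is one of the ``diagonal'' edges of the chessboard block. The sign of $e$, since $\varphi_r(e)\in E(G_{r-1}^*)$, is opposite to the sign of $\varphi_r(e)^* \in E(G_{r-1})$; and now I would iterate: $\varphi_r(e)^*$ is an edge of $G_{r-1}$, which itself is $G_{r-2}^* \overlay G$, so its sign is either that of an edge of $G$ (with one dualization, hence one sign flip accumulated so far) or that of an edge of $G_{r-2}^*$ (requiring a further unwinding). The key combinatorial observation is that within a single chessboard block arising from one crossing of $D$, all the edges originating from copies of $G$ correspond to the \emph{same} original edge of $G$ (the crossing's edge), so they all carry one fixed sign $s$, and dually all edges of $G_{r-1}^*$ restricted to that block, when pushed back to $G$, also reduce to that same edge but with a parity of dualizations that is opposite to the parity for $f$. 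Tracking this parity is the heart of the argument.

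Concretely, I would prove by induction on $r$ the sharper statement: \emph{within the block of $G_r$ coming from a fixed crossing (edge $g\in E(G)$), an edge has sign $s_g$ if the number of times it has been ``dualized'' in the recursive unwinding is even, and $-s_g$ if odd; moreover two adjacent edges lying on opposite sides of the inserted diagonal have opposite dualization parity.} The base case $r=2$ is the picture in Figure~\ref{f.t12}, where the crossing's edge and its dual are adjacent and of opposite sign. For the inductive step, an edge $f$ with $\varphi_r(f)\in E(G)$ has dualization parity $0$, while an edge $e$ with $\varphi_r(e)\in E(G_{r-1}^*)$ has parity $1 + (\text{parity of }\varphi_r(e)^* \text{ in } G_{r-1})$; since $e$ and $f$ are adjacent in $G_r$ precisely when $\varphi_r(e)^*$ meets an endpoint of $g$ appropriately in the smaller chessboard, the inductive hypothesis forces the parity of $\varphi_r(e)^*$ in $G_{r-1}$ to be even, giving $e$ parity $1$. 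Hence $e$ and $f$ have opposite parity, so opposite signs.

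The main obstacle I anticipate is making the bookkeeping of ``which edge of $G$ or of $G_{r-1}^*$ a given edge of $G_r$ projects to, and with what sign'' fully precise, because the projection $\varphi_r$ composed with dualization is not a single clean map but an alternating sequence, and one must be careful that adjacency in $G_r$ really does translate into the adjacency/incidence in $G_{r-1}$ needed to apply the inductive hypothesis. I would handle this by working entirely inside one chessboard block, where the local combinatorics is completely explicit (it is just the grid of Figure~\ref{squarefaces} with its diagonal), so that ``adjacent across the diagonal'' versus ``adjacent along a grid line'' can be distinguished directly, and only the former is the configuration described in the lemma. Everything else is routine sign-chasing.
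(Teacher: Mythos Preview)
Your plan would work, but it is far more elaborate than what is needed, and the paper's proof is a three-line direct argument with no induction at all. The paper simply observes that $\varphi_r(e)^*\in E(G_{r-1})$, and that \emph{because} $e$ and $f$ are adjacent in $G_r$ (so $\varphi_r(e)$ and $\varphi_r(f)$ cross in the plane), the edge $\varphi_r(e)^*$ of $G_{r-1}$ must lie along the edge $\varphi_r(f)$ of $G$; that is, $\varphi_{r-1}(\varphi_r(e)^*)=\varphi_r(f)\in E(G)$. Hence $\varphi_r(e)^*$ and $f$ have the same sign, so $\varphi_r(e)$ and $f$ have opposite signs, and therefore so do $e$ and $f$.

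The single geometric fact you need is exactly the one you flagged as the ``main obstacle'': that adjacency of $e$ and $f$ in $G_r$ forces $\varphi_r(e)^*$ to sit on $\varphi_r(f)$. You were right to worry about it, but once it is stated cleanly it finishes the proof immediately --- no induction, no dualization-parity invariant, no chessboard combinatorics, and no appeal to Lemma~\ref{l6}. Your inductive hypothesis (``the parity of $\varphi_r(e)^*$ in $G_{r-1}$ is even'') is in fact just the statement $\varphi_{r-1}(\varphi_r(e)^*)\in E(G)$, which holds outright rather than by induction; once you see this, your scheme collapses to the paper's argument. So your route is not wrong, but it buys nothing over the direct one-step unwinding, and the extra scaffolding (the square-face structure, the block decomposition, the parity count) can all be dropped.
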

\begin{proof}
Because $\varphi_{r}(e)\in E(G_{r-1}{}^*)$ we must have
$\varphi_{r}(e)^{*}\in E(G_{r-1})$.
Now $\varphi_{r-1}(\varphi_{r}(e)^{*})\in E(G)$, because $e$ and $f$ 
are adjacent in $G_{r}$. In fact,
$\varphi_{r-1}(\varphi_{r}(e)^{*})=\varphi_{r}(f)$.
Therefore $\varphi_{r}(e)^{*}$ has the same sign as $f$, which 
implies that $\varphi_{r}(e)$ has the opposite sign to $f$, and hence 
the result.
\end{proof}

\subsection{The genus of the ribbon graphs of $r$-fold parallels}
In this subsection we determine how the genus of a ribbon graph of a 
link diagram relates to the genus  of the ribbon graphs of its 
parallels.

Let $D$ be a link diagram and $D_r$ be its $r$-fold parallel. In 
forming the parallel, every crossing $c$ of $D$ gives rise to a set, 
$C_r(c)$, of crossings of $D_r$. If $s$ is a state of $D$ then we can 
form a state $s_r$ of $D_r$ by, for each crossing $c$, splicing all 
of the crossings in $D_r$ in the same way as $c$ was spliced in the 
state $s$. So if a crossing $c$ of $D$ is $A$-spliced (respectively 
$B$-spliced) in the state $s$, then every crossing of  in $C_r(c)$ is 
$A$-spliced (respectively $B$-spliced) in the state $s_r$ of $D_r$.



\begin{theorem}\label{ca1}
Let $D$ be a link diagram and $s$ be a state of $D$. Then 
\[ g(\G(D_{r+1},s_{r+1}))=(r+1)\cdot g(\G(D,s))+r^2\cdot 
e(\G(D,s))-r. \]
\end{theorem}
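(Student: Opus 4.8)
The plan is to relate the genus of $\G(D_{r+1},s_{r+1})$ to that of $\G(D_r,s_r)$ by tracking how a single ``parallelization step'' (passing from the $r$-fold to the $(r+1)$-fold parallel) affects the relevant ribbon graph, and then to unwind the resulting recursion. The key observation is that for a fixed state $s$, the ribbon graph $\G(D,s)$ is obtained from a Tait graph $\T(D)$ by a partial duality along a fixed set $A$ of edges (Proposition~\ref{p.pdt}), and this set $A$ is determined purely by the splicing choices, so it is ``the same'' set under the overlay construction: if $\T$ is the relevant Tait graph of $D$, then the relevant Tait graph of $D_r$ is $\T_r$ (Theorem~\ref{t.cable}), and the state $s_r$ corresponds to partial duality along the union of the preimages $\varphi_r^{-1}(\cdots)$ of $A$ under the projections of Subsection~\ref{ss.rfold}. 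Thus $\G(D_r,s_r) = \T_r{}^{A_r}$ for a naturally-defined $A_r\subseteq E(\T_r)$, and I would first make this identification precise.

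First I would compute the genus using the Euler-characteristic formula of Proposition~\ref{p.pd2}(6): for an orientable ribbon graph, $g(G^A)=\tfrac12(2k(G)+e(G)-p(G-A^c)-p(G-A))$. Since $\T_r$ is a connected plane graph we have $k=1$ and its two relevant spanning ribbon subgraphs $\T_r-A_r^c$ and $\T_r-A_r$ are themselves plane, so their boundary-component counts are read off from Euler's formula: $p(\T_r - S) = 2k(\T_r - S) - v_r + |S|$, where $v_r$ is the number of vertices of $\T_r$ and $S$ is the retained edge set. Feeding this into the genus formula, the genus of $\G(D_r,s_r)$ becomes an affine expression in $e_r$, $v_r$, and the component counts $k(\T_r - A_r)$ and $k(\T_r - A_r^c)$. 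By Lemma~\ref{t.edges} we know $e_r = r^2 e$ and we have explicit formulas for $v_r$ (splitting into $r$ even and $r$ odd). The one genuinely link-theoretic input that remains is controlling how the number of connected components of the two ``all-one-colour'' spanning subgraphs grows under parallelization — equivalently, how the number of Seifert-type circles of the spliced diagram grows.

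The main obstacle, then, is exactly this bookkeeping of component counts (equivalently, loop counts $p$) under the overlay step. Here I would argue locally at each crossing: in passing from $D_r$ to $D_{r+1}$, each crossing $c$ of $D$ contributes a new row and column to the $(r+1)\times(r+1)$ ``chessboard'' local picture (as in the proof of Theorem~\ref{t.cable}), hence $2r+1$ new crossings at $c$, i.e. $e_{r+1}-e_r = (2r+1)e$ globally, consistent with $r^2 e$. Splicing these new crossings according to the fixed local pattern of the state $s$ at $c$ adds a controlled number of new circles and merges/splits others in a way that is independent of $D$ (it depends only on the local splicing type and on $r$). Computing this local contribution to $p(\T_{r+1}-A_{r+1})$ and $p(\T_{r+1}-A_{r+1}^c)$, and summing over the $e$ crossings of $D$, should yield a recursion of the shape
\[ g(\G(D_{r+1},s_{r+1})) = g(\G(D_r,s_r)) + g(\G(D,s)) + (2r+1)\,e(\G(D,s)) - 1, \]
with the base case $r=0$ (i.e. $D_1 = D$) read off directly. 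Telescoping this recursion — using $\sum_{j=0}^{r}(2j+1) = (r+1)^2$ and $\sum_{j=0}^{r} 1 = r+1$, hence the ``$r^2$'' and ``$-r$'' terms after reindexing — gives the closed form in the statement. The only subtlety I anticipate in the local computation is handling the ``long diagonal'' squares separately from the off-diagonal ones (as in Lemma~\ref{l6}), and keeping careful track of which local arcs get joined when we pass from the state $s$ on $D$ to $s_{r+1}$ on $D_{r+1}$; once that local picture is pinned down the global count and the telescoping are routine.
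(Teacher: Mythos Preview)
Your overall strategy---writing $\G(D_r,s_r)$ as a partial dual $\T_r^{A_r}$, applying the genus formula of Proposition~\ref{p.pd2}(6), extracting a one-step recursion in $r$, and then telescoping---is exactly the route the paper takes (the paper derives Theorem~\ref{ca1} by iterating Theorem~\ref{ca4}). But two of your intermediate claims are wrong, and they are precisely the places where the real work lies. First, $A_r$ is \emph{not} simply the preimage of $A$ under the projections $\varphi$: edges of $\T_r$ that arise from dual edges in the overlay have their Tait signs flipped, so membership in $A_r$ must be tracked sign-sensitively (see Equation~\eqref{e.ar} and Lemma~\ref{ca6}). Getting this right matters because the whole recursion depends on how $A_{r+1}$ sits inside $\T_{r+1} = \T_r^{\,*}\uplus\T$.

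Second, the recursion you write down is not derived and is not correct: with coefficient $(2r+1)e$ your telescoping gives $\sum_{j=1}^{r}(2j+1)=r^2+2r$, not $r^2$, so the arithmetic does not close up. The paper obtains the recursion of Theorem~\ref{ca4} (with coefficient $r\cdot e$) via a structural lemma rather than a crossing-by-crossing count: by Lemma~\ref{differentsigns}, at every overlay vertex of $\T_{r+1}$ the two $\T$-edges and the two $\T_r^{\,*}$-edges carry opposite signs, so exactly one of the two pairs lies in $A_{r+1}$; this forces the plane ribbon graph $\T_{r+1}-A_{r+1}$ to split along the overlay vertices, giving the additive decomposition $p(\T_{r+1}-A_{r+1}) = p(\T_r^{\,*}-A_r^{\,*}) + p(\T-A)$ of Lemma~\ref{ca5} (and likewise for the complements). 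That clean splitting of boundary-component counts is the missing idea in your sketch; your ``local contribution at each crossing'' does not supply it, because the change in $p$ under the overlay is a global phenomenon governed by how the dual $\T_r^{\,*}$ threads through $\T$, not something readable from a single chessboard patch.
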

The proof of this theorem will follow from Theorem~\ref{ca4} below.

Theorem~\ref{ca1} has immediate applications to the special ribbon 
graphs of a link diagram described above.
\begin{corollary}\label{ca2}
Let $D$ be a link diagram and $D_{r+1}$ be its $(r+1)$-fold parallel. 
Then
\begin{enumerate} 
\item $g(\A(D_{r+1}))= (r+1)\cdot g(\A(D) ) +r^2\cdot e(\A(D))-r$;
\item $g(\B(D_{r+1}))= (r+1)\cdot g(\B(D) ) +r^2\cdot e(\B(D))-r$;
\item $g(\s(D_{r+1}))= (r+1)\cdot g(\s(D) ) +r^2\cdot e(\s(D))-r$.
\end{enumerate}
\end{corollary}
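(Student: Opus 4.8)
The plan is to deduce the corollary directly from Theorem~\ref{ca1} by applying it to the three distinguished states of $D$ whose ribbon graphs are $\A(D)$, $\B(D)$, and $\s(D)$. The only point that needs verification is that forming the $(r+1)$-fold parallel of each of these particular states reproduces the corresponding distinguished state of $D_{r+1}$; once that is established, each of the three statements is a mechanical substitution into the formula of Theorem~\ref{ca1}.

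First I would recall that $\A(D)=\G(D,s)$ where $s$ is the all-$A$ state, and observe that the crossings of $D_{r+1}$ are partitioned by the sets $C_{r+1}(c)$ as $c$ ranges over the crossings of $D$. Since, by the definition of $s_{r+1}$, every crossing of $C_{r+1}(c)$ is spliced in the same way that $c$ was spliced in $s$, it follows that $s_{r+1}$ is the all-$A$ state of $D_{r+1}$, so $\G(D_{r+1},s_{r+1})=\A(D_{r+1})$. Theorem~\ref{ca1} with this $s$ then gives part~(1), and the same argument with the all-$B$ state in place of the all-$A$ state gives part~(2).

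For part~(3) I would take $s$ to be the Seifert state of the oriented diagram $D$, so that $\G(D,s)=\s(D)$. The key observation here is that in the $(r+1)$-fold parallel each link component is replaced by parallel copies carrying the same orientation, so each crossing of $D_{r+1}$ lying in $C_{r+1}(c)$ has the same oriented sign as $c$. Hence the Seifert splicing at $c$ is precisely the splicing prescribed at every crossing of $C_{r+1}(c)$, which means $s_{r+1}$ is the Seifert state of $D_{r+1}$ and $\G(D_{r+1},s_{r+1})=\s(D_{r+1})$. Applying Theorem~\ref{ca1} once more yields part~(3).

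The step I expect to require the most attention — though it is not deep — is the bookkeeping that $s_{r+1}$ really is the all-$A$, all-$B$, or Seifert state of $D_{r+1}$ in the three cases; this rests only on the facts that the sets $C_{r+1}(c)$ exhaust the crossings of $D_{r+1}$ and that the parallel operation preserves oriented signs, both of which are immediate from the construction of $D_{r+1}$. With these in hand, the corollary is a direct specialization of Theorem~\ref{ca1}.
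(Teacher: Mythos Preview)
Your proposal is correct and follows essentially the same argument as the paper: apply Theorem~\ref{ca1} to the all-$A$, all-$B$, and Seifert states, after checking that $s_{r+1}$ is again the all-$A$, all-$B$, or Seifert state of $D_{r+1}$, the last case using that every crossing in $C_{r+1}(c)$ inherits the oriented sign of $c$. The paper's proof is essentially identical, only slightly terser.
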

\begin{proof}
For the first item, suppose $s$ is obtained by choosing an 
$A$-splicing at each crossing. Then $s_{r+1}$ is also obtained by 
choosing an $A$-splicing at each crossing. We have $\G(D,s)=\A(D)$ 
and $\G(D_{r+1},s_{r+1})=\A(D_{r+1})$, and the result follows from 
Theorem~\ref{ca1}. The second item is proved similarly.

As for the third item, note that if $c$ is a crossing in $D$ with 
oriented sign $+$ (respectively $-$) then every crossing in  
$C_{r}(c)$ also has oriented sign $+$ (respectively $-$). 
Thus if $s$ is a state of $D$ such that $\G(D,s)=\s(D)$, it follows 
that $s_{r+1}$ is a state of  $D_{r+1}$ such that 
$\G(D_{r+1},s_{r+1})=\s(D_{r+1})$. The result then follows from 
Theorem~\ref{ca1}.
\end{proof}

The Turaev genus, $g_t(L)$, (see \cite{Ab09,Lo08,Tu97}) of a link $L$ 
is defined as the minimum genus $g(\A(D))$, over all link diagrams 
$D$ of $L$. The following corollary provides a bound on the  Turaev 
genus of an $r$-fold parallel of a link in terms of the Turaev genus 
of the original link.
\begin{corollary} Let $L$ be a link and $L_r$ be its $r$-fold 
parallel. Then an upper bound on the Turaev genus, $g_t(L_r)$, of 
$L_r$ is 
\[ g_t(L_r)\leq (r+1)\cdot g_t(L)+r^{2}c-r, \]
where $c$ is the crossing number of any diagram $D$ for $L$ attaining 
the Turaev genus.
\end{corollary}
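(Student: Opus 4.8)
The plan is to obtain this as an immediate consequence of Corollary~\ref{ca2}(1) together with the definition of the Turaev genus as a minimum over diagrams. First I would fix a diagram $D$ of $L$ that attains the Turaev genus, so that $g(\A(D))=g_t(L)$, and let $c$ be the number of crossings of $D$. Forming the $(r+1)$-fold parallel produces a diagram $D_{r+1}$ of $L_{r+1}$; since $g_t(L_{r+1})$ is by definition the minimum of $g(\A(D'))$ over \emph{all} diagrams $D'$ of $L_{r+1}$, and $D_{r+1}$ is one such diagram, we get $g_t(L_{r+1})\le g(\A(D_{r+1}))$.

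Next I would apply Corollary~\ref{ca2}(1) to $D$, which gives
\[ g(\A(D_{r+1}))=(r+1)\cdot g(\A(D))+r^{2}\cdot e(\A(D))-r. \]
The two remaining ingredients are purely bookkeeping: the number of edges of a ribbon graph of a link diagram is the number of crossings of the diagram, so $e(\A(D))$ equals the number of crossings of $D$, namely $c$; and $g(\A(D))=g_t(L)$ by our choice of $D$. Substituting both into the display and combining with the inequality of the previous paragraph yields
\[ g_t(L_{r+1})\le (r+1)\cdot g_t(L)+r^{2}c-r. \]

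There is no genuine obstacle in this argument; it is a direct corollary of the exact genus formula in Corollary~\ref{ca2}. The only points meriting a line of care are that one must minimise on the side of the parallel --- it is precisely the fact that $D_{r+1}$ need not be a minimum-genus diagram of $L_{r+1}$ that turns the equality of Corollary~\ref{ca2} into an inequality here --- together with the identity $e(\A(D))=c$. If the bound is wanted exactly as displayed with $L_r$ in place of $L_{r+1}$, one simply reruns the same three steps after replacing $r$ by $r-1$, which produces $g_t(L_r)\le r\cdot g_t(L)+(r-1)^{2}c-(r-1)$; I would expect the statement above to be read with this shift of index in mind.
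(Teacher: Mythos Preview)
Your proposal is correct and follows essentially the same approach as the paper, whose proof is the single sentence ``follows immediately from the definition of the Turaev genus and Corollary~\ref{ca2}.'' You have simply spelled out the steps: choose a diagram $D$ realizing $g_t(L)$, note that its parallel is one particular diagram of the parallel link (hence gives an upper bound), and substitute $e(\A(D))=c$ and $g(\A(D))=g_t(L)$ into Corollary~\ref{ca2}(1). Your observation about the index shift is also well taken: as written, the right-hand side $(r+1)\,g_t(L)+r^{2}c-r$ is exactly what Corollary~\ref{ca2}(1) gives for $g(\A(D_{r+1}))$, so the bound as stated is really for $g_t(L_{r+1})$ rather than $g_t(L_r)$.
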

\begin{proof}
The corollary follows immediately from the definition of the Turaev 
genus and Corollary~\ref{ca2}.
\end{proof}

Theorem~\ref{ca1} also provides a way to calculate the genus of a 
ribbon graph of $D_r$ in terms of the Tait graph of $D$.
\begin{corollary}\label{ca3} 
Let $D$ be a link diagram, $\T=\T(D)$ be 
its Tait graph, and $s$ be a state of $D$. Then if $A$ is the set of 
edges of $\T$ such that $\T^A= \G(D,s)$,
\[2g(\G(D_{r+1},s_{r+1}))=(2r^2+r+1)\cdot e(\T) - (r+1) \cdot(p(\T-A^c) - 
p(\T-A)) + 2.\] 
\end{corollary}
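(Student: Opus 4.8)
The plan is to obtain the formula directly from Theorem~\ref{ca1} by substituting in the partial-dual genus formula of Proposition~\ref{p.pd2}(6); with that substitution in place, the corollary reduces to a short computation.

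First I would record two facts about $\G(D,s)=\T^{A}$. Since $\T$ is a plane graph it is cellularly embedded in $S^{2}$, hence connected, so $k(\T)=1$; moreover $\T$ is orientable, so $\T^{A}$ is orientable by Proposition~\ref{p.pd2}(4). Thus Proposition~\ref{p.pd2}(6) applies with $G=\T$ and gives
\[ g(\G(D,s)) \;=\; g(\T^{A}) \;=\; \frac{1}{2}\bigl(2 + e(\T) - p(\T-A^{c}) - p(\T-A)\bigr). \]
In addition, partial duality preserves the edge set, so $e(\G(D,s)) = e(\T^{A}) = e(\T)$.

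Next I would substitute both of these expressions into the identity of Theorem~\ref{ca1},
\[ g(\G(D_{r+1},s_{r+1})) \;=\; (r+1)\cdot g(\G(D,s)) + r^{2}\cdot e(\G(D,s)) - r, \]
and multiply through by $2$. The coefficient of $e(\T)$ then becomes $(r+1)+2r^{2}=2r^{2}+r+1$, the constant term becomes $2(r+1)-2r=2$, and the $(r+1)$ copies of the bracket $-p(\T-A^{c})-p(\T-A)$ supply the boundary-component contribution; collecting everything gives the stated identity.

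I do not expect a genuine obstacle here: all of the substantive work is carried by Theorem~\ref{ca1} (which is itself deduced from Theorem~\ref{ca4}), and what remains is bookkeeping. The only points I would take care to spell out are the hypotheses of Proposition~\ref{p.pd2}(6) for $G=\T$ — orientability and $k(\T)=1$, both noted above — and the fact that $p(\T-A^{c})$ and $p(\T-A)$ in the statement of Corollary~\ref{ca3} denote precisely the numbers of boundary components of the spanning ribbon subgraphs obtained from the genus-zero ribbon graph representing $\T$ by deleting $A^{c}$ and $A$ respectively, so that they are literally the quantities appearing in Proposition~\ref{p.pd2}(6).
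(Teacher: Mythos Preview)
Your approach is essentially identical to the paper's: both invoke Theorem~\ref{ca1}, substitute the genus formula for $\T^{A}$ from Proposition~\ref{p.pd2}(6), and collect terms. One small remark: your computation (like the paper's own proof) actually yields $-(r+1)\bigl(p(\T-A^{c})+p(\T-A)\bigr)$ rather than the difference $p(\T-A^{c})-p(\T-A)$ printed in the statement, so the sign in the displayed identity of the corollary appears to be a typo rather than a flaw in your argument.
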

\begin{proof}
By Theorem~\ref{ca1},
\[ g(\G(D_{r+1},s_{r+1}))=(r+1)\cdot g(\G(D,s))+r^{2}\cdot 
e(\A(D))-r. \]
Since $\G(D,s)=\T^A$, Proposition~\ref{p.pd2} gives 
\[g(\G(D,s) ) = 1 + e(\T)/2 - p(\T-A^c)/2 - p(\T-A)/2, \]
and so 
\[g(\G(D_{r+1},s_{r+1}))=(2r^2+r+1)/2\cdot e(\T) - (r+1)\cdot p(\T-A^c)/2 - 
(r+1) \cdot p(\T-A)/2 + 1. \]
\end{proof}

\bigskip

We will now turn our attention to the proof of Theorem~\ref{ca1}. In 
fact, we will prove the following more general result which expresses 
$g(\A(D_{r+1}))$ in terms of the genus of lower order parallels of $D$.
Theorem~\ref{ca1} follows by repeated application of this result.
\begin{theorem}\label{ca4}
Let $D$ be a link diagram and $s$ be a state of $D$. Then for each  
$r\in \mathbb{N}$ we have 
\[ g(\G(D_{r+1}),s_{r+1})=g(\G(D_{r},s_r))+g(\G(D,s))+r\cdot 
e(\G(D,s))-1. \]
\end{theorem}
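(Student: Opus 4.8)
The plan is to compute $g(\G(D_{r+1},s_{r+1}))$ using the genus formula for partial duals, Proposition~\ref{p.pd2}(6), $g(G^A) = \tfrac12(2k(G) + e(G) - p(G-A^c) - p(G-A))$, applied to the Tait graph $\T_{r+1} := \T(D_{r+1})$ of the $(r+1)$-fold parallel. By Proposition~\ref{p.pdt}(1) we may write $\G(D,s) = \T^A$ for a suitable edge set $A\subseteq E(\T)$; since $s_r$ splices every crossing in $C_r(c)$ the same way $s$ splices $c$, the corresponding state of $D_r$ picks out the set $A_r\subseteq E(\T_r)$ obtained by taking, for each $e\in A$, all the edges of $\T_r$ that project to $e$ (or to $e^*$) under the iterated projection maps $\varphi_r$ of Subsection~\ref{ss.rfold}. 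Thus $\G(D_{r},s_r) = \T_r{}^{A_r}$, and likewise for $r+1$. So what must be controlled is the behaviour of $p(\T_{r+1} - A_{r+1}^c)$ and $p(\T_{r+1} - A_{r+1})$ as $r$ increases, since $k=1$ throughout and $e(\T_{r+1}) = (r+1)^2 e(\T)$ by Lemma~\ref{t.edges}.

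The key step is a \emph{local} analysis of the spanning ribbon subgraph $\T_{r+1} - A_{r+1}^c$, which is a spanning ribbon subgraph of the Tait graph of the $(r+1)$-fold parallel. Because $\T_{r+1} = \T_r{}^* \overlay \T$, passing from the $r$-fold to the $(r+1)$-fold parallel replaces each crossing's $r\times r$ chessboard (Lemma~\ref{l6}) by an $(r+1)\times(r+1)$ one: one new row and one new column of squares, together with a new long diagonal. The boundary components of $\T_{r+1} - A_{r+1}^c$ can therefore be tracked by comparing, crossing by crossing, how the new edges and squares attach to the boundary cycles already present in $\T_r - A_r^c$. I expect to show that adding the new row/column/diagonal data contributed by one crossing changes the boundary component count in a way that, summed over all $e(\T)$ crossings, produces a linear-in-$r$ correction — concretely, something of the shape $p(\T_{r+1}-A_{r+1}^c) = p(\T_r - A_r^c) + p(\T - A^c) - (\text{const}) + (\text{linear in }r)\cdot e(\T)$, and symmetrically for $\T_{r+1}-A_{r+1}$. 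The sign-compatibility given by Lemma~\ref{differentsigns} is what guarantees that the splicing pattern $A_{r+1}$ interacts with the new long diagonal in the uniform way needed for this bookkeeping to be crossing-independent.

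Plugging these two recurrences into the formula of Proposition~\ref{p.pd2}(6) for $\G(D_{r+1},s_{r+1})$, $\G(D_r,s_r)$, and $\G(D,s)$, the quadratic terms $(r+1)^2 e(\T)$, $r^2 e(\T)$, $e(\T)$ combine with $e(\T) = e(\G(D,s))$ and the boundary-count recurrences so that everything collapses to
\[ g(\G(D_{r+1},s_{r+1})) = g(\G(D_r,s_r)) + g(\G(D,s)) + r\cdot e(\G(D,s)) - 1, \]
which is the claim. Theorem~\ref{ca1} then follows by unwinding this recurrence: summing $g(\G(D_{j+1},s_{j+1})) - g(\G(D_j,s_j)) = g(\G(D,s)) + j\cdot e(\G(D,s)) - 1$ for $j = 1,\dots,r$ telescopes to $g(\G(D_{r+1},s_{r+1})) = (r+1)g(\G(D,s)) + \bigl(\sum_{j=1}^r j\bigr) e(\G(D,s)) - r$, but since one also wants the stated $r^2$ coefficient one instead sums with the substitution giving $\sum_{j=0}^{r} j = r(r+1)/2$ appropriately; I will double-check the index range so the final coefficient is exactly $r^2$ as stated. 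The main obstacle is the local boundary-component analysis in the second paragraph: one must verify that the effect of inserting the new row, column and long diagonal on the number of boundary cycles of the spliced ribbon graph really is crossing-independent and linear in $r$, and here the figures analogous to Figures~\ref{f.t12} and~\ref{squarefaces}, together with Lemma~\ref{l6} and Lemma~\ref{differentsigns}, will do the heavy lifting.
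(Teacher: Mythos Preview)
Your overall strategy---write $\G(D_k,s_k)=\T_k{}^{A_k}$, apply Proposition~\ref{p.pd2}(6), and control the boundary-component counts $p(\T_{r+1}-A_{r+1})$ and $p(\T_{r+1}-A_{r+1}^c)$ via the overlay structure $\T_{r+1}=\T_r{}^*\overlay\T$---matches the paper's. But the heart of the argument, your second paragraph, is where the proposal has a real gap.

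You propose a crossing-by-crossing chessboard analysis and conjecture a recurrence of the form $p(\T_{r+1}-A_{r+1}^c)=p(\T_r-A_r^c)+p(\T-A^c)-(\text{const})+(\text{linear in }r)\cdot e(\T)$. This is both the wrong shape and the wrong object on the right-hand side. The paper's key observation (its Lemma~\ref{ca5}) is that the splitting is \emph{exact}: one has $p(\T_{r+1}-A_{r+1})=p(\T_r{}^*-A_r{}^*)+p(\T-A)$, with no correction terms at all, and with $\T_r{}^*$ (not $\T_r$) appearing because $\T_{r+1}=\T_r{}^*\overlay\T$. The mechanism is global rather than crossing-local: decompose $\T_{r+1}$ into the subgraph $H$ induced by edges projecting to $E(\T)$ and the subgraph $K$ induced by edges projecting to $E(\T_r{}^*)$. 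These meet only at the degree-four overlay vertices, and Lemma~\ref{differentsigns} forces the two $H$-edges and two $K$-edges at each such vertex to have opposite signs. The definition of $A_{r+1}$ then guarantees that at every overlay vertex either both $H$-edges or both $K$-edges lie in $A_{r+1}$, so \emph{no connected component of $\T_{r+1}-A_{r+1}$ mixes $H$-edges and $K$-edges}. The boundary components therefore split exactly, and the same argument handles $A_{r+1}^c$. Once you have this clean splitting, the genus formula plus $e(\T_{r+1})=(2r+1)e(\T)+e(\T_r{}^*)$ and $g((\T_r{}^*)^{A_r{}^*})=g(\T_r{}^{A_r})$ gives the result by direct substitution.

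Your instinct that Lemma~\ref{differentsigns} is the crucial ingredient is correct, but its role is not to make a messy local count ``crossing-independent''---it is to sever $H$ from $K$ entirely in the spanning subgraph, which is a much stronger and simpler statement. Incidentally, your description of $A_r$ as ``all edges projecting to $e$ or $e^*$'' is too loose; the correct description (the paper's Equation~\eqref{e.ar}) is sign-sensitive, and that sign-sensitivity is exactly what makes the $H$/$K$ separation work.
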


\medskip

\begin{figure}
\begin{tabular}{l  |c | c }
Crossing $c$ & 
\raisebox{-5mm}{\includegraphics[height=1.2cm]{taitplus}} & 
\raisebox{-5mm}{\includegraphics[height=1.2cm]{taitminus}}
\\ \hline
\raisebox{3mm}{edge $e$ in $\T$} &  \includegraphics[width=2cm]{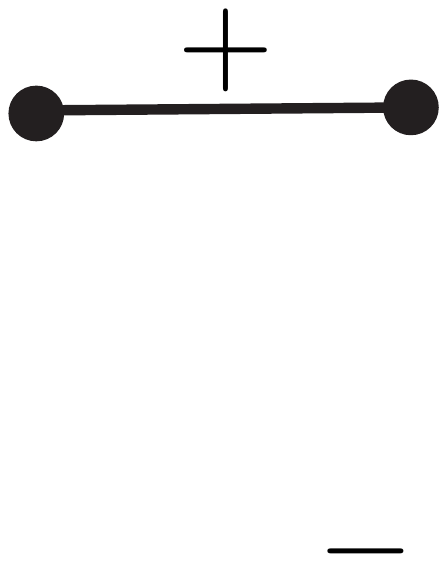} 
& \includegraphics[width=2cm]{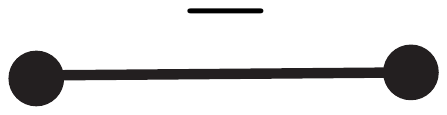}
\\ \hline
Splicing & \begin{tabular}{c|c}
\raisebox{15mm}{~} \includegraphics[height=1.2cm]{splice2} &  
\includegraphics[height=1.2cm]{splice3}
\\ $A$-splicing &  $B$-splicing
\end{tabular}
& \begin{tabular}{c|c}
\raisebox{15mm}{~} \includegraphics[height=1.2cm]{splice2} &  
\includegraphics[height=1.2cm]{splice3}
\\ $B$-splicing &  $A$-splicing 
\end{tabular}\\
\hline
$e\in A$?  & Yes \hspace{15mm} No   & Yes \hspace{15mm} No  
\end{tabular}

\caption{The correspondence between splicings in $D$ and edges in 
$\T(D)$
}
\label{f.s}
\end{figure}

To prove the theorem we begin by discussing some relations of  
$\G(D_{r+1},s_{r+1})$ with partial duals. Suppose that $D$ is a link 
diagram, $s$ is a state of $D$, and $\T(D)$ is its Tait graph. Since 
the ribbon graphs of a link diagram   are all partial duals of the 
Tait graph, by Proposition~\ref{p.taitdual}, we know that there exists a unique set of edges 
$A\subseteq E(\T(D))$, such that $\G(D,s)=\T(D)^A$ (the uniqueness 
uses the fact that the edges are signed), and similarly, for each 
$r\in \mathbb{N}$,  there is a subset of edges  $A_r\subseteq 
E(\G(D_r, s_r))$ such that $\G(D_r, s_r) = \T(D_r)^{A_r} = 
(\T_r(D))^{A_r}$. Our first aim is to find a  description of the set 
$A_r$ in terms of the set $A$.

In the state $s_r$ of $D_r$, every crossing in $C_r(c)$ (which is the 
set of crossings in $D_r$ arising from the crossing $c$ in $D$) is 
spliced in the same way as the crossing $c$ of $D$. Now let $e$ be 
the edge of $\T(D)$ corresponding to $c$, and $E_r(e)$ be the set of 
edges of $D_r$ corresponding to the crossings in $C_r(c)$. Then from 
Figure~\ref{f.s}, we see that $e$ is or is not in $A$ depending on 
the splicing at $c$. Now, let $e_r$ be an edge in $E_r(e)$, and 
$c_r\in C_r(c)$ be the crossing corresponding to $e_r$. Then $c_r$ 
and $c$ are  spliced in the same way in the states $s_r$ and $s$ 
respectively. So to determine if $e_r\in A_r$, which is the set of 
edges of $ \T(D_r)$ that describe the state $s_r$, we consult 
Figure~\ref{f.s} and see that $e_r\in A_r$ if and only if
\begin{itemize}
\item $e$ and $e_r$ have the same sign and $e\in A$; or
\item $e$ and $e_r$ have different signs and $e\notin A$. 
\end{itemize}  
Thus the set of edges $A_r$ such that $\G(D_r, s_r) = \T(D_r)^{A_r}$ 
is constructed by, for each edge $e_r$ of $\T(D_r)$, looking at the 
edge $e$ in $\T(D)$ that is associated with $e_r$, and putting 
$e_{r}$ in $A_r$ if $e$ and $e_r$ have the same sign and $e\in A$, or 
if $e$ and $e_r$ have different signs and $e\notin A$. So,
\begin{multline}
\label{e.ar}
A_r=\{ e \in E(\T(D)_r) \; |\;  [  \rho 
(e) \in A \text{ with same sign as  }e]  \text{ or } [  
\rho (e) \notin A \text{ with different sign to   }e ] \} , 
\end{multline}
where $\rho (e)$ is the edge in $\T(D)$ such that $e\in 
E_r(\rho(e))$.

This discussion in summarized by the following lemma.
\begin{lemma}\label{ca6}
Let $D$ be a link diagram and $s$ be a state of $D$. If $ \G(D,s)= 
\T(D)^A$, then  $\G(D_r, s_r)=\T(D_r)^{A_r}$, where $A_r$ is given by 
Equation~\eqref{e.ar}.
\end{lemma}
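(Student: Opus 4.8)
\textbf{Proof proposal for Lemma~\ref{ca6}.}

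The plan is to treat this lemma as essentially a bookkeeping statement: it merely records the correspondence between splicings and partial-dual edge-sets, and then transports that correspondence through the parallel operation one crossing at a time. First I would recall the two ingredients we are allowed to use without further comment. By Proposition~\ref{p.taitdual}, every ribbon graph of a link diagram is a partial dual of either Tait graph; combined with the fact that the Tait graphs of a link diagram are \emph{signed} plane graphs and that partial duality acts on signs by flipping the sign of exactly the edges in the partial-dual set, this forces the set $A$ with $\G(D,s)=\T(D)^A$ to be \emph{unique}. The same statements applied to the diagram $D_r$ give a unique set $A_r\subseteq E(\T(D_r))$ with $\G(D_r,s_r)=\T(D_r)^{A_r}$. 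So the content of the lemma is entirely in verifying that this already-determined $A_r$ is the set described by Equation~\eqref{e.ar}; there is no genus or topology to compute here.

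The second ingredient is the local dictionary of Figure~\ref{f.s}, which for a single crossing $c$ of $D$ with corresponding edge $e$ of $\T(D)$ tells us, in terms of the Tait sign of $c$ and the choice of $A$- versus $B$-splicing, whether $e\in A$. The key step is then to combine this with the behaviour of signs under parallelisation. For a crossing $c$ of $D$ with edge $e\in E(\T(D))$, forming the $r$-fold parallel replaces $c$ by the block $C_r(c)$ of crossings of $D_r$, and correspondingly $e$ by the block $E_r(e)$ of edges of $\T(D_r)=\T_r(D)$. Each $c_r\in C_r(c)$ is, by construction of $s_r$, spliced in exactly the same manner ($A$ or $B$) as $c$ is in $s$. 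Hence, to decide whether the edge $e_r\in E_r(e)$ corresponding to $c_r$ lies in $A_r$, I would apply the Figure~\ref{f.s} dictionary to $c_r$: the answer depends only on (i) the Tait \emph{sign} of $c_r$ and (ii) the common splicing choice. Now $\rho(e)=e$ is the original edge, and the splicing choice at $c_r$ is encoded, again via Figure~\ref{f.s}, by whether $e\in A$ together with the Tait sign of $e$. Reconciling ``Tait sign of $c_r$'' with ``Tait sign of $c$'' and ``$e\in A$'' is a short case analysis on the four combinations of (sign of $e_r$ agrees/disagrees with sign of $e$)$\times$($e\in A$ or $e\notin A$): in each case one checks that the Figure~\ref{f.s} verdict for $e_r$ coincides with the stated condition — $e_r\in A_r$ iff ($e\in A$ and $\mathrm{sign}(e_r)=\mathrm{sign}(e)$) or ($e\notin A$ and $\mathrm{sign}(e_r)\neq\mathrm{sign}(e)$). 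This is precisely Equation~\eqref{e.ar}.

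The only genuinely non-trivial point — and what I expect to be the main obstacle — is justifying that the sign of an edge $e_r\in E_r(e)$ is governed by the projection map in the way Lemma~\ref{differentsigns} and the surrounding discussion assert, i.e.\ that after iterating the overlay construction $r$ times the sign of $e_r$ is determined by whether it ultimately projects (via the composite of the maps $\varphi_k$) onto a copy of $e$ in $\T(D)$ or onto a copy of $e^*$ in $\T(D)^*$, the latter contributing the opposite sign. This is exactly the content already packaged in Lemma~\ref{differentsigns}, so I would invoke that lemma directly: adjacent edges of $\T(D_r)$ arising one from a $G_{r-1}{}^*$-edge and one from a $G$-edge have opposite signs, and hence within a single block $E_r(e)$ the signs partition into ``same as $e$'' and ``opposite to $e$'' according to the parity of how many times one passed through a dual. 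Granting this, the case analysis above is routine. I would therefore keep the write-up short: state that uniqueness of $A_r$ follows from Proposition~\ref{p.taitdual} and the signed structure, that the membership criterion for $e_r$ in $A_r$ is read off from Figure~\ref{f.s} applied to $c_r$, and that translating this into a condition on $e=\rho(e_r)$ and on the relative signs of $e$ and $e_r$ (using Lemma~\ref{differentsigns}) yields Equation~\eqref{e.ar}, which is the assertion of the lemma.
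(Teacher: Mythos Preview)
Your proposal is correct and follows essentially the same approach as the paper, whose ``proof'' is simply the discussion immediately preceding the lemma: invoke Proposition~\ref{p.taitdual} for existence and uniqueness of $A_r$, then read membership of $e_r$ in $A_r$ directly from the Figure~\ref{f.s} dictionary applied to the crossing $c_r$ (which shares its splicing with $c$), and compare with the same dictionary applied to $c$ to obtain the sign-matching criterion of Equation~\eqref{e.ar}. One small over-complication: your appeal to Lemma~\ref{differentsigns} is unnecessary here, since Equation~\eqref{e.ar} is phrased purely in terms of the Tait signs of $e_r$ and $\rho(e_r)$ --- intrinsic data of $\T(D_r)$ and $\T(D)$ that require no tracking through the overlay projections --- and the paper reserves Lemma~\ref{differentsigns} for the proof of Lemma~\ref{ca5}.
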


We will also need the following technical lemma.
\begin{lemma}\label{ca5}
Let $\T$ be a Tait graph and, for each $k\in \mathbb{N}$,  $A_k$ be 
given by Equation~\eqref{e.ar}.
Then
\begin{enumerate}
\item \label{ca5.2}  $p(\T_{r+1} - A_{r+1}) = p( \T_r{}^* - A_r{}^* 
)+ p(\T-A)$;
\item \label{ca5.3} $p(\T_{r+1} - (A_{r+1})^c) = p( \T_r{}^* - 
(A_r{}^*)^c )+ p(\T-A^c)$.
\end{enumerate}       
\end{lemma}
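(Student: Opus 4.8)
The plan is to prove both statements simultaneously by exploiting the recursive definition $\T_{r+1} = \T_r{}^* \overlay \T$ together with the boundary-component count of a spanning ribbon subgraph. First I would recall that if $G$ is a plane (genus-zero) ribbon graph and $B \subseteq E(G)$, then $p(G - B)$ equals the number of faces of the plane graph obtained by deleting the edges of $B$, equivalently the number of connected components of the complement of $G-B$ in $S^2$. So both sides of each identity are really statements about faces of plane graphs. The key structural input is the overlay: each edge $e$ of $\T_{r+1}$ lies, via the projection $\varphi_{r+1}$ of Lemma~\ref{differentsigns}, over a unique edge of $\T_r{}^*$ or of $\T$; moreover, by the sign bookkeeping established just before Lemma~\ref{differentsigns} and the explicit description \eqref{e.ar}, an edge of $\T_{r+1}$ lies in $A_{r+1}$ if and only if its image under $\varphi_{r+1}$ lies in the copy of $A_r$ inside $E(\T_r{}^*)$ (after the sign flip $A_r \leftrightarrow A_r{}^*$) or in the copy of $A$ inside $E(\T)$. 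In other words, the set $A_{r+1}$ of edges is precisely $\varphi_{r+1}^{-1}(A_r{}^* \cup A)$, and its complement is $\varphi_{r+1}^{-1}((A_r{}^*)^c \cup A^c)$.

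Given this, I would set up the combinatorial picture locally at each crossing. Fix a crossing $c$ of the link diagram associated with $\T$, and recall from the proof of Theorem~\ref{t.cable} that in $\T_{r+1}$ this crossing contributes an $(r+1)\times(r+1)$ chessboard whose long diagonal comes from $\T$ (that is, from the new edges inserted by the overlay product with $\T_1$) while the off-diagonal structure comes from $\T_r{}^*$. Deleting $A_{r+1}$ from $\T_{r+1}$ then means: delete from the diagonal exactly the edges over $A$, and delete from the rest exactly the edges over $A_r{}^*$. The effect on the face count factors as a ``product'' of the two independent operations — removing the $A$-part of the diagonal, which is exactly the data of $p(\T - A)$, and removing the $A_r{}^*$-part of the remaining board, which is exactly the data of $p(\T_r{}^* - A_r{}^*)$. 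I would make this precise by observing that the diagonal edges of the board separate it into two triangular halves, so removing off-diagonal edges on one side cannot merge faces across the diagonal unless a diagonal edge is also removed, and conversely removing a diagonal edge only glues together the two faces immediately adjacent to it within that single board cell; summing these local contributions over all crossings, and accounting once for the global ambient face, gives $p(\T_{r+1} - A_{r+1}) = p(\T_r{}^* - A_r{}^*) + p(\T - A)$. Statement~\eqref{ca5.3} is identical with $A$ replaced by $A^c$ throughout, using that $(A_{r+1})^c = \varphi_{r+1}^{-1}((A_r{}^*)^c \cup A^c)$.

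The main obstacle I anticipate is making the ``the two deletions act independently'' claim rigorous rather than merely plausible from the chessboard picture. The delicate point is that $\T_r{}^*$ appears in $\T_{r+1}$ already in subdivided form (its edges are cut by the diagonal coming from $\T$), so I cannot directly quote $p(\T_r{}^* - A_r{}^*)$ for the abstract graph $\T_r{}^*$; I must check that subdividing edges and then deleting a subset corresponding to $A_r{}^*$ does not change the face count relative to the unsubdivided $\T_r{}^* - A_r{}^*$ — which is true because subdivision is a homeomorphism of the underlying space — and then verify that inserting the diagonal edges (the $\T$-part) modifies the face count by exactly the contribution of $\T$ with its $A$-edges deleted, independently of where the $A_r{}^*$-edges were removed. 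I would handle this by a careful Euler-characteristic or direct face-counting argument at the level of a single chessboard cell, where everything is finite and explicit, and then sum; alternatively one can phrase the whole argument in terms of the curves bounding the regions (boundary components of the spanning ribbon subgraphs) and track how many closed curves result, which is perhaps cleaner since it avoids repeatedly invoking Euler's formula. Either way the heart of the lemma is this local chessboard analysis, and once it is in place the global statement follows by summing over crossings together with Lemma~\ref{ca6}.
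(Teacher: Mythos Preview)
Your setup is right in spirit: the recursive decomposition $\T_{r+1}=\T_r{}^*\overlay\T$ and the fact that $A_{r+1}$ is pulled back along $\varphi_{r+1}$ from data on $\T_r{}^*$ and on $\T$ are exactly the ingredients one needs. But the proof strategy you outline --- a local face-count on each $(r+1)\times(r+1)$ chessboard, followed by ``summing these local contributions over all crossings'' --- is where the difficulty lies, and you have not proposed a mechanism that would actually make it work. Boundary components (equivalently, faces of the plane graph after deletion) are global objects: a single face of $\T_{r+1}-A_{r+1}$ will typically traverse many chessboards, so there is no clean way to assign a ``local contribution'' at each crossing and add them up. Your own remark that the diagonal only separates \emph{within one board} is precisely the obstruction: globally the $\T$-edges do not separate anything, so the independence cannot be deduced from the chessboard geometry alone.

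The paper bypasses this entirely with a much shorter argument that you are circling around but do not state. Write $H$ for the subgraph of $\T_{r+1}$ on the edges over $\T$ and $K$ for the subgraph on the edges over $\T_r{}^*$. The only vertices incident to edges of both $H$ and $K$ are the new overlay vertices, and each such vertex has exactly four incident edges: two in $H$ (pieces of one $\T$-edge) and two in $K$ (pieces of one $\T_r{}^*$-edge). By Lemma~\ref{differentsigns} the $H$-pair and the $K$-pair have opposite signs, and since they lie over the \emph{same} edge of $\T$, the definition \eqref{e.ar} forces exactly one of the two pairs to belong to $A_{r+1}$. Hence in $\T_{r+1}-A_{r+1}$ every overlay vertex has degree two, with both surviving edges in $H$ or both in $K$. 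It follows at once that no connected component of $\T_{r+1}-A_{r+1}$ contains edges from both $H$ and $K$; after suppressing the degree-two overlay vertices the $H$-part is a copy of $\T-A$ and the $K$-part is a copy of $\T_r{}^*-A_r{}^*$, as spanning ribbon subgraphs. Additivity of $p$ over this disjoint decomposition gives the lemma immediately, and the second item is identical with $A$ replaced by $A^c$.

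So the gap in your plan is not a technicality about subdivisions or Euler characteristics: it is that you are trying to prove ``independence'' by a local face-count when the real reason for independence is that the graph $\T_{r+1}-A_{r+1}$ literally breaks into two pieces at the overlay vertices. Once you see that one of the two edge-pairs at each overlay vertex is always deleted, the whole lemma is a one-line consequence, and no chessboard analysis is needed.
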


\begin{proof}
For the first item, let $\varphi$ be the  projection of   
$E(G_{r+1})$ onto $ E(G_{r}{}^*) \cup E(G)$, as described at the end 
of Subsection~\ref{ss.rfold}. In addition, let $H$ denote the plane 
subgraph of $G_{r+1} = G_{r}{}^* \overlay G$ induced by 
$\varphi^{-1}(E(G) )$, and let $K$ denote the plane subgraph of 
$G_{r+1} = G_{r}{}^* \overlay G$ induced by 
$\varphi^{-1}(E(G_{r}{}^*) )$. Then $H\cup K = G_{r+1}$ and $H\cap K$ 
is the set of vertices created by the overlay  $G_{r}{}^* \overlay 
G$.
 
Let $v\in H\cap K$. Then $v$ is incident to exactly four edges. Call 
these edges $e,e',f,f'$, two of which are in $H$ and two of which are 
in $K$. Suppose that $e$ and $e'$ are in $H$, and that $f$ and $f'$ 
are in $K$.
 
By Lemma~\ref{differentsigns}, $e$ and $e'$ are both of the same 
sign, $m$ say, and $f$ and $f'$ are both of the same sign $-m$. It 
then follows from the definition of $A_{r+1}$, that either $e,e' \in 
A_{r+1}$, or $f,f' \in A_{r+1}$. Consequently no connected 
component of $\T_{r+1} - A_{r+1}$ has edges in both $H$ and $K$. So 
every connected component of $\T_{r+1} - A_{r+1}$ is a connected 
component of exactly one of the plane graphs $H-A_{r+1}$ and 
$K-A_{r+1}$. Therefore
\[  p(\T_{r+1} - A_{r+1}) = p(H- A_{r+1}) + p(K- A_{r+1}). \]
Finally, by the definition of the overlay product, we have that $p(H 
- A_{r+1}) = p(\T-A)$ and that $p(K- A_{r+1}) = p(\T_r{}^* - 
A_r{}^*)$, and the result follows.
 
The proof of the second item is similar.
\end{proof}

We can now prove Theorem~\ref{ca4}
\begin{proof}[Proof of Theorem~\ref{ca4}]
Let $\T:=\T(D)$ and $\T_k:=\T(D_k)$ be the Tait graphs of $D$ and $D_k$ 
respectively. In addition let $s$ be a state of $D$ and $A\subseteq 
E(\T)$ be such that $\G(D,s)=\T^A$. Then by Theorem~\ref{t.cable}, 
$\T(D_k) = \T_k$, the $k$-fold overlay of $\T$. Moreover, by 
Lemma~\ref{ca6}, $\G(D_k,s_k) = \T_k{}^{A_k}$, where $A_k$ is given 
by  Equation~\eqref{e.ar}.
We then have
\[ 2g(\G(D_{r+1},s_{r+1})) =  2g(\T_{r+1}{}^{A_{r+1}}) = 2 + e( 
\T_{r+1}{}^{A_{r+1}}) - p(\T_{r+1} - A_{r+1}{}^c) -  p(\T_{r+1} - 
A_{r+1}), \]
where the second equality follows from Proposition~\ref{p.pd2}.
Using Lemma~\ref{ca5} to expand the above expression, we have
\[  2g(\G(D_{r+1},s_{r+1}))  = 2 + e(\T_{r+1}{}^{A_{r+1}}) - p(\T_r{}^* - 
(A_r{}^*)^c) - p(\T-A^c) - p(\T_r{}^* - A_r{}^*) - p(\T-A). \]
Now, since partial duality does not change the number of edges, we can use  Lemma~\ref{t.edges} twice to get
\[e(\T_{r+1}{}^{A_{r+1}}) =e(\T_{r+1}) = (r+1)^2 e(\T) = (2r+1)e(\T) + e(\T_r) = 
(2r+1)e(\T) + e(\T_r{}^*). \]
Thus we can write
\begin{multline*}
2g(\G(D_{r+1},s_{r+1})) = \left[2r\cdot e(\T) - 2\right] + \left[2 + 
e(\T_r{}^*) - p(\T_r{}^* - (A_r{}^*)^c) -  p(\T_r{}^* - A_r{}^*) 
\right]
\\ +\left[2 + e(\T) - p(\T-A^c) - p(\T-A) \right].
\end{multline*}
Which, by Proposition~\ref{p.pd2} gives
\begin{multline*}
2g(\G(D_{r+1},s_{r+1})) = 2g((\T_r{}^*)^{A_r{}^*}) + 2g(\T^A) + 
2r\cdot e(\T) - 2
\\= 2g((\T_r)^{A_r}) + 2g(\T^A) +2r\cdot e(\T) -2
\\= 2g(\G(D_{r},s_r)) + 2g(\G(D,s)) + 2r\cdot e(\G(D,s))-2,
\end{multline*}
as required.
\end{proof}

\end{document}